\newtheorem{theorem}{Theorem}[section]
\newtheorem{lemma}[theorem]{Lemma}
\newtheorem{proposition}[theorem]{Proposition}
\newtheorem{conjecture}[theorem]{Conjecture}
\newtheorem{corollary}[theorem]{Corollary}
\newtheorem{predefinition}[theorem]{Definition}
\newenvironment{definition}{\begin{predefinition}\rm}{\end{predefinition}}
\newtheorem{preremark}[theorem]{Remark}
\newenvironment{remark}{\begin{preremark}\rm}{\end{preremark}}
\newtheorem{prenotation}[theorem]{Notation}
\newenvironment{notation}{\begin{prenotation}\rm}{\end{prenotation}}
\newtheorem{preexample}[theorem]{Example}
\newenvironment{example}{\begin{preexample}\rm}{\end{preexample}}
\newtheorem{preclaim}[theorem]{Claim}
\newtheorem{prequestion}[theorem]{Question}
\newtheorem{preapplication}[theorem]{Application}
\newenvironment{application}{\begin{preapplication}\rm}{\end{preapplication}}
\numberwithin{equation}{section}
\newcommand \ZZ {{\mathbb Z}}
\newcommand \QQ {{\mathbb Q}}
\newcommand \NN {{\mathbb N}}
\newcommand  \FF {{\mathbb F}}
\newcommand \PP {{\mathbb P}^1}
\newcommand \CC {{\mathbb C}}
\newcommand \RR {{\mathbb R}}
\newcommand \hh {{\mathfrak h}}
\newcommand \CJ {{\mathcal J}}
\newcommand \cf {{\mathfrak f}}
\newcommand \CA {{\mathcal A}}
\newcommand \CM {{\mathcal M}}
\newcommand \CT {{\mathcal T}}
\newcommand \Sh {{\rm Sh}}
\newcommand \codim {{\rm codim}}
\newcommand \CX {{\mathcal X}}
\newcommand \co {{\mathfrak o}}
\newcommand \CO{{\mathfrak O}}
\newcommand \p{{\mathfrak p}}
\newcommand \QM {{\QQ[\mu_m]}}
\newcommand \sss {{\rm ss}}
\newcommand \ord {{{\rm ord}}}
\DeclareMathOperator{\GSp}{GSp}
\DeclareMathOperator{\length}{length}
\title{Newton polygon stratification of the Torelli locus in PEL-type Shimura varieties}
\date{}
\author{Wanlin Li}
\address{Department of Mathematics,
University of Wisconsin,
Madison, WI 53706, USA}
\email{wanlin@math.wisc.edu}
\author{Elena Mantovan}
\address{Department of Mathematics, California Institute of Technology, Pasadena, CA 91125, USA}
\email{mantovan@caltech.edu}
\author{Rachel Pries}
\address{Department of Mathematics, 
Colorado State University, 
Fort Collins, CO 80523, USA}
\email{pries@math.colostate.edu}
\author{Yunqing Tang}
\address{Department of Mathematics,
Princeton University,
Princeton, NJ 08540, USA}
\email{yunqingt@math.princeton.edu}
\begin{document}

\maketitle

\begin{abstract}
We study the intersection of the Torelli locus with the Newton polygon stratification of 
the modulo $p$ reduction of certain PEL-type Shimura varieties. 
We develop a clutching method to show that the intersection of the open Torelli locus with 
some Newton polygon strata is non-empty.
This allows us to give a positive answer, under some compatibility conditions, 
to a question of Oort about smooth curves in characteristic $p$ whose Newton polygons are an amalgamate sum.

As an application, we produce infinitely many new examples of Newton polygons that occur for 
smooth curves that are cyclic covers of the projective line. 
Most of these arise in inductive systems which demonstrate unlikely intersections of the 
open Torelli locus with the Newton polygon stratification in Siegel modular varieties.
In addition, for the
twenty special PEL-type Shimura varieties found in Moonen's work,
we prove that all Newton polygon strata intersect the open Torelli locus
(if $p>>0$ in the supersingular cases).

Keywords: curve, cyclic cover, Jacobian, abelian variety, moduli space, Shimura variety, PEL-type, 
reduction, Frobenius, supersingular, Newton polygon, $p$-rank, 
Dieudonn\'e module, $p$-divisible group. 

MSC10 classifications: primary 11G18, 11G20, 11M38, 14G10, 14G35; 
secondary 11G10, 14H10, 14H30, 14H40, 14K10
%
%
%
%
%
%
%
%
%
%
%
%
\end{abstract}

\section{Introduction} \label{Introduction}

\subsection{Overview}

Consider the moduli space ${\mathcal A}_g$ of principally polarized abelian varieties of dimension $g$ 
in characteristic $p > 0$.
It can be stratified by Newton polygon.
In general, it is unknown how these strata intersect the open Torelli locus ${\mathcal T}_g^\circ$, which is the image of the 
moduli space ${\mathcal M}_g$ of smooth genus $g$ curves under the Torelli morphism.
For a symmetric Newton polygon $\nu$ of height $2g$, in most cases it is not known 
whether the stratum ${\mathcal A}_g[\nu]$ intersects ${\mathcal T}_g^\circ$ or, 
equivalently, whether there exists a smooth
curve of genus $g$ in characteristic $p$ whose Jacobian has Newton polygon $\nu$.
This question is answered only when $\nu$ is close to ordinary, meaning that 
the codimension of ${\mathcal A}_g[\nu]$ in ${\mathcal A}_g$ is small.
 
In this paper, we develop a framework to study Newton polygons of Jacobians of 
$\mu_m$-covers of the projective line ${\mathbb P}^1$ for an integer $m \geq 2$.
We study the Newton polygon stratification on the Hurwitz spaces which represent such covers.
Using clutching morphisms, we produce singular curves
with prescribed Newton polygons.
Under an \emph{admissible} condition \Cref{defhp1}, 
these singular curves can be deformed to smooth curves 
which are $\mu_m$-covers of ${\mathbb P}^1$; 
we prove this can be done \emph{without changing the Newton polygon}
under a \emph{balanced} condition \Cref{def_hp2}, or a further \emph{compatible} condition \Cref{def_hp3}.

We then find systems of Hurwitz spaces of $\mu_m$-covers of ${\mathbb P}^1$
for which the admissible, balanced, and compatible conditions,
together with an expected codimension condition on the Newton polygon strata, 
can be verified inductively.
The base cases we use involve cyclic covers of ${\mathbb P}^1$ branched at $3$ points or the 20 special families found by Moonen \cite{moonen}.

As an application, 
we find numerous infinite sequences of unusual Newton polygons which occur for Jacobians of smooth curves.
Most of these arise in an \emph{unlikely intersection} of the open Torelli locus ${\mathcal T}_g^\circ$
with the Newton polygon strata of ${\mathcal A}_g$ \Cref{def_unlikely}.

In essence, 
our strategy is to generalize earlier techniques for 
studying the intersection of the Torelli locus with the Newton polygon stratification 
from the context of the system of moduli spaces ${\mathcal A}_g$ to the context of appropriate
inductive systems of PEL-type Shimura varieties. 
Hurwitz spaces of $\mu_m$-covers of $\PP$ determine unitary Shimura varieties
associated with the group algebra of $\mu_m$, 
as constructed by Deligne--Mostow \cite{delignemostow}.
By work of Kottwitz \cite{kottwitz1,kottwitz2}, Viehmann--Wedhorn \cite{wedhornviehmann}, and Hamacher \cite{hamacher},
the Newton polygon stratification of the modulo $p$ reduction of 
these Shimura varieties is well understood in terms of 
their signature types and the congruence class of $p$ modulo $m$. 
It is difficult to understand the intersection of the Newton polygon stratum $Sh[\nu]$ with the open Torelli locus 
because the codimension of the latter in $Sh$ grows with $g$;
our main results verify 
that this intersection is non-trivial
when $\nu$ is close to $\mu$-ordinary, for infinitely many Shimura varieties $Sh$ of PEL-type.

\subsection{Comparison with earlier results}

In 2005, Oort proposed the following conjecture. 
\begin{conjecture} \label{Coortconj} (\cite[Conjecture 8.5.7]{Oort05})
	For $i=1,2$, let $g_i\in\ZZ_{\ge 1}$ and let $\nu_i$ be a symmetric Newton polygon appearing on ${\mathcal T}_{g_i}^\circ$. Write $g=g_1+g_2$. 
	Let $\nu$ be the amalgamate sum of $\nu_1$ and $\nu_2$. 
	Then $\nu$ appears on ${\mathcal T}_g^\circ$.
\end{conjecture}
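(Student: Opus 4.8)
The plan is a clutching construction followed by a smoothing that preserves the Newton polygon. By hypothesis pick smooth curves $C_1,C_2$ over $\overline{\FF}_p$ with $C_i$ of genus $g_i$ and $\mathrm{Newt}(\mathrm{Jac}(C_i))=\nu_i$. Fix points $x_i\in C_i(\overline{\FF}_p)$, and let $X_0$ be the stable curve of arithmetic genus $g=g_1+g_2$ obtained by gluing $C_1$ to $C_2$ along $x_1\sim x_2$. Its dual graph is a tree, so $X_0$ is of compact type and $\mathrm{Jac}(X_0)=\mathrm{Jac}(C_1)\times\mathrm{Jac}(C_2)$ with the product principal polarization; this is a principally polarized abelian variety of dimension $g$ whose Newton polygon is, by the definition of the amalgamate sum, equal to $\nu$. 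Hence the extension of the Torelli morphism to the compact-type boundary of $\CM_g$ sends $[X_0]$ to a point of $\CA_g[\nu]$, while $[X_0]$ itself lies in $\overline{\CT_g^\circ}\setminus\CT_g^\circ$: it is a limit of Jacobians of smooth curves (because $\CM_g$ is dense in $\overline{\CM_g}$), but as a polarized abelian variety it is decomposable and hence not itself such a Jacobian. So the conjecture is equivalent to the assertion that $X_0$ has a smoothing inside $\CM_g$ whose Newton polygon is still $\nu$, that is, that $[X_0]$ lies in the closure of $\CT_g^\circ\cap\CA_g[\nu]$.

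To look for such a smoothing, work in the versal deformation space $\mathrm{Def}(X_0)$, which for $g\ge 2$ is smooth of dimension $3g-3$. In a small enough neighbourhood $U$ of $[X_0]$ the locus of singular curves is a single smooth divisor $\Delta$ — the locus where the node of $X_0$ is not smoothed — every curve parametrized by $U$ is of compact type, and $U\setminus\Delta$ parametrizes smooth curves; so the Torelli morphism defines a genuine map $\phi\colon U\to\CA_g$ with $\phi([X_0])\in\CA_g[\nu]$. By Grothendieck's semicontinuity theorem the Newton polygon does not become strictly larger than $\nu$ anywhere on $U$. Now there is a dichotomy: either the generic curve of $U$ already has Newton polygon $\nu$, in which case the locus in $U$ where the Newton polygon equals $\nu$ is dense open and therefore meets the smooth locus $U\setminus\Delta$, and we are done; or the generic Newton polygon on $U$ is strictly below $\nu$, and then one must produce a \emph{special} smoothing of $X_0$ whose Newton polygon climbs back up to $\nu$ — equivalently, a tangent direction at $[X_0]$ that both smooths the node and is tangent to a branch of $\CA_g[\nu]$ through $\phi([X_0])$, together with an integrability statement (or a global substitute for it).

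I expect this second alternative to be the genuine obstacle, and the obstruction is structural rather than technical: the Newton stratification of $\CA_g$ need not be well-behaved near the Torelli locus, whose codimension grows with $g$, so there is no general mechanism that controls which smoothings of $X_0$ remain in $\CA_g[\nu]$. The strategy of the present paper is to replace $\CM_g$ by a Hurwitz space of $\mu_m$-covers of $\PP$ and its associated Deligne--Mostow \cite{delignemostow} PEL-type Shimura variety $\Sh$, in which, by work of Kottwitz \cite{kottwitz1,kottwitz2}, Viehmann--Wedhorn \cite{wedhornviehmann}, and Hamacher \cite{hamacher}, the strata $\Sh[\nu]$ are described combinatorially from the signature type and the congruence class of $p$ modulo $m$ and, in favourable cases, are pure of the expected codimension. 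When $\nu_1,\nu_2$ are realized by $\mu_m$-covers whose signature types satisfy the admissibility condition of \Cref{defhp1}, the clutched cover deforms to a smooth $\mu_m$-cover; and if in addition the balanced condition of \Cref{def_hp2}, or the compatibility condition of \Cref{def_hp3}, holds (with $p\gg 0$ in the supersingular cases), this deformation can be carried out inside $\Sh[\nu]$, which settles the conjecture for such $\nu_1,\nu_2$. For arbitrary symmetric $\nu_i$ realized by arbitrary smooth curves the method does not give the control on smoothings needed to resolve the dichotomy, so under that reading the conjecture remains open, and the theorems of this paper should be understood as establishing it under the compatibility hypotheses just described.
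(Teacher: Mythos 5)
The statement you were asked about is Oort's conjecture (\Cref{Coortconj}), which the paper only quotes and does not prove: immediately after stating it, the authors say it is not clear whether it holds in general, and that \Cref{thm_muord} and \Cref{differentm} give an affirmative answer only in many special cases. Your write-up correctly recognizes this and does not claim a proof either. What it does contain --- clutching $C_1$ and $C_2$ at a point to get a compact-type stable curve whose Jacobian is $\mathrm{Jac}(C_1)\times\mathrm{Jac}(C_2)$ with Newton polygon $\nu$, followed by the observation that the whole content of the conjecture is to smooth the node without the Newton polygon moving toward ordinary --- is exactly the paper's starting point (\Cref{Pclutchadd}, \Cref{PNPC3}), and your diagnosis of the obstruction (no general control of the Newton stratification near the Torelli locus, whose codimension grows with $g$) is precisely why the paper retreats to $\mu_m$-covers, where the Deligne--Mostow Shimura variety, the Hamacher/Viehmann--Wedhorn codimension formula \eqref{eqn_codim}, and de Jong--Oort purity supply that control under hypotheses (A), (B), (C).

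Two corrections to your summary of the partial results, and one remark on your dichotomy. Admissibility (\Cref{defhp1}) is a condition on the inertia (local monodromy) at the glued branch points, not on the signature types; and even granting (A), (B), (C), \Cref{differentm} requires $\nu_1$ to be the $\mu$-ordinary polygon $u_1$ of its family, requires the expected-codimension condition \eqref{Ecodimcond} for an irreducible component of $Z_2^\circ[\nu_2]$, and produces the honest amalgamate sum only when $\epsilon=0$ (otherwise extra slopes $0,1$ appear), so the class of pairs $(\nu_1,\nu_2)$ actually covered is narrower than ``compatible signature types.'' Finally, the first branch of your dichotomy is essentially vacuous: the versal deformation space $U$ surjects onto a neighbourhood of $[X_0]$ in $\overline{\CM}_g$, whose generic point is an ordinary smooth curve, so the generic Newton polygon on $U$ is ordinary and you are always in the second branch unless $\nu$ itself is ordinary; this is consistent with, and sharpens, your own conclusion that the second alternative is the real problem and that the conjecture remains open in general.
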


It is not clear whether Oort's conjecture is true in complete generality.  
Theorems \ref{thm_muord} and \ref{differentm} show that Oort's conjecture has an affirmative answer in many cases.

The results in Section \ref{Sclutch} can be viewed as a generalization of Bouw's work 
\cite{Bouwprank} about the intersection of ${\mathcal T}_g^\circ$ with the stratum of maximal $p$-rank in a 
PEL-type Shimura variety.
For most families of $\mu_m$-covers and most congruence classes of $p$ modulo $m$, 
the maximal $p$-rank does not determine the Newton polygon.

Clutching morphisms were also used to study the intersection 
of ${\mathcal T}_g^\circ$ with the $p$-rank stratification of ${\mathcal A}_g$
in Faber and Van der Geer's work \cite[Theorem 2.3]{FVdG}; also \cite{AP:mono}, \cite{glasspries}, \cite{AP:hyp}.

The results in Section \ref{sec_nonord} generalize Pries' work \cite[Theorem 6.4]{priesCurrent}, which states that
if a Newton polygon $\nu$ occurs on ${\mathcal M}_g$ 
with the expected codimension, then the Newton polygon $\nu \oplus (0,1)^n$ occurs on ${\mathcal M}_{g+n}$  
with the expected codimension for $n \in \ZZ_{\geq 1}$. However,  the expected codimension condition is difficult to verify for most Newton polygons $\nu$.

\subsection{Outline of main results}\label{sec_intro_clut}

In Section \ref{Snotation}, we review key background about
Hurwitz spaces, PEL-type Shimura varieties, and Newton polygon stratifications.
In \Cref{AdmissibleClutch}, we analyze the image of a clutching morphism $\kappa$
on a pair of $\mu_m$-covers of ${\mathbb P}^1$.
	
In Section \ref{Sclutch}, we study whether the open Torelli locus ${\mathcal T}_g^\circ$ intersects the $\mu$-ordinary 
Newton polygon stratum, see \Cref{Dmuord}, inside the Shimura variety $S$. The first main result \Cref{thm_muord} 
provides a method to leverage information about this question from lower to higher genus. 
 Under a balanced condition on the signatures \Cref{def_hp2}, we can determine the $\mu$-ordinary Newton polygon as the Shimura variety varies in the clutching system (\Cref{balanced}, which we prove in \Cref{sec_muord}). 

The most ground-breaking results in the paper are in Section \ref{sec_nonord}, where 
we study the intersection of the open Torelli locus ${\mathcal T}_g^\circ$
with the non $\mu$-ordinary Newton polygon strata inside the Shimura variety $S$.
Theorem \ref{differentm} also provides a method
to leverage information from lower to higher genus.  Under an additional 
compatibility condition on the signatures \Cref{def_hp3}, we can control the codimension
of the Newton polygon strata as the Shimura variety varies in the clutching system (\Cref{codim}). 

In Sections \ref{Sinfclut} and \ref{sec_infclut_nu}, we find situations 
where Theorems \ref{thm_muord} and \ref{differentm} can be implemented recursively, 
infinitely many times, which yields smooth curves with arbitrarily large genera and 
prescribed (unusual) Newton polygons. 
We do this by constructing 
suitable \emph{infinite clutching systems} of PEL-type Shimura varieties which satisfy the
admissible, balanced, (and compatible) conditions at every level. 
For example, we prove:

\begin{theorem}[Special case of \Cref{inf-muord}]\label{main-thm-ord}
	Let $\gamma = (m,N,a)$ be a monodromy datum as in \Cref{Dmonodatum}.  
	Let $p$ be a prime such that $p \nmid m$.
	Let $u$ be the $\mu$-ordinary Newton polygon associated to $\gamma$ as in Definition \ref{Dmuord}.
	Suppose there exists a cyclic cover of $\PP$ defined over $\overline{\FF}_p$ with monodromy datum $\gamma$ 
	and Newton polygon $u$.\footnote{See \Cref{muordoccur} for when this condition is satisfied.} 	Then, for any $n\in \ZZ_{\geq 1}$, there exists a smooth curve over $\overline{\mathbb{F}}_p$ with Newton polygon $u^n\oplus (0,1)^{(m-1)(n-1)}$.\footnote{The slopes of this Newton polygon are the slopes of $u$ (with multiplicity scaled by $n$) and $0$ and $1$ each with multiplicity $(m-1)(n-1)$.}
\end{theorem}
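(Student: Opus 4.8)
The plan is to induct on $n$; the base case $n=1$ is precisely the hypothesis. The inductive step is an instance of the leveraging mechanism of \Cref{thm_muord} applied to the clutching morphism $\kappa$ analyzed in \Cref{AdmissibleClutch}, and iterating it amounts to constructing a suitable infinite clutching system as in \Cref{Sinfclut}, so that the statement becomes the special case of \Cref{inf-muord} claimed.

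\emph{Setting up the system.} Put $\gamma^{(1)}=\gamma$ and, for $n\geq 2$, let $\gamma^{(n)}$ be the monodromy datum obtained by amalgamating $\gamma^{(n-1)}$ with $\gamma$ at a point that is unramified for both $\mu_m$-covers of $\PP$, so that $\kappa$ glues the two covers along their degree-$m$ fibers over that point, creating $m$ nodes on the total space. The first task is to verify that this recursively defined system stays admissible (\Cref{defhp1}) and balanced (\Cref{def_hp2}) at every level; I expect this combinatorial bookkeeping to be the main obstacle, since everything downstream rests on it. Granting it, \Cref{balanced} applied inductively identifies the $\mu$-ordinary Newton polygon of $\gamma^{(n)}$: each of the $n$ copies of $\gamma$ contributes the block $u$, while each of the $n-1$ clutchings produces a toric part of rank $(m-2)+1=m-1$ in the semi-abelian Jacobian (dual graph: two vertices joined by $m$ edges), contributing $(0,1)^{m-1}$; hence the $\mu$-ordinary Newton polygon of $\gamma^{(n)}$ is $u^n\oplus(0,1)^{(m-1)(n-1)}$. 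A Riemann--Hurwitz count gives the genus of a $\mu_m$-cover of $\PP$ with datum $\gamma^{(n)}$ as $ng+(n-1)(m-1)$, consistent with these slope multiplicities.

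\emph{The inductive step.} Suppose $C_{n-1}$ is a smooth $\mu_m$-cover of $\PP$ over $\overline{\FF}_p$ with monodromy datum $\gamma^{(n-1)}$ and Newton polygon $u^{n-1}\oplus(0,1)^{(m-1)(n-2)}$ (for $n=2$ this is the given base curve), and let $C$ be the given cover with datum $\gamma$ and Newton polygon $u$. Applying $\kappa$ at points unramified in both covers produces a singular $\mu_m$-cover $X\to\PP\cup\PP$ with $m$ nodes which, by admissibility, lies in the boundary of the Hurwitz space attached to $\gamma^{(n)}$ and is a limit of smooth $\mu_m$-covers of $\PP$ with that datum. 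The Newton polygon of $X$ is that of its semi-abelian Jacobian, which decomposes as $\mathrm{NP}(\mathrm{Jac}(C_{n-1}))\oplus\mathrm{NP}(\mathrm{Jac}(C))\oplus(0,1)^{m-1}=u^n\oplus(0,1)^{(m-1)(n-1)}$; by the previous paragraph this is exactly the $\mu$-ordinary Newton polygon of $\gamma^{(n)}$.

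\emph{Deforming to a smooth curve.} By the Newton stratification theory of Kottwitz \cite{kottwitz1,kottwitz2}, Viehmann--Wedhorn \cite{wedhornviehmann}, and Hamacher \cite{hamacher}, the $\mu$-ordinary Newton polygon stratum in the reduction of the PEL-type Shimura variety attached to $\gamma^{(n)}$ is open; its preimage is therefore open in the Hurwitz space $\mathcal{H}_{\gamma^{(n)}}$ and in the boundary closure containing $X$. Since $X$ already lies in this stratum and, by admissibility, is a limit of smooth $\mu_m$-covers of $\PP$ with datum $\gamma^{(n)}$, some such smooth cover $C_n$ lies in the $\mu$-ordinary stratum, hence has Newton polygon $u^n\oplus(0,1)^{(m-1)(n-1)}$. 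This $C_n$ is a smooth curve of the required type, completing the induction. Note that the deformation step is soft once openness of the $\mu$-ordinary locus is in hand; the genuinely delicate point is, as flagged above, the recursive verification that the amalgamated data $\gamma^{(n)}$ remain admissible and balanced, which is where the real content of \Cref{inf-muord} lies.
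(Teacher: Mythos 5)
Your proposal follows essentially the same route as the paper: there the result is the $t=m$ case of \Cref{inf-muord}, which marks two unramified fibers on $\gamma$ (\Cref{Rmodifyslightly}, \Cref{plus1}) and self-clutches via \Cref{infinite-ord} with $r=\gcd(0,m)=m$, so that each clutching contributes defect $\epsilon=r-1=m-1$; the $\mu$-ordinary polygon at each level is identified by \Cref{balanced}, and the passage from the clutched stable curve to a smooth one is \Cref{muordsmooth} (lower semicontinuity plus irreducibility of the Hurwitz space and density of the smooth locus) — exactly your three steps.

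The one piece you defer, the recursive verification of hypotheses (A) and (B), is not in fact the main obstacle; it is immediate. Since the glue points are unramified, the relevant inertia entries are $a_1(N_1)=a_2(1)=0$ and hypothesis (A) holds trivially at every level. For hypothesis (B), note $m_1=m_2=m$, $d_1=d_2=1$, $r_0=m$, so \Cref{Dsig3} gives $\cf^{(n)}=\cf^{(n-1)}+\cf+1$ on $\CT\setminus\{\tau_0\}$, hence $\cf^{(n)}=n\cf+(n-1)$ there; the order relations among the values of $\cf^{(n-1)}$ on any orbit therefore coincide with those of $\cf$, and (B) for the pair $(\gamma^{(n-1)},\gamma)$ follows. (Equivalently, invoke \Cref{rmk_hp2}(2) for the base pair $(\gamma,\gamma)$ and \Cref{rmk_hp2}(4) to propagate balancedness through the clutching.) With that observation supplied, your argument is complete and matches the paper's.
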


For a symmetric Newton polygon $\nu$ of height $2g$, 
we say that the open Torelli locus has an \emph{unlikely intersection} with the Newton polygon stratum 
${\mathcal A}_g[\nu]$ in ${\mathcal A}_g$ if there exists a smooth curve of genus $g$ with Newton polygon $\nu$
and if ${\rm dim}({\mathcal M}_g) < {\rm codim}({\mathcal A}_g[\nu], {\mathcal A}_g)$, \Cref{def_unlikely}.
In Section \ref{unlikely}, we study 
the asymptotic of ${\rm codim}({\mathcal A}_g[\nu], {\mathcal A}_g)$ for the Newton polygons 
$\nu$ appearing in Sections \ref{Sinfclut} and \ref{sec_infclut_nu}, and 
verify that most of our inductive systems
produce unlikely intersections once $g$ is sufficiently large, for most congruence classes of $p$ modulo $m$.

\subsection{Applications}

In \Cref{MSS} in \Cref{MoonenSS}, we prove that all the
Newton polygons for the Shimura varieties associated to the families in \cite[Table 1]{moonen} 
occur for smooth curves in the family.

In \Cref{Applications1}, we construct explicit infinite sequences of 
Newton polygons that occur at odd primes for smooth curves which demonstrate unlikely intersections. 
For example, by \Cref{main-thm-ord} applied to $\gamma=(m,3, (1,1,m-2))$, we prove:  

\begin{application} \label{Agenp} (\Cref{3gn-2g}) 
	Let $m \in \ZZ_{> 1}$ be odd and $h=(m-1)/2$.  
	Let $p$ be a prime, $p \nmid 2m$, such that the order $f$ of $p$ in $(\ZZ/m\ZZ)^*$ is even and $p^{f/2}\equiv -1 \bmod m$.
	For $n \in \ZZ_{\ge 1}$,
	there exists a $\mu_m$-cover $C \to {\mathbb P}^1$ defined over $\overline{\mathbb{F}}_p$
		where $C$ is a smooth curve of genus $g=h(3n-2)$ 
		with Newton polygon $\nu=(1/2, 1/2)^{h n}\oplus (0,1)^{2h(n-1)}$.\footnote{Its slopes are 
		$1/2$ with multiplicity $2hn$ and $0$ and $1$ each with multiplicity $2h(n-1)$.} 
		If $n \ge 34/h$, then ${\rm Jac}(C)$ lies in the unlikely intersection ${\mathcal T}^\circ_g \cap 
		{\mathcal A}_{g}[\nu]$.  
\end{application}

In \Cref{2/3ForAllg}, we apply \Cref{Agenp} when $m=3$ to verify, 
for $p \equiv 2 \bmod 3$ and $g \in \ZZ_{\geq 1}$, 
there exists a smooth curve of genus $g$ defined over $\overline{\FF}_p$
whose Newton polygon only has slopes $\{0, 1/2, 1\}$ and the multiplicity of slope 
$1/2$ is at least $2\lfloor g/3 \rfloor$.
To our knowledge, this is the first time for any odd prime $p$ that 
a sequence of smooth curves has been produced for every $g \in \ZZ_{\geq 1}$ such that the multiplicity of the 
slope $1/2$ in the Newton polygon grows linearly in $g$.

\subsection*{Acknowledgements} 
Pries was partially supported by NSF grant DMS-15-02227. Tang is partially supported by NSF grant DMS-1801237.
We thank the Banff International Research Station for hosting \emph{Women in Numbers 4},
the American Institute of Mathematics for supporting our square proposal, and
an anonymous referee for valuable suggestions.


\section{Notations and Preliminaries} \label{Snotation}

More details on this section can be found in \cite[Sections 2,3]{LMPT2} and 
\cite[Sections 2,3]{moonen}.

\subsection{The group algebra of $m$-th roots of unity}
Let $m,d \in \ZZ_{\geq 1}$.
Let $\mu_m:=\mu_m(\CC)$ denote the group of $m$-th roots of unity in $\CC$.
Let $K_d$ be the $d$-th cyclotomic field over $\QQ$.
Let $\QM$ denote the group algebra of $\mu_m$ over $\QQ$. Then $\QM=\prod_{d|m}K_d$. We endow $\QM$ with the involution $*$ 
induced by the inverse map on $\mu_m$, i.e., $\zeta^*:= \zeta^{-1}$ for all $\zeta\in\mu_m$.  

Set $\CT:=\hom_\QQ (\QM,\CC)$.  If $W$ is a $\QM\otimes_\QQ\CC$-module, we write $W=\oplus_{\tau\in\CT} W_\tau$, where $W_\tau$ denotes the subspace of $W$ on which $a\otimes 1\in\QM\otimes_\QQ\CC$ acts as $\tau(a)$. We fix an identification $\CT=\ZZ/m\ZZ$ by defining, for all $n\in \ZZ/m\ZZ$,
\[\tau_n(\zeta):=\zeta^n, \text{ for all }\zeta\in \mu_m.\]

Let $m\geq 1$.  For $p\nmid m$,
we identify $\CT=\hom_\QQ(\QM, \overline{\QQ}_p^{\rm un})$, where $\overline{\QQ}_p^{\rm un}$ is the maximal unramified extension of $\QQ_p$ in an algebraic closure. There is a natural action of the Frobenius $\sigma$ on $\CT$, 
defined by $\tau\mapsto \tau^\sigma:=\sigma\circ \tau$. 
Let $\CO$ be the set of all $\sigma$-orbits $\co$ in $\CT$.  

\subsection{Newton polygons}
Let $X$ be an abelian scheme 
defined over the algebraic closure $\FF$ of $\FF_p$.  
Then there is a finite field $\FF_0/\FF_p$, 
an abelian scheme $X_0/\FF_0$, and $\ell \in \ZZ_{\geq 1}$, 
such that $X \simeq X_0 \times_{\FF_0} \FF$ and the action of $\sigma^\ell$  on $H^1_{\rm cris}(X_0/W(\FF_0))$ is linear; here $W(\FF_0)$ denotes the Witt vector ring of $\FF_0$. 
The \emph{Newton polygon} $\nu(X)$ of $X$ is the multi-set of rational numbers $\lambda$
such that $\ell \lambda$ are the valuations at $p$ of the eigenvalues of   $\sigma^\ell$  acting on $H^1_{\rm cris}(X_0/W(\FF_0))$; 
the Newton polygon does not depend on the choice of $(\FF_0, X_0,\ell)$.

The {\em $p$-rank} of $X$ is the multiplicity of the slope $0$ in $\nu(X)$; it equals $\dim_{\FF_p}(\hom(\mu_p,X))$.

\label{NPsum}
If $\nu_1$ and $\nu_2$ are two Newton polygons, the {\em amalgamate sum} $\nu_1\oplus \nu_2$ is 
the disjoint union of the multi-sets $\nu_1$ and $\nu_2$.
	We denote by $\nu^d$  the amalgamate sum of $d$ copies of $\nu$.
	
The Newton polygon $\nu(X)$ is typically drawn as a lower convex polygon, with slopes $\lambda$ occurring with multiplicity $m_\lambda$, where $m_\lambda$ denotes the multiplicity of $\lambda$ in the multi-set.
The Newton polygon of a $g$-dimensional abelian variety is symmetric, 
with endpoints $(0,0)$ and $(2g,g)$, integral break points, and slopes in $\QQ\cap [0,1]$. 
For convex polygons, 
we write $\nu_1\geq \nu_2$ if $\nu_1,\nu_2$ share the same endpoints and $\nu_1$ lies below $\nu_2$. 

We denote by $\ord$ the Newton polygon  $(0,1)$ and by $\sss$ the Newton polygon $(1/2,1/2)$. 
For $s,t\in \ZZ_{\geq 1}$, with $s\leq t/2$ and ${\rm gcd}(s,t)=1$, 
we write $(s/t, (t-s)/t)$ for the Newton polygon with slopes $s/t$ and  $(t-s)/t$, each with multiplicity $t$. 

Suppose $Y$ is a semi-abelian scheme defined over $\FF$. 
Then $Y$ is an extension of an abelian scheme $X$ by a torus $T$; its Newton polygon is  
$\nu(Y):=\nu(X)\oplus\ord^\epsilon$, where $\epsilon={\rm dim}(T)$.

\subsection{Cyclic covers of the projective line}\label{ZmNa}

\begin{definition} \label{Dmonodatum}
	Fix integers $m\geq 2$, $N\geq 3$ and an $N$-tuple of integers $a=(a(1),\dots, a(N))$. 
	Then $a$ is an {\em inertia type} for $m$ and  
	$\gamma=(m,N,a)$ is a {\em  monodromy datum} if
	\begin{enumerate}
		\item $a(i)\not\equiv 0 \bmod m$, for each $i=1, \dots, N$, 
		\item $\gcd(m, a(1),\dots, a(N))=1$, 
		\item $\sum_ia(i)\equiv 0 \mod m$.
	\end{enumerate}
\end{definition}

For later applications, we sometimes consider a {\em generalized monodromy datum}, 
in which we allow $a(i) \equiv 0 \bmod m$.  In the case that $a(i)=0$, we set ${\rm gcd}(a(i), m) = m$.

Two monodromy data $(m,N,a)$ and $(m',N',a')$ are {\em equivalent} 
if $m=m'$, $N=N'$, and the images of $a,a'$ in $(\ZZ/m\ZZ)^N$ 
are in the same orbit under $(\ZZ/m\ZZ)^*\times {\rm Sym}_N$. 

For fixed $m$, we work over an irreducible scheme over $\ZZ[1/m, \zeta_m]$.
Let $U\subset ({\mathbb A}^1)^N$ be the complement of the weak diagonal.
Consider the smooth projective curve $C$ over $U$ whose fiber at each point $t=(t(1),\dots, t(N))\in U$ has affine model 
\begin{equation} \label{EformulaC}
y^m=\prod_{i=1}^N(x-t(i))^{a(i)}.
\end{equation}
Consider the $\mu_m$-cover $\phi: C \to {\mathbb P}^1_U$ defined by the function $x$ and the
$\mu_m$-action $\iota: \mu_m \to {\rm Aut}(C)$ given by 
$\iota(\zeta) \cdot (x,y)=(x,\zeta\cdot y)$ for all $\zeta\in\mu_m$.

For a closed point $t\in U$, the cover $\phi_t: C_t\to \PP$ is a $\mu_m$-cover, 
branched at $N$ points $t(1),\dots , t(N)$ in $\PP$, and 
with local monodromy $a(i)$ at $t(i)$.
By the hypotheses on the monodromy datum, $C_t$ is a geometrically irreducible curve of genus $g$, where
\begin{equation} \label{Egenus}
g=g(m,N,a)=1+\frac{1}{2}\Big((N-2)m-\sum_{i=1}^N\gcd(a(i),m)\Big).
\end{equation}

Take $W=H^0(C_t, \Omega^1)$ and,
under the identification $\CT=\ZZ/m\ZZ$, let $\cf(\tau_n) = {\rm dim}(W_{\tau_n})$.
The \emph{signature type} of $\phi$ is defined as $\cf=(\cf(\tau_1), \ldots, \cf(\tau_{m-1}))$.
By \cite[Lemma 2.7, \S3.2]{moonen}, 
\begin{equation}\label{DMeqn}
\cf(\tau_n)=\begin{cases} -1+\sum_{i=1}^N\langle\frac{-na(i)}{m}\rangle & \text{ if $n\not\equiv 0 \bmod m$}\\
0 & \text{ if $n\equiv 0 \bmod m$}.\end{cases}
\end{equation}
where, for any $x\in \RR$, $\langle x\rangle$ denotes the fractional part of $x$.
The signature type of $\phi$ does not depend on $t$; it determines and is uniquely determined by the inertia type.

\subsection{Hurwitz spaces}

Let ${\mathcal M}_g$ be the moduli space of smooth curves of genus $g$ in characteristic $p$.
Its Deligne--Mumford compactification $\overline{\mathcal M}_g$ is the moduli space of stable curves of genus $g$.
For a Newton polygon $\nu$, let ${\mathcal M}_g[\nu]$
be the subspace whose points represent objects with Newton polygon $\nu$. 
We use analogous notation for other moduli spaces.

We refer to \cite[Sections 2.1-2.2]{AP:trielliptic} for a more complete description of 
Hurwitz spaces for cyclic covers of ${\mathbb P}^1$.
Consider the moduli functor $\overline{\CM}_{\mu_m}$ (resp.\ $\tilde{\CM}_{\mu_m}$)
on the category of schemes over $\ZZ[1/m, \zeta_m]$; 
its points represent admissible stable $\mu_m$-covers $(C/U, \iota)$ of a genus $0$ curve
(resp.\ together with an ordering of the smooth branch points and 
the choice of one ramified point above each of these). 
We use a superscript $\circ$ to denote the subspace of points for which $C$ is smooth.
By \cite[Lemma 2.2]{AP:trielliptic}, $\overline{\CM}_{\mu_m}$ (resp.\ $\tilde{\CM}_{\mu_m}$) 
is a smooth proper Deligne--Mumford stack and $\overline{\CM}^\circ_{\mu_m}$ 
(resp.\ $\tilde{\CM}^\circ_{\mu_m}$) is open and dense within it.

For each irreducible component of $\tilde{\CM}_{\mu_m}$, 
the monodromy datum $\gamma=(m,N, a)$ 
of the $\mu_m$-cover $(C/U, \iota)$ is constant.  Conversely, the substack $\tilde{\CM}^\gamma_{\mu_m}$ 
of points representing $\mu_m$-covers with monodromy datum $\gamma$ is irreducible, 
\cite[Corollary 7.5]{fultonhur}, \cite[Corollary 4.2.3]{wewersthesis}.

On $\overline{\CM}_{\mu_m}$, there is no ordering of the ramification points; 
so only the unordered multi-set $\overline{a}= \{a(1), \ldots, a(N)\}$ is well-defined. 
The components of $\overline{\CM}_{\mu_m}$ are indexed by $\overline{\gamma}=(m, N, \bar{a})$.
By \cite[Lemma 2.4]{AP:trielliptic}, the forgetful morphism 
$\tilde{\CM}^\gamma_{\mu_m} \to \overline{\CM}_{\mu_m}^{\bar{\gamma}}$
is \'etale and Galois.

\begin{definition} \label{DzmNa}\footnote{This definition is slightly different from the one in our previous papers \cite{LMPT}, \cite{LMPT2}.}
	If $\gamma= (m,N,a)$ is a monodromy datum, let $\tilde{Z}(\gamma)=\tilde{\mathcal M}_{\mu_m}^\gamma$ and let 
	$\overline{Z}(\gamma)$ 
	be the reduced image of $\tilde{\CM}^\gamma_{\mu_m}$ in $\overline{\CM}_g$.
	We denote the subspace representing objects where $C/U$ is smooth 
	(resp.\ of compact type\footnote{A stable curve has \emph{compact type} if its dual graph is a tree.
The Jacobian of a stable curve $C$ is a semi-abelian variety; 
also $C$ has compact type if and only if ${\rm Jac}(C)$ is an abelian variety.}, resp.\ stable) by
	\[Z^\circ(\gamma) \subset Z(\gamma) \subset \overline{Z}(\gamma)
		\text{ and }
 	\tilde{Z}^\circ(\gamma)\subset  \tilde{Z}^c(\gamma)\subset \tilde{Z}(\gamma).\] 
\end{definition}

		By definition, $\overline{Z}(\gamma)$ is a reduced irreducible proper substack of $\overline{\CM}_g$.
	It depends uniquely on the equivalence class of $\gamma$. 	
	The forgetful morphism $\tilde{Z}(\gamma)\rightarrow \overline{Z}(\gamma)$ is finite and hence it preserves the dimension of any substack.

\begin{remark}\label{plus1}
Let $\gamma'=(m,N',a')$ be a generalized monodromy datum. 
Assume that $a'(N') \equiv 0 \bmod m$ and $a'(i) \not \equiv 0 \bmod m$ for $1 \leq i < N'$.
Consider the monodromy datum $\gamma=(m, N'-1,a)$, where $a(i)=a'(i)$ for $1\leq i \leq N'-1$.
Then $\overline{Z}(\gamma')=\overline{Z}(\gamma)$ and $\tilde{Z}(\gamma')= \tilde{Z}(\gamma)_1$, 
where the subscript $1$ indicates that the data includes one marked point on the curve.
The fibers of the morphism $\tilde{Z}^\circ(\gamma')\rightarrow {Z}^\circ(\gamma')$ are of pure dimension $1$.
\end{remark}

\subsection{Shimura varieties associated to monodromy data} \label{shimura}
Consider $V:=\QQ^{2g}$ endowed with the standard symplectic form $\Psi:V\times V\to\QQ$  and $G:={\rm GSp}(V,\Psi)$, the group of symplectic similitudes. 
Let $(G_\QQ,\hh)$ be the Siegel Shimura datum.

Fix $x\in Z(\gamma)(\CC)$ and let $(\CJ_x,\theta)$ denote the Jacobian of the curve represented by $x$ together with its principal polarization $\theta$.  Choose a symplectic similitude 
\[\alpha: (H_1(\CJ_x,\ZZ), \psi_\theta)\to (V ,\Psi)\]
where $\psi_\theta$ denotes the Riemannian form on $H_1(\CJ_x,\QQ)$ corresponding to $\theta$.
Via $\alpha$, the $\QQ[\mu_m]$-action on $\CJ_x$ induces a $\QM$-module structure on $V$, and the Hodge decomposition of $H_1(\CJ_x,\CC)$ induces
a $\QM\otimes_\QQ\CC$-linear decomposition 
$V_\CC=V^+\oplus V^-$.

We recall the PEL-type Shimura stack $\Sh(\mu_m,\cf)$ given in \cite{delignemostow}.
The Shimura datum of $\Sh(\mu_m,\cf)$ given by $(H,\hh_\cf)$ is defined as 
\[H:=GL_\QM(V)\cap \GSp(V,\Psi),\] and $\hh_{\cf}$ the $H$-orbit in $\{h\in \hh \mid h \text{ factors through }H\}$ determined by the isomorphism class of the $\QM\otimes_\QQ\CC$-module $V^+$,
i.e., by the integers
$\cf(\tau):=\dim_\CC(V^+_\tau)$, for all $\tau\in\CT$.
Under the identification $\CT=\ZZ/m\ZZ$, 
the formula for $\cf(\tau_n)$ is that given in \eqref{DMeqn}. 

For a Shimura variety $\Sh:=\Sh(H,\hh_\cf)$ of PEL type, we use $\Sh^*$ to denote the Baily-Borel (i.e., minimal) compactification and $\overline{\Sh}$ to denote a toroidal compactification (see \cite{Lan}).

The Torelli morphism $T: {\mathcal M}^c_g \to {\mathcal A}_g$
takes a curve of compact type to its Jacobian.

\begin{definition} \label{Dspecial}
	We say that $Z(\gamma)$ 
	is {\it special} if $T(Z(\gamma))$ is open and closed in the PEL-type Shimura stack $\Sh(\mu_m,\cf)$ given in \cite{delignemostow} (see \cite[Section 3.3]{LMPT2} for details).
\end{definition}

	If $N=3$, then $T(Z(\gamma))$ is a point of $\CA_g$ representing an abelian variety with complex multiplication and 
	is thus special, \cite[Lemma 3.1]{LMPT}.
By \cite[Theorem 3.6]{moonen},
if $N\geq 4$, then $Z(\gamma)$ is special 
	if and only if $\gamma$ is equivalent to one of twenty examples in \cite[Table 1]{moonen}.

\subsection{The Kottwitz set and the $\mu$-ordinary Newton polygon} \label{Skottwitz}

Let $p\nmid m$ be a rational prime. Then the Shimura datum $(H,\hh_\cf)$ is unramified at $p$. We write  $H_{\QQ_p}$ for the fiber of $H$ at $p$, and $\mu_\hh$ for the conjugacy class of $p$-adic cocharacters $\mu_h$ associated with $h\in\hh_\cf$. 

Following \cite{kottwitz1}-\cite{kottwitz2}, we denote by $B(H_{\QQ_p},\mu_\hh)$ the partially ordered 
set of $\mu$-admissible $H_{\QQ_p}$-isocrystal structures on $V_{\QQ_p}$. 
By \cite[Theorem 1.6]{wedhornviehmann} (see also \cite{wedhorn}), $B(H_{\QQ_p},\mu_\hh)$ 
can be canonically identified with the set of Newton polygons appearing on $\Sh(H,\hh)$.\footnote{The term Newton polygon usually refers to the image of an element in $B$ via the Newton map. Here, since we work with PEL-type Shimura varieties of type A and C, the Newton map is injective and hence we do not distinguish between the elements in $B$ and the corresponding Newton polygons.} We sometimes write  $\Sh:=\Sh(H,\hh)$ and 
$B=B(\Sh):=B(H_{\QQ_p},\mu_\hh)$. 

\begin{definition} \label{Dmuord}
	The {\em $\mu$-ordinary} Newton polygon $u:=u_{\mu-ord}$ is
	 the unique maximal element (lowest Newton polygon) of $B(H_{\QQ_p},\mu_\hh)$.
\end{definition}

An explicit formula for $u$ is given below; (see also \cite[Section 4.1-4.2]{LMPT2}).

\subsubsection{Formula for slopes and multiplicities} \label{Sformulau}
	Let $\cf$ be a signature type. Fix an orbit $\co$ in $\CT$.
	We recall the formulas from \cite[Section 1.2.5]{moonen} for the slopes and multiplicities of the $\co$-component $u(\co)$ of the $\mu$-ordinary Newton polygon in terms of $\cf$, following the notation in 
	\cite[Section 2.8]{eischenmantovan}, \cite[Section 4.2]{LMPT2}. 
	
	With some abuse of notation, we replace $\CT$ by $\CT-\{\tau_0\}$ and $\CO$ by $\CO-\{\{\tau_0\}\}$.
Let $g(\tau):=\dim_\CC(V_\tau)$.
As the integer $g(\tau)$ depends only on the order of $\tau$ in the additive group $\ZZ/m\ZZ$, and thus only on the orbit $\co$ of $\tau$, we sometimes write $g(\co)=g(\tau)$, for any/all $\tau\in\co$. 

\begin{remark} \label{Rdual}
For all $\tau\in \CT$, $\dim_\CC(V^+_{\tau^*})=\dim_\CC(V^-_{\tau})$, and thus $\cf(\tau)+\cf(\tau^*)=g(\tau)$.  \end{remark}

	Let $s=s(\co)$ be the number of distinct values of 
	$\{\cf(\tau) \mid \tau\in\co\}$ in the range $[1,g(\co)-1]$.  
	We write these distinct values as
	\[g(\co)> E(1)> E(2)>\cdots >E(s)>0.\] 
	Let $E(0):=g(\co)$ and $E({s+1}):=0$.
	Then $u(\co)$ 
	has exactly $s+1$ distinct slopes, denoted by 
	$0\leq \lambda(0) < \lambda(1) <\cdots <\lambda(s)\leq 1$.
	For $0\leq t\leq s$, the $(t+1)$-st slope is
	\begin{equation}\label{eq_slopes}
	\lambda(t):=\frac{1}{|\co|} \#\{\tau \mid \cf_i(\tau) \ge E(t)\}.
	\end{equation}
	The  slope $\lambda(t)$
	occurs in $u(\co)$ with multiplicity 
	\begin{equation}\label{eq_multi}
	\rho(\lambda(t)):=|\co|(E(t)-E(t+1)).
	\end{equation} 
	
\subsection{Geometry of the Newton polygon strata on $\Sh$}
	
For $b\in B$, let
$\Sh[b]:=\Sh(H,\hh)[b]$ denote the corresponding Newton polygon stratum in $\Sh$. 
In other words, $\Sh[b]$ is the locally closed substack of $\Sh$ parametrizing abelian schemes with Newton polygon $b$. 
By Hamacher \cite[Theorem 1.1, Corollary 3.12]{hamacher}, based on the work of Chai \cite{Chai}, Mantovan \cite{Mantovan}, and Viehmann \cite{Viehmann}, and Kottwitz \cite[Section 8]{kottwitz92},\footnote{Hamacher proved that $Sh[b]$ is non-empty and equidimensional of expected dimension. Since Hecke translations preserve the Newton polygon strata and act transitively on the irreducible components of $Sh$, we deduce the same result for $S[b]$. See 
\cite[\S 8]{kottwitz92} for a more detailed discussion.} on each irreducible component $S$ of $\Sh$, the substack $S[b]$ is non-empty and equidimensional and
\begin{eqnarray}\label{eqn_codim}
\codim (S[b],\Sh)= {\rm length}(b),
\end{eqnarray}
where ${\rm length}(b) = {\rm max}\{n \mid \ {\rm there} \ {\rm exists} \ {\rm a} \ {\rm chain} \ 
b=\nu_0 < \nu_1 < \cdots < \nu_n=u, \  \nu_i \in B\}$. 

The Newton stratification extends to the toroidal and minimal compactifications $\overline{\Sh}, \Sh^*$.  In \cite[\S 3.3]{Lan-Stroh}, the authors studied the Newton stratification on compactifications of PEL-type Shimura varieties at good primes. They proved in this case
that all the Newton strata are so called \emph{well-positioned} subschemes \cite[Proposition 3.3.9]{Lan-Stroh}. 
In particular, by 
\cite[Definition 2.2.1, Theorem 2.3.2]{Lan-Stroh}, the set of Newton polygons on (each irreducible component of) $\overline{\Sh}$ is the same as that on $\Sh$ and, for any $b \in B$,
\begin{eqnarray}\label{eqn_codim_cpt}
\codim (\overline{\Sh}[b],\overline{\Sh}) =\codim(\Sh[b],\Sh).
\end{eqnarray}

By the next remark, there exists a $\mu_m$-cover of smooth curves
having monodromy datum $\gamma$ and $\mu$-ordinary Newton polygon $u$ if there exists such a cover of stable curves.

\begin{lemma}\label{muordsmooth}
	The following are equivalent:
	$Z^\circ(\gamma)[u]$ is non-empty;
	$Z^\circ(\gamma)[u]$ is open and dense in $Z(\gamma)$; 
 and $Z(\gamma)[u]$ is non-empty.	
\end{lemma}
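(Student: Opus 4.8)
The plan is to prove the three statements are equivalent by establishing a cycle of implications, using the irreducibility of $Z(\gamma)$ together with the fact that on a (proper, smooth) Deligne--Mumford stack the $\mu$-ordinary locus behaves like a generic, open condition.

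\textbf{Step 1: ``$Z(\gamma)[u]$ is non-empty'' implies ``$Z^\circ(\gamma)[u]$ is open and dense in $Z(\gamma)$''.} The key point is that $u$ is the \emph{lowest} Newton polygon in $B=B(\Sh(\mu_m,\cf))$ by \Cref{Dmuord}, so $Z(\gamma)[u]$ is the \emph{maximal} (generic) stratum among the Newton strata that actually occur on $Z(\gamma)$. Concretely: the function sending a point to the Newton polygon of the associated abelian variety (the Jacobian, for a compact-type curve) is upper semicontinuous, so $\{x : \nu(x) \geq u\}$ is open in $Z(\gamma)$; since $u$ is minimal in $B$ this set equals $Z(\gamma)[u]$, which is therefore open. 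If it is non-empty, then because $Z(\gamma)$ is irreducible (as recalled after \Cref{DzmNa}, coming from \cite[Corollary 7.5]{fultonhur}, \cite[Corollary 4.2.3]{wewersthesis}), any non-empty open subset is dense. It remains to pass from $Z(\gamma)[u]$ to $Z^\circ(\gamma)[u]$: since $Z^\circ(\gamma)$ is open and dense in $Z(\gamma)$ (the smooth locus of the compactified Hurwitz space, by \cite[Lemma 2.2]{AP:trielliptic}), the intersection $Z^\circ(\gamma)[u] = Z^\circ(\gamma) \cap Z(\gamma)[u]$ of two dense opens is again open and dense.

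\textbf{Step 2: ``$Z^\circ(\gamma)[u]$ is open and dense'' implies ``$Z^\circ(\gamma)[u]$ is non-empty''.} Immediate, since $Z(\gamma)$ is non-empty.

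\textbf{Step 3: ``$Z^\circ(\gamma)[u]$ is non-empty'' implies ``$Z(\gamma)[u]$ is non-empty''.} Also immediate, as $Z^\circ(\gamma) \subseteq Z(\gamma)$ and $Z^\circ(\gamma)[u] \subseteq Z(\gamma)[u]$.

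Combining Steps 1--3 gives the cycle, hence the equivalence, and in fact shows the slightly stronger statement that each of the three equivalent conditions is equivalent to ``$Z^\circ(\gamma)[u]$ is open and dense in $Z(\gamma)$.'' The part requiring the most care is Step 1: one must make sure that $u$, defined via the Kottwitz set $B(H_{\QQ_p},\mu_\hh)$ attached to the Shimura datum, really is $\leq$ every Newton polygon \emph{occurring on $Z(\gamma)$} (not merely on the ambient $\Sh$). This is fine whether or not $Z(\gamma)$ is special: if $T(Z(\gamma))$ lands in $\Sh(\mu_m,\cf)$ then every Newton polygon on $Z(\gamma)$ lies in $B$, and $u$ is the minimum of $B$; if $Z(\gamma)$ is a proper subvariety, the same inequality $\nu(x) \geq u$ holds pointwise because the Newton polygon of any abelian variety with the given $\QM$-action and polarization type is $\mu$-admissible. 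One should also note that the semicontinuity/openness and density arguments are taking place over $\overline{\FF}_p$ (so one works with the reduction $Z(\gamma)\otimes\overline{\FF}_p$, which is still irreducible since $Z(\gamma)$ is smooth and irreducible over $\ZZ[1/m,\zeta_m]$ with geometrically irreducible fibers). No deeper input — such as the dimension formula \eqref{eqn_codim} or Hamacher's theorem — is needed here; the statement is really just irreducibility plus minimality of $u$ plus upper semicontinuity of Newton polygons.
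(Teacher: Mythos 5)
Your proof is correct and is essentially the paper's argument, just written out in full: the paper's one-line proof cites exactly the three ingredients you use (semicontinuity of the Newton polygon, irreducibility of $Z(\gamma)$, and density of $Z^\circ(\gamma)$ in $Z(\gamma)$), and your cycle of implications is the natural expansion of that. The only cosmetic quibbles are that the paper calls the relevant semicontinuity ``lower'' (a convention difference, both meaning Grothendieck specialization) and that $Z^\circ(\gamma)$ is the locus of points representing smooth curves rather than the smooth locus of the stack; neither affects the argument.
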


\begin{proof}
This is clear because the Newton polygon is lower semi-continuous, $Z(\gamma)$ is irreducible, and  
	$Z^\circ(\gamma)$ is open and dense in $Z(\gamma)$.
\end{proof}

\begin{remark}
	The Ekedahl--Oort type is also determined for many of the smooth curves in this paper.  
	The reason is that the $\mu$-ordinary Newton polygon stratum in these PEL-type Shimura varieties coincides with the unique open Ekedahl--Oort stratum.
	Hence one may compute the Ekedahl--Oort type of these smooth curves using \cite[Section 1.2.3]{Moonen04}. 
\end{remark}


\section{Clutching morphisms}\label{AdmissibleClutch}

\subsection{Background} \label{Sindprec}

We study a generalization of the clutching morphisms, 
for curves that are $\mu_m$-covers of $\PP$.
The clutching morphisms are the closed immersions \cite[3.9]{knudsen2}: 
\[\kappa_{i,g-i}: \overline{\CM}_{i;1} \times \overline{\CM}_{g-i;1} \to \overline{\CM}_{g} \ {\rm and} \  
\lambda: \overline{\CM}_{g-1;2} \to \overline{\CM}_g,\]
for $1 \leq i \leq g-1$.
The image of $\kappa_{i,g-i}$ is the component $\Delta_{i}$ of the boundary of $\CM_g$, 
whose generic point represents a stable curve that has two components, of genus $i$ and $g-i$, intersecting in one point.
The image of $\lambda$ is the component $\Delta_0$ of the boundary of $\CM_g$, whose points represent 
stable curves that do not have compact type.

Given a pair of cyclic covers of ${\mathbb P}^1$, we analyze the image of a clutching morphism $\kappa$, which 
shares attributes of both $\kappa_{i, g-i}$ and $\lambda$.
	To provide greater flexibility, we include cases when the covers have different degrees
	or when the two covers are clutched together at several points.  
	As a result, a curve in the image of $\kappa$ may not have compact type.

\begin{notation} \label{Nind}
Let $\gamma=(m,N,a)$ be a monodromy datum. For an integer $d \geq 1$, consider the induced monodromy data 
$\gamma^{\dagger_d} =(dm,N,da)$, which we sometimes denote $\gamma^\dagger$.\end{notation}

If $\phi: C \to \PP$ is a $\mu_m$-cover with monodromy datum $\gamma$, then
${\rm Ind}_m^{dm}(\phi): {\rm Ind}_m^{dm} C \to \PP$ is a $\mu_{dm}$-cover with 
the induced monodromy datum $\gamma^{\dagger_d}$.
The signature type of ${\rm Ind}_m^{dm}(\phi)$ is 
$\cf^{\dagger_d}=\cf\circ\pi_d$, where $\pi_d:\ZZ/dm\ZZ\to\ZZ/m\ZZ$ denotes the natural projection. By definition, the $\mu$-ordinary polygon of $\Sh(\mu_{dm},\cf^{\dagger_d})$ is $u^{\dagger_d}=u^d$.

\subsection{Numerical data and hypothesis (A)}\label{AdmissibleData}

\begin{notation} \label{Nadmissible}
	Fix integers $m_1,m_2\geq 2$, $N_1, N_2\geq 3$. Let $m_3={\rm lcm}(m_1,m_2)$.
	For $i=1,2$: let $d_i=m_3/m_i$;
	let $a_i=(a_i(1),\dots,a_i(N_i))$ be such that $\gamma_i=(m_i,N_i,a_i)$ is a 
	(generalized) monodromy datum; and let $g_i=g(m_i,N_i,a_i)$ as in \eqref{Egenus}. 
\end{notation}

\begin{definition}\label{defhp1}
	A pair of monodromy data $\gamma_1=(m_1,N_1, a_1)$, $\gamma_2=(m_2,N_2,a_2)$ as in \Cref{Nadmissible} is {\em admissible} 
	if it satisfies \[{\rm hypothesis (A)}: \ d_1 a_1(N_1)+d_2a_2(1)\equiv 0\bmod m_3.\]
\end{definition}

\begin{notation} \label{Dtype3}
	Assume hypothesis (A) for the pair $\gamma_1, \gamma_2$.
	Set $r_1 = \gcd (m_1, a_1(N_1))$, and $r_2=\gcd(m_2, a_2(1))$. 
	Let $r_0 = \gcd(r_1,r_2)$ and  
	let
	\begin{equation} \label{Dep}
	\epsilon=d_1d_2r_0-d_1-d_2+1 \ {\rm and} \ g_3=d_1g_1+d_2g_2+\epsilon. 
	\end{equation}
\end{notation}

Note that $d_1r_1=d_2r_2=d_1d_2r_0$ since ${\rm gcd}(d_1,d_2) = 1$. 

\begin{definition} \label{Dmd3}
	If $\gamma_1, \gamma_2$ is an admissible pair of (generalized) monodromy data, 
	we define $\gamma_3=(m_3, N_3, a_3)$ by 
	$m_3:={\rm lcm}(m_1,m_2)$, $N_3:=N_1+N_2-2$ and the $N_3$-tuple $a_3$ as \[a_3(i):=\begin{cases}
	d_1a_1(i) \text{ for $1\leq i\leq N_1-1$,}\\ d_2a_2(i-N_1+2) \text{ for $N_1\leq i\leq N_1+N_2-2$.}\end{cases}\] 
	\end{definition}
	
\begin{lemma} \label{Lmdgf}
The triple $\gamma_3$ from \Cref{Dmd3} is a (generalized) monodromy datum.
If $\phi_3: C \to \PP$ is a cover with monodromy datum $\gamma_3$,
then the genus of $C$ is $g_3$ as in \eqref{Dep}.  
\end{lemma}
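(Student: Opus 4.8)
The plan is to verify the three conditions in \Cref{Dmonodatum} for $\gamma_3=(m_3,N_3,a_3)$ directly from the definitions, and then to compute the genus via \eqref{Egenus}.

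\textbf{Checking that $\gamma_3$ is a (generalized) monodromy datum.} First I would confirm that $m_3\geq 2$ and $N_3=N_1+N_2-2\geq 3+3-2=4\geq 3$, so the discrete parameters are in range. For condition (1), I need $a_3(i)\not\equiv 0\bmod m_3$ for each $i$ — but since we allow a generalized monodromy datum, this is automatic once we adopt the convention that when $a_3(i)\equiv 0$ we set $\gcd(a_3(i),m_3)=m_3$. (If $\gamma_1,\gamma_2$ are genuine monodromy data and we are in the situation where no cancellation occurs, one checks $d_ia_i(j)\not\equiv 0\bmod m_3$ using $a_i(j)\not\equiv 0\bmod m_i$ and $d_i=m_3/m_i$; but for the general statement we do not need this.) For condition (2), $\gcd(m_3,a_3(1),\dots,a_3(N_3))$: the entries coming from $\gamma_1$ are $d_1a_1(1),\dots,d_1a_1(N_1-1)$ and similarly $d_2a_2(2),\dots,d_2a_2(N_2)$ from $\gamma_2$; using $\gcd(d_1,d_2)=1$ together with $\gcd(m_1,a_1(1),\dots,a_1(N_1))=1$ and the analogous statement for $\gamma_2$, one deduces the overall gcd is $1$ (here one has to be slightly careful because the indices $N_1$ and $1$ are dropped, but hypothesis (A) ties $d_1a_1(N_1)$ and $d_2a_2(1)$ together modulo $m_3$, which suffices to recover the missing generators mod $m_3$). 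For condition (3), $\sum_i a_3(i) = d_1\sum_{j=1}^{N_1-1}a_1(j) + d_2\sum_{j=2}^{N_2}a_2(j)$; since $d_1\sum_{j=1}^{N_1}a_1(j)\equiv 0$ and $d_2\sum_{j=1}^{N_2}a_2(j)\equiv 0\bmod m_3$ (these follow from condition (3) for $\gamma_1,\gamma_2$ after multiplying by $d_i$), subtracting gives $\sum_i a_3(i)\equiv -d_1a_1(N_1)-d_2a_2(1)\equiv 0\bmod m_3$ by hypothesis (A).

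\textbf{Computing the genus.} By \eqref{Egenus}, $g(\gamma_3)=1+\tfrac12\big((N_3-2)m_3-\sum_{i=1}^{N_3}\gcd(a_3(i),m_3)\big)$. I would substitute $N_3-2=N_1+N_2-4$ and $m_3=d_im_i$, and split the gcd sum according to the two blocks. For the block from $\gamma_1$: $\sum_{j=1}^{N_1-1}\gcd(d_1a_1(j),m_3)=d_1\sum_{j=1}^{N_1-1}\gcd(a_1(j),m_1)$ since $\gcd(d_1a_1(j),d_1m_1)=d_1\gcd(a_1(j),m_1)$; this equals $d_1\big(\sum_{j=1}^{N_1}\gcd(a_1(j),m_1)\big)-d_1r_1$ with $r_1=\gcd(m_1,a_1(N_1))$, and similarly the block from $\gamma_2$ contributes $d_2\big(\sum_{j=1}^{N_2}\gcd(a_2(j),m_2)\big)-d_2r_2$. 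Using $g_i=1+\tfrac12\big((N_i-2)m_i-\sum_j\gcd(a_i(j),m_i)\big)$, i.e. $\sum_j\gcd(a_i(j),m_i)=(N_i-2)m_i+2-2g_i$, and bookkeeping the $d_im_i=m_3$ terms, the expression should collapse to $g_3 = d_1g_1+d_2g_2+\epsilon$ with $\epsilon=d_1d_2r_0-d_1-d_2+1$, after replacing $d_1r_1=d_2r_2=d_1d_2r_0$ as noted in the excerpt (here $r_0=\gcd(r_1,r_2)$, and the equality $d_1r_1=d_2r_2=d_1d_2r_0$ needs $\gcd(d_1,d_2)=1$, which forces $r_1=d_2r_0$ and $r_2=d_1r_0$).

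\textbf{Main obstacle.} The genuinely delicate point is condition (2) — that the gcd of all the $a_3(i)$ together with $m_3$ equals $1$ — because the clutching construction discards the branch points indexed by $N_1$ (from $\gamma_1$) and $1$ (from $\gamma_2$), and a priori dropping these generators could enlarge the gcd. The resolution is that hypothesis (A), $d_1a_1(N_1)\equiv -d_2a_2(1)\bmod m_3$, lets one express the missing generators modulo $m_3$ in terms of the retained ones, so the gcd with $m_3$ is unchanged; making this argument clean (rather than hand-wavy) is the one spot that requires care. The arithmetic in the genus computation is routine but lengthy, and the bookkeeping of which indices are dropped and the substitution $d_1r_1=d_2r_2=d_1d_2r_0$ must be done carefully to land exactly on the stated $\epsilon$.
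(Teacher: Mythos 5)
Your verification is correct in substance and follows the same route as the paper, whose proof is simply ``immediate from \Cref{Dmonodatum} and \eqref{Egenus}'': check conditions (1)--(3) directly and expand the genus formula, using $\gcd(d_ia_i(j),d_im_i)=d_i\gcd(a_i(j),m_i)$ and $d_1r_1=d_2r_2=d_1d_2r_0$. One small correction to the step you yourself flag as the delicate one: hypothesis (A) is not the right tool for condition (2). It relates the two \emph{dropped} entries $d_1a_1(N_1)$ and $d_2a_2(1)$ to each other, not to the retained ones. What actually recovers the missing generator is condition (3) for $\gamma_i$ multiplied by $d_i$, namely $d_1a_1(N_1)\equiv-\sum_{j=1}^{N_1-1}d_1a_1(j)\bmod m_3$ (and likewise for $\gamma_2$); concretely, if a prime $q$ divides the gcd in condition (2) for $\gamma_3$, then since $\gcd(d_1,d_2)=1$ one may assume $q\nmid d_1$, whence $q\mid m_1$ and $q\mid a_1(j)$ for $j<N_1$, so condition (3) for $\gamma_1$ forces $q\mid a_1(N_1)$, contradicting condition (2) for $\gamma_1$. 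Hypothesis (A) is genuinely needed only for condition (3) of $\gamma_3$, where you use it correctly. The genus computation is right as sketched.
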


\begin{proof} Immediate from \Cref{Dmonodatum} and \eqref{Egenus}. 
\end{proof}

The signature type for $\phi_3$ is given in \Cref{Dsig3}, see \Cref{Lsig3}.

\begin{remark} \label{Rmodifyslightly}
A pair $\gamma_1, \gamma_2$ of non-admissible monodromy data can be modified slightly to produce a pair $\gamma_1', \gamma_2'$ of admissible generalized monodromy data by marking an extra unramified fiber.  
Specifically, let \begin{enumerate}
\item $\gamma'_1=(m_1,N_1+1, a'_1)$ with $a'_1(i)=a_1(i)$ for $1\leq i\leq N_1$,  and $a'_1(N_1+1)=0$;
\item $\gamma'_2=(m_2,N_2+1,a'_2)$ with
$a'_2(1)=0$ and
$a'_2(i)=a_2(i-1)$ for $2\leq i\leq N_2+1$. 
\end{enumerate}
This does not change the geometry, because $Z(\gamma'_i)=Z(\gamma_i)$ for $i=1,2$ by \Cref{plus1}.
\end{remark}

\subsection{Clutching morphisms for cyclic covers} \label{Sclutchall}

\begin{notation}
Let $\gamma_1=(m_1,N_1, a_1)$, $\gamma_2=(m_2,N_2,a_2)$ be an admissible pair of monodromy data 
as in \Cref{Nadmissible}.
Let $\gamma_3=(m_3, N_3, a_3)$
be the monodromy datum from \Cref{Dmd3}.
For $i=1,2,3$, let $\tilde{Z}_i=\tilde{Z}(\gamma_i)$ be as in \Cref{DzmNa}.
\end{notation}

Recall that the points of $\tilde{Z}^\circ_3$ represent
		$\mu_{m_3}$-covers $C \to \PP$ with monodromy datum $\gamma_3$, where $C$ is smooth.
		The next result is more general than related results in the literature.

\begin{proposition} \label{Pclutchadd}
	If hypothesis (A) (the admissible condition) is satisfied, there is a clutching morphism
		$\kappa: \tilde{Z}_{1} \times \tilde{Z}_{2} \to \tilde{Z}_3$,
		and the image of $\kappa$ is in the boundary of $\tilde{Z}^\circ_3$. 
		\end{proposition}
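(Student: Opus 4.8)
The plan is to construct the clutching morphism $\kappa$ explicitly on the level of $\mu_{m_3}$-covers by inducing each $\mu_{m_i}$-cover up to degree $m_3$ and glueing the two induced covers along compatible ramified fibers, then to verify that the resulting stable $\mu_{m_3}$-cover has monodromy datum $\gamma_3$, and finally to check that its image lies in the boundary of $\tilde Z_3$ (i.e.\ meets the closure of $\tilde Z_3^\circ$). Concretely, given a point of $\tilde Z_1 \times \tilde Z_2$ represented by $(\phi_1 \colon C_1 \to \PP, \iota_1)$ and $(\phi_2 \colon C_2 \to \PP, \iota_2)$ with their orderings and choices of ramified points, I first form the induced covers ${\rm Ind}_{m_1}^{m_3}(\phi_1)$ and ${\rm Ind}_{m_2}^{m_3}(\phi_2)$ as in \Cref{Nind}, which carry monodromy data $\gamma_1^{\dagger_{d_1}}$ and $\gamma_2^{\dagger_{d_2}}$. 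On the base $\PP$, I specialize so that the last branch point of $\phi_1$ and the first branch point of $\phi_2$ are identified to a common node of a genus-$0$ stable curve $\PP \cup \PP$; hypothesis (A), namely $d_1 a_1(N_1) + d_2 a_2(1) \equiv 0 \bmod m_3$, is exactly the condition that makes the local monodromies of the two induced covers at this node inverse to each other, so that the fiber products glue along the chosen ramified points into an \emph{admissible} stable $\mu_{m_3}$-cover over the nodal base. This is the standard compatibility needed for Knudsen-type clutching; the reference point is that admissible covers are stable under this glueing when the branch cycles match at the node (cf.\ \cite{knudsen2,AP:trielliptic}), and the role of the inducing operation is precisely to promote both covers into the common group $\mu_{m_3}$ before glueing.

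Next I must identify the glued cover with a point of $\tilde Z_3 = \tilde{\mathcal M}_{\mu_{m_3}}^{\gamma_3}$. The remaining branch points are the $N_1 - 1$ unglued branch points of $\phi_1$ (with local monodromies $d_1 a_1(i)$, $1 \le i \le N_1 - 1$) and the $N_2 - 1$ unglued branch points of $\phi_2$ (with local monodromies $d_2 a_2(j)$, $2 \le j \le N_2$), for a total of $N_3 = N_1 + N_2 - 2$, and the $N_3$-tuple of local monodromies is exactly $a_3$ from \Cref{Dmd3}. By \Cref{Lmdgf}, $\gamma_3$ is a (generalized) monodromy datum and the arithmetic genus of the glued curve equals $g_3 = d_1 g_1 + d_2 g_2 + \epsilon$; here the $\epsilon$ term from \eqref{Dep} accounts for the fact that the two induced fibers over the node each consist of $d_1 r_1 = d_2 r_2 = d_1 d_2 r_0$ points, which are glued in a cyclic pattern so the node is replaced by a chain/cycle contributing $\epsilon = d_1 d_2 r_0 - d_1 - d_2 + 1$ to the genus (and, when $r_0 > 1$, the glued curve is not of compact type). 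The orderings and ramified-point choices on the two factors induce such data on the glued cover in the obvious way, up to the combinatorial bookkeeping at the glued fiber; one checks this is well-defined and yields the claimed map $\kappa$ of moduli stacks. That $\kappa$ lands in $\overline{\mathcal M}_{\mu_{m_3}}$ (admissible stable covers of a genus-$0$ stable curve) rather than just a naive fiber-product stack is what makes it a morphism into $\tilde Z_3$, and this is where the "admissible" terminology in the statement is used.

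Finally, to see that the image lies in the \emph{boundary} of $\tilde Z_3^\circ$ — i.e.\ in $\tilde Z_3 \setminus \tilde Z_3^\circ$ but in its closure — note the glued curve is visibly singular (it has a node, or a cycle of nodes, over the identified point on the base), so the image is disjoint from $\tilde Z_3^\circ$; and since $\tilde Z_3^\circ$ is open and dense in the irreducible stack $\tilde Z_3$ (by \cite[Lemma 2.2]{AP:trielliptic} and the irreducibility of $\tilde{\mathcal M}_{\mu_m}^\gamma$ cited in \Cref{Snotation}), the complement of $\tilde Z_3^\circ$ is contained in the closure of $\tilde Z_3^\circ$, which gives the boundary claim; alternatively one exhibits an explicit one-parameter smoothing of the nodal base together with the cover, deforming the glued curve to a smooth $\mu_{m_3}$-cover with the same monodromy datum $\gamma_3$, which both reproves density and will be needed in the sequel.

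\textbf{Main obstacle.} I expect the genuinely delicate point to be the combinatorics of the fiber over the node: verifying that the $d_1 r_1$ preimages on the ${\rm Ind}(\phi_1)$ side match up cyclically with the $d_2 r_2$ preimages on the ${\rm Ind}(\phi_2)$ side under hypothesis (A), that the resulting admissible cover has the stated dual graph (hence arithmetic genus $g_3$ with the correct $\epsilon$), and that $\kappa$ is well-defined on the markings and is actually a morphism of Deligne--Mumford stacks (not merely a map on points). Establishing closedness/immersion properties of $\kappa$, if needed later, would require comparing deformation spaces, but for the stated proposition only the existence of $\kappa$ and the location of its image are required, so the node-fiber analysis is the crux.
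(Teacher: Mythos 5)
Your proposal matches the paper's proof in all essentials: induce both covers up to $\mu_{m_3}$, observe that hypothesis (A) forces the local monodromies over the glued branch points to be inverse (so the $d_1r_1=d_2r_2$ preimage points can be identified $\mu_{m_3}$-equivariantly into an admissible cover of a two-component genus-$0$ curve with inertia type $a_3$), and conclude the image lies in the boundary of $\tilde Z_3^\circ$. The only cosmetic difference is that the paper invokes Ekedahl's local admissibility/smoothability criterion \cite[2.2]{ekedahlhurwitz} directly for the boundary claim, whereas you derive it from density of the smooth locus in the irreducible stack of admissible covers; these amount to the same thing.
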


\begin{proof}
	For $i=1,2$, let $\phi_i:C_i \to {\mathbb P}^1$ be the 
	$\mu_{m_i}$-cover with $N_i$ ordered and labeled $\mu_{m_i}$-orbits of points which is represented by a point 
	of $\tilde{Z}_{i}$. 
	There is a natural inclusion $\mu_{m_i} \subset \mu_{m_3}$. 
	Let $C^{\dagger}_i = {\rm Ind}_{m_i}^{m_3}(C_i)$ be the induced curve
	and let $\phi^{\dagger}_i: C^{\dagger}_i \to {\mathbb P}^1$ be the induced cover.
	It has inertia type $a^\dagger_i=d_ia_i = (d_ia_i(1),\dots, d_ia_i(N_i))$. 
	
	We define the morphism $\kappa$ on the pair $(\phi_1, \phi_2)$.
		There are $d_1r_1=d_1{\rm gcd}(m_1,a_1(N_1))$ points of $C^\dagger_1$ above $t_1(N_1)$.
		By hypothesis (A), this equals $d_2r_2=d_2{\rm gcd}(m_2, a_2(1))$, the number of points 
		of $C^\dagger_2$ above $t_2(1)$.
		Let $C_3$ be the curve whose components are the $d_1$ components of $C^\dagger_1$ 
		and the $d_2$ components of $C^\dagger_2$, formed by 
		identifying these labeled $d_1r_1=d_2r_2$ points in ordinary double points,
		in a $\mu_{m_3}$-equivariant way.
		
		Then $C_3$ is a $\mu_{m_3}$-cover of a tree $P$ of two projective lines. It has $N_3$ labeled 
		$\mu_{m_3}$-orbits with inertia type $a_3$ and is thus
		represented by a point of $\tilde{Z}_3$.
		The admissible condition in \Cref{defhp1} is exactly the (local) admissible condition on the 
		covers $\phi^\dagger_1$ and $\phi^\dagger_2$ at the point(s) above $t_1(N_1)$ and $t_2(1)$.  
		By \cite[2.2]{ekedahlhurwitz}, the $\mu_{m_3}$-cover
		$C_3 \to P$ is in the boundary of $\tilde{Z}^\circ_3$ if and only if hypothesis (A) is satisfied.
		\end{proof}

The curve $C_3$ constructed in \Cref{Pclutchadd} is a $\mu_{m_3}$-cover of type $\gamma_3$		
	and thus has arithmetic genus $g_3$ by \Cref{Lmdgf}.	

\begin{proposition} \label{PNPC3}
	The curve $C_3$ constructed in the proof of \Cref{Pclutchadd} has compact type if and only if $\epsilon = 0$.
	It has Newton polygon
\begin{eqnarray}\label{EnuC3}
	\nu(C_3) & = & \nu(C_1)^{d_1} \oplus \nu(C_2)^{d_2} \oplus  \ord^{\epsilon}. \qquad 
\end{eqnarray}
	\end{proposition}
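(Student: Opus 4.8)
The plan is to analyze the curve $C_3$ by separating its semi-abelian Jacobian into an abelian part and a torus part, reading off each from the combinatorics of the clutching construction. First I would recall that $C_3$ is a stable curve whose dual graph $\Gamma$ has vertices the $d_1+d_2$ irreducible components (the $d_1$ components of $C_1^\dagger = \operatorname{Ind}_{m_1}^{m_3}C_1$ and the $d_2$ components of $C_2^\dagger = \operatorname{Ind}_{m_2}^{m_3}C_2$) and edges the $d_1 r_1 = d_2 r_2 = d_1 d_2 r_0$ nodes. Every node joins a component of $C_1^\dagger$ to a component of $C_2^\dagger$, so $\Gamma$ is bipartite; one then checks that it is connected (this follows from the $\mu_{m_3}$-equivariance of the identification together with $\gcd(d_1,d_2)=1$, which forces the $\mu_{m_3}$-orbit of any component to include members of both sides). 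Thus $b_1(\Gamma) = \#\text{edges} - \#\text{vertices} + 1 = d_1 d_2 r_0 - (d_1+d_2) + 1 = \epsilon$, using Notation~\ref{Dtype3}. The curve $C_3$ has compact type if and only if $\Gamma$ is a tree, i.e.\ $b_1(\Gamma)=0$, i.e.\ $\epsilon=0$, which gives the first assertion.

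Next I would identify the structure of $\operatorname{Jac}(C_3)$. For a nodal curve, $\operatorname{Jac}(C_3)$ sits in an exact sequence $1 \to T \to \operatorname{Jac}(C_3) \to \prod_v \operatorname{Jac}(\widetilde{C_{3,v}}) \to 0$, where the product is over the irreducible components $v$ (with normalizations $\widetilde{C_{3,v}}$) and $T$ is a torus of dimension $b_1(\Gamma) = \epsilon$. The normalizations of the components are exactly the $d_1$ copies of the components of $C_1^\dagger$ and the $d_2$ copies of the components of $C_2^\dagger$; but each $C_i^\dagger = \operatorname{Ind}_{m_i}^{m_3}C_i$, being a disjoint union of $d_i$ curves each isomorphic to $C_i$ (as $\operatorname{Ind}$ of a $\mu_{m_i}$-cover along $\mu_{m_i}\subset\mu_{m_3}$ is literally $d_i$ disjoint copies of $C_i$), has Jacobian $\operatorname{Jac}(C_i)^{d_i}$. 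Hence $\prod_v \operatorname{Jac}(\widetilde{C_{3,v}}) \cong \operatorname{Jac}(C_1)^{d_1} \times \operatorname{Jac}(C_2)^{d_2}$.

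Finally I would combine the Newton polygon recipe for semi-abelian schemes recalled in the excerpt (for a semi-abelian $Y$ extension of abelian $X$ by a torus of dimension $\delta$, one has $\nu(Y) = \nu(X) \oplus \operatorname{ord}^\delta$) with the additivity of Newton polygons under products of abelian varieties ($\nu(X_1 \times X_2) = \nu(X_1) \oplus \nu(X_2)$, immediate from the definition via crystalline cohomology and the multi-set description of slopes). This yields
\[
\nu(C_3) = \nu\big(\operatorname{Jac}(C_1)^{d_1} \times \operatorname{Jac}(C_2)^{d_2}\big) \oplus \operatorname{ord}^{\epsilon} = \nu(C_1)^{d_1} \oplus \nu(C_2)^{d_2} \oplus \operatorname{ord}^{\epsilon},
\]
as claimed. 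The main obstacle I anticipate is the bookkeeping in the first paragraph: verifying carefully that the dual graph $\Gamma$ is connected and computing $b_1(\Gamma)$ correctly — in particular confirming that the number of nodes is $d_1 d_2 r_0$ (equivalently $d_1 r_1 = d_2 r_2$, which is hypothesis (A) combined with $\gcd(d_1,d_2)=1$ as noted after Notation~\ref{Dtype3}) and that no node is internal to $C_1^\dagger$ or $C_2^\dagger$. Everything downstream (the semi-abelian sequence, the torus dimension equals $b_1$, the product decomposition of the normalization, and the Newton polygon additivity) is standard once the graph combinatorics is pinned down.
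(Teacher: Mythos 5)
Your proposal is correct and follows essentially the same route as the paper: identify the bipartite dual graph, compute its first Betti number as $\epsilon$ (the paper does this by removing edges, you by the formula $E-V+1$ for a connected graph, with the same result), invoke the semi-abelian exact sequence $0\to T\to \mathrm{Jac}(C_3)\to \mathrm{Jac}(C_1)^{d_1}\oplus\mathrm{Jac}(C_2)^{d_2}\to 0$ with $\dim T=\epsilon$, and conclude by additivity of Newton polygons. The only cosmetic difference is that the paper notes each component of $C_1^\dagger$ meets each component of $C_2^\dagger$ in exactly $r_0$ points, which gives connectivity of the dual graph immediately, whereas you argue connectivity via equivariance.
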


The term $\ord^{\epsilon}$ can be viewed as the {\em defect} of $\nu(C_3)$.
It measures the number of extra slopes of $0$ and $1$ that arise when $C_3$ does not have 
compact type.  
By \Cref{Dtype3},
$\epsilon = 0$ if and only if $r_0=1$ and either $d_1=1$ or $d_2=1$.

\begin{proof}
	By construction,
	the dual graph of $C_3$ is a bipartite graph, 
	with $d_1$ (resp.\ $d_2$) vertices in bijection with the components of $C^\dagger_1$ (resp.\ $C^\dagger_2$).
	In $C_3$, each of the components of $C^\dagger_1$ intersects each of the components of $C^\dagger_2$ 
	in $r_0$ points.
	After removing $d_1d_2(r_0-1)$ edges from the dual graph, there is a unique edge
	between each pair of vertices on opposite sides.
	After removing another $(d_1-1)(d_2-1)$ edges from the dual graph, it is a tree.
	Thus the Euler characteristic of the dual graph of $C_3$ is 
	$d_1d_2(r_0-1)+(d_1-1)(d_2-1)$, which equals $\epsilon$.	
	In particular, $C_3$ has compact type if and only if $\epsilon = 0$.
	
	By \cite[9.2.8]{BLR}, for some torus $T$ of rank $\epsilon$, there is a short exact sequence
	\[0 \to T \to {\rm Jac}(C_3) \to {\rm Jac}(C_1)^{d_1} \oplus {\rm Jac}(C_2)^{d_2} \to 0.\]
	Since ${\rm dim}_{\FF_p}(\mu_p, T)=\epsilon$, the Newton polygon of
	${\rm Jac}(C_3)$ is the amalgamate sum (\Cref{NPsum}) of the Newton polygon of 
	${\rm Jac}(C_1)^{d_1} \oplus {\rm Jac}(C_2)^{d_2}$ and $\ord^{\epsilon}$, which yields \eqref{EnuC3}.
\end{proof}

\subsection{The signature}

We find the signature $\cf_3$ for a cover with monodromy datum $\gamma_3$.

\begin{definition}\label{delta}
	Let $d,R \in \ZZ_{\geq 1}$ with $dR|m$.
	For $n\in\ZZ/m\ZZ$, we define
	$\delta_{d, dR}(n):=1$ if $dRn\equiv 0\bmod m$ and $dn\not\equiv 0\bmod m$, and $\delta_{d, dR}(n):=0$ otherwise. 
	\end{definition}
	
	Equivalently, if $\tau\in\CT$, then
	$\delta_{d, dR}(\tau)=1$ if the order of $\tau$ in $\ZZ/m \ZZ$ divides $dR$ but not $d$, and $\delta_{d, dR}(\tau)=0$ otherwise.
	Since $\delta_{d, dR}(\tau)$ only depends on the orbit $\co$ of $\tau$, we also write  $\delta_{d, dR}(\co):=\delta_{d, dR}(\tau)$, for any/all $\tau\in\co$.
	
\begin{definition} \label{Dsig3}
	Let $\delta:=\delta_{d,dr_0}+\delta_{d_1,d_1d_2}-\delta_{1,d_2}$, for $d=d_1d_2$.
	For $n\in\ZZ/m_3\ZZ - \{0\}$, let
	\begin{eqnarray}\label{signaturesum}\label{eq_signature}
	\cf_3(\tau_n)=\cf^\dagger_1(\tau_n)+\cf^\dagger_2(\tau_n)+ \delta(n).
	\end{eqnarray}
\end{definition}

By definition, $\delta(n)=1$ if  $d_1d_2r_0n\equiv 0\bmod m_3$ and $d_1n\not\equiv 0\bmod m_3$ and $d_2n\not \equiv 0\bmod m_3$, and $\delta(n)=0$ otherwise.

\begin{lemma} \label{Lsig3}
If $\phi_3: C \to \PP$ is a cover with (generalized) monodromy datum $\gamma_3$, as defined in \Cref{Dmd3},
then the signature type of $C_3$ is $\cf_3$.  
\end{lemma}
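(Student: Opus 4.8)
The plan is to compute the signature $\cf_3(\tau_n) = \dim_\CC H^0(C_3,\Omega^1)_{\tau_n}$ directly and match it with the claimed formula \eqref{eq_signature}. I would use the formula \eqref{DMeqn} of Deligne--Mostow--Moonen, which expresses the signature of a $\mu_m$-cover purely in terms of its inertia type via fractional parts: for $n \not\equiv 0 \bmod m_3$,
\[
\cf_3(\tau_n) = -1 + \sum_{i=1}^{N_3}\Big\langle \frac{-n\,a_3(i)}{m_3}\Big\rangle,
\]
and similarly for $\cf_1^\dagger, \cf_2^\dagger$ in terms of the inertia types $a_1^\dagger = d_1 a_1$ and $a_2^\dagger = d_2 a_2$ (note that as $\mu_{m_3}$-covers, $\phi_i^\dagger$ has $N_i$ branch points with those local monodromies, and its signature $\cf_i^\dagger = \cf_i \circ \pi_{d_i}$ is what appears in \Cref{Nind}). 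By \Cref{Dmd3} the inertia type $a_3$ is the concatenation of $(d_1 a_1(1),\dots,d_1a_1(N_1-1))$ and $(d_2a_2(2),\dots,d_2a_2(N_2))$, so the sum over $i=1,\dots,N_3$ for $\cf_3$ is exactly the sum for $\cf_1^\dagger$ minus its last term $\langle -n d_1 a_1(N_1)/m_3\rangle$, plus the sum for $\cf_2^\dagger$ minus its first term $\langle -n d_2 a_2(1)/m_3\rangle$. Reassembling the two spare $-1$'s, this gives
\[
\cf_3(\tau_n) = \cf_1^\dagger(\tau_n) + \cf_2^\dagger(\tau_n) + 1 - \Big\langle \tfrac{-n d_1 a_1(N_1)}{m_3}\Big\rangle - \Big\langle \tfrac{-n d_2 a_2(1)}{m_3}\Big\rangle .
\]

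So the heart of the matter is the identity
\[
\delta(n) = 1 - \Big\langle \tfrac{-n d_1 a_1(N_1)}{m_3}\Big\rangle - \Big\langle \tfrac{-n d_2 a_2(1)}{m_3}\Big\rangle .
\]
Hypothesis (A) says $d_1 a_1(N_1) + d_2 a_2(1) \equiv 0 \bmod m_3$, so the two fractional parts are of the form $\langle x \rangle$ and $\langle -x \rangle$ with $x = -n d_1 a_1(N_1)/m_3 \bmod 1$. Now $\langle x\rangle + \langle -x\rangle$ equals $1$ if $x \notin \ZZ$ and $0$ if $x\in\ZZ$; hence the right-hand side is $1$ precisely when $m_3 \mid n d_1 a_1(N_1)$ fails, i.e.\ when $n d_1 a_1(N_1) \not\equiv 0 \bmod m_3$, and $0$ otherwise. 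It remains to check that "$n d_1 a_1(N_1)\not\equiv 0\bmod m_3$" is equivalent to the condition defining $\delta(n)$, namely "$d_1 d_2 r_0\, n \equiv 0$, $d_1 n \not\equiv 0$, $d_2 n\not\equiv 0 \bmod m_3$." For this I would use that $\gcd(m_1, a_1(N_1)) = r_1$, so $d_1 a_1(N_1)$ has the same set of $n$ killing it mod $m_3$ as $d_1 r_1 = d_1 d_2 r_0$ does — more precisely, $n d_1 a_1(N_1) \equiv 0 \bmod m_3$ iff $n d_1 r_1 \equiv 0 \bmod m_3$ because $a_1(N_1)/r_1$ is a unit mod $m_1/r_1$ and $d_1 = m_3/m_1$; one then needs that the complementary condition matches, using $d_1 r_1 = d_2 r_2 = d_1 d_2 r_0$ and $\gcd(d_1,d_2)=1$ to rewrite $d_1 d_2 r_0 n \equiv 0 \bmod m_3$ together with the genuinely-new constraints $d_1 n, d_2 n \not\equiv 0$. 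The translation between $\delta(n)$ written via $\delta_{d,dr_0} + \delta_{d_1,d_1d_2} - \delta_{1,d_2}$ in \Cref{Dsig3} and the unpacked order condition stated just after it is the remark already made in the text, so I would simply cite it.

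The main obstacle I anticipate is the elementary-but-fiddly equivalence of the two descriptions of $\delta(n)$: carefully tracking that $n d_1 a_1(N_1) \equiv 0 \bmod m_3$ exactly when the order of $\tau_n$ in $\ZZ/m_3\ZZ$ divides $d_1 d_2 r_0$ but not (only) one of $d_1, d_2$ individually, and reconciling the asymmetric-looking constraint with the symmetric form of $\delta$. Everything else — invoking \eqref{DMeqn} for all three covers, the concatenation structure of $a_3$ from \Cref{Dmd3}, and the $\langle x\rangle + \langle -x\rangle$ dichotomy — is routine. One should also note at the outset the edge cases where $a_1(N_1)$ or $a_2(1)$ is $\equiv 0$ (the generalized monodromy datum case), with the convention $\gcd(0,m)=m$; there $r_1$ or $r_2$ equals $m_1$ or $m_2$ and the fractional-part terms vanish identically in $n$, consistent with $\delta(n)=0$, so the formula still holds.
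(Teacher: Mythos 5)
Your overall strategy is exactly the paper's: apply \eqref{DMeqn} to all three covers, observe that the sum defining $\cf_3(\tau_n)$ is the concatenation of the sums for $\cf_1^\dagger$ and $\cf_2^\dagger$ minus the two terms at the clutching point, and use hypothesis (A) to evaluate $\langle x\rangle+\langle -x\rangle$. However, you apply the fractional-part dichotomy backwards. Since the right-hand side of your key identity is $1-(\langle x\rangle+\langle -x\rangle)$ and $\langle x\rangle+\langle -x\rangle=1$ exactly when $x\notin\ZZ$, that right-hand side equals $1$ precisely when $x\in\ZZ$, i.e.\ when $nd_1a_1(N_1)\equiv 0\bmod m_3$ --- not when this fails, as you assert. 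Consequently the equivalence you set out to verify in the final step (``$nd_1a_1(N_1)\not\equiv 0\bmod m_3$'' versus ``$d_1d_2r_0\,n\equiv 0$, $d_1n\not\equiv 0$, $d_2n\not\equiv 0\bmod m_3$'') is false: within the range where $d_1n\not\equiv 0$ and $d_2n\not\equiv 0$, the two conditions are complementary rather than equivalent. A quick sanity check: $\sum_{\tau}\delta(\tau)$ must equal the toric defect $\epsilon=d_1d_2r_0-d_1-d_2+1$, which is small, whereas your version would force $\delta(n)=1$ for almost every $n$. The repair is just to flip the sign, and the sub-lemma you then invoke (``$nd_1a_1(N_1)\equiv 0\bmod m_3$ iff $nd_1r_1\equiv 0\bmod m_3$,'' using that $a_1(N_1)/r_1$ is a unit modulo $m_1/r_1$ and $d_1r_1=d_1d_2r_0$) is exactly the right one for the corrected statement, so the argument then closes as in the paper.

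One further small gap: your uniform computation uses \eqref{DMeqn} for $\cf_i^\dagger(\tau_n)$, which is valid only when $n\not\equiv 0\bmod m_i$; if $n\equiv 0\bmod m_i$ (but $n\not\equiv 0 \bmod m_3$) then $\cf_i^\dagger(\tau_n)=0$ while the formula would return $-1$. The paper disposes of the cases $n\equiv 0\bmod m_1$ and $n\equiv 0\bmod m_2$ in one line each at the start ($\delta(n)=0$ there and $\cf_3(\tau_n)$ equals the other $\cf_j^\dagger(\tau_n)$); you should do the same before running the fractional-part computation on the remaining $n$.
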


\begin{proof}
We use \eqref{DMeqn} to compute $\cf_3$.\footnote{Alternatively, one may deduce the formula for $\cf_3$ geometrically 
since the extra term $\delta$ records the $\ZZ/m_3 \ZZ$-action on the dual graph of the curve $C_3$ 
constructed in \Cref{Pclutchadd}.}
If $n\equiv 0\bmod m_1$, then 
$\cf_1^\dagger(\tau_n)=0$ and $\cf_3(\tau_n)=\cf^\dagger_2(\tau_n)$.
If $n\equiv 0\bmod m_2$, then $\cf_2^\dagger(\tau_n)=0$ and $\cf_3(\tau_n)=\cf^\dagger_1(\tau_n)$.
For $n\in\ZZ/m_3\ZZ$, with $n\not\equiv 0\bmod m_1$ and $n\not\equiv 0\bmod m_2$, then
	\[(\cf^\dagger_1(\tau_n)+1)+(\cf^\dagger_2(\tau_n)+1)-(\cf_3(\tau_n)+1)=\langle \frac{-nd_1a_1(N_1)}{m_3}\rangle + \langle \frac{-nd_2a_2(1)}{m_3}\rangle.\]
	The right hand side is $0$ or $1$; it is $0$ if and only if $nd_1a_1(N_1)\equiv nd_2a_2(1)\equiv 0 \bmod m_3$.
\end{proof}

\subsection{Compatibility with Shimura variety setting}

\begin{notation} \label{Nshimclutch}
Fix an admissible pair $\gamma_1=(m_1,N_1, a_1)$, $\gamma_2=(m_2,N_2,a_2)$ of monodromy data as in \Cref{Nadmissible}. 
Consider the monodromy datum $\gamma_3$ as in \Cref{Dmd3}. In particular, let $m_3={\rm lcm}(m_1,m_2)$
and let $\cf_3$ be as in \Cref{Dsig3}.

For each $i=1,2,3$, let $Z_i:=Z(m_i, N_i, {a_i})$, 
and similarly $Z_i^\circ$, $\tilde{Z}^c_i$, etc as in Definition \ref{DzmNa}. 
Let $\Sh_i:=\Sh_i(\mu_{m_i}, \cf_i)$ denote the Shimura substack of ${\mathcal A}_g$ as in \Cref{shimura}.  
Let $\CX_i$ be the universal abelian scheme over $\Sh_i$, $B_i:=B(\Sh_i)$ the set of Newton polygons of $\Sh_i$, 
and $u_i$ the $\mu$-ordinary Newton polygon in $B_i$ from \Cref{Dmuord}.
\end{notation}

Via the Torelli map $T$, the clutching morphism $\kappa: \tilde{Z}^c_1 \times \tilde{Z}^c_2 \to \tilde{Z}_3$ 
is compatible with a morphism into the minimal compactification of the Shimura variety
\begin{eqnarray*}
\iota & : & \Sh_1\times \Sh_2\rightarrow \Sh^*_3 \qquad 
\end{eqnarray*} where $\iota(\CX_1,\CX_2):=\CX_1^{d_1} \oplus \CX^{d_2}_2.$\footnote{An abelian variety of dimension less than $g_3$ with $\mu_m$-action can be viewed as a point on the boundary of $\Sh^*_3$ as it comes from the pure part of some semi-abelian variety of dimension $g_3$ with $\mu_m$-action, which is a point on $\overline{\Sh}_3$. The image of the Torelli map in the minimal compactification is determined by the Torelli map on 
the irreducible components of the curve and forgets the dual graph structure.}
If $\epsilon=0$ then ${\rm Im}(\iota)$ lies in $\Sh_3$ and the reader may focus on this case;
if $\epsilon \not = 0$, then ${\rm Im}(\iota)$ is contained in the boundary $\Sh_3^* - \Sh_3$.

By \Cref{PNPC3}, $\iota(\Sh_1[\nu_1],\Sh_2[\nu_2])  \subseteq  \Sh^*_3( \nu_1^{d_1}\oplus  \nu^{d_2}_2\oplus \ord^{\epsilon})$, which yields:

\begin{lemma}\label{clutchNP}
	If $\nu_i\in B_i$, then $\nu_1^{d_1} \oplus  \nu_2^{d_2} \oplus  \ord^{\epsilon} \in B_3$.   
	In particular,
	$u_3\geq u_1^{d_1}\oplus  u_2^{d_2} \oplus \ord^{\epsilon}$. 
\end{lemma}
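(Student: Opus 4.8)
The plan is to deduce \Cref{clutchNP} directly from the geometric input that has already been established, namely \Cref{PNPC3} and the description of the morphism $\iota$ and its compatibility with the clutching morphism $\kappa$. First I would recall the key facts: by Hamacher's theorem (as quoted after \Cref{Dmuord}), for each irreducible component $S$ of a PEL-type Shimura variety $\Sh$, the set $B(S)$ of Newton polygons occurring on $S$ equals $B(\Sh_{\QQ_p},\mu_\hh)$, and every element of this set is actually realized by some point of $S$. So to show $\nu_1^{d_1}\oplus\nu_2^{d_2}\oplus\ord^\epsilon\in B_3$, it suffices to produce a single geometric point of $\Sh_3$ (or of its minimal compactification, if $\epsilon\neq 0$, using \eqref{eqn_codim_cpt} and the fact that the set of Newton polygons on $\overline{\Sh}_3$ coincides with that on $\Sh_3$) whose associated semi-abelian variety has this Newton polygon.

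The key steps, in order, are as follows. Pick points $x_i\in \Sh_i$, $i=1,2$, whose universal abelian scheme $\CX_{i,x_i}$ has Newton polygon $\nu_i$; these exist since $\nu_i\in B_i$ and each $B_i$ is exactly the set of occurring Newton polygons. Apply the morphism $\iota:\Sh_1\times\Sh_2\to\Sh_3^*$ to $(x_1,x_2)$; by construction $\iota(x_1,x_2)$ corresponds to the (pure part of a semi-abelian variety extending) $\CX_{1,x_1}^{d_1}\oplus \CX_{2,x_2}^{d_2}$. The Newton polygon of this object is $\nu_1^{d_1}\oplus\nu_2^{d_2}\oplus\ord^\epsilon$: either invoke \Cref{PNPC3} directly via the Torelli map and the clutching morphism $\kappa$ on a pair of $\mu_m$-covers realizing $x_1,x_2$, or simply observe that the amalgamate sum formula for Newton polygons under direct sums, together with the torus-contribution $\ord^\epsilon$, gives this. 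Hence $\nu_1^{d_1}\oplus\nu_2^{d_2}\oplus\ord^\epsilon$ occurs on $\overline{\Sh}_3$, and therefore on $\Sh_3$, so it lies in $B_3$. For the ``in particular'' statement, take $\nu_i=u_i$, the $\mu$-ordinary polygon of $\Sh_i$; then $u_1^{d_1}\oplus u_2^{d_2}\oplus\ord^\epsilon\in B_3$, and since $u_3$ is by \Cref{Dmuord} the unique maximal (lowest) element of $B_3$, we get $u_3\ge u_1^{d_1}\oplus u_2^{d_2}\oplus\ord^\epsilon$.

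The only genuinely delicate point is the bookkeeping at the boundary when $\epsilon\neq 0$: one must be careful that ``Newton polygon occurring on $\Sh_3^*$'' genuinely translates into ``element of $B_3$''. This is handled by the discussion preceding \Cref{muordsmooth}: the Newton stratification extends to the toroidal and minimal compactifications, the strata are well-positioned, and by \eqref{eqn_codim_cpt} (and the accompanying statement from \cite{Lan-Stroh}) the set of Newton polygons on $\overline{\Sh}_3$ is identical to that on $\Sh_3$. The semi-abelian variety sitting over a boundary point of $\overline{\Sh}_3$ extending $\CX_{1,x_1}^{d_1}\oplus\CX_{2,x_2}^{d_2}$ has Newton polygon equal to that of its abelian part amalgamated with $\ord^\epsilon$ by the convention on Newton polygons of semi-abelian schemes fixed in \Cref{NPsum}, so the extension datum does not change the count. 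Thus I expect the main obstacle to be purely expository — stating the compactification argument cleanly — rather than mathematical, since all the substantive content is already packaged in \Cref{PNPC3} and Hamacher's theorem.
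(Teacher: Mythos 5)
Your proposal is correct and takes essentially the same route as the paper: the paper's justification is the single line that, by \Cref{PNPC3}, $\iota(\Sh_1[\nu_1],\Sh_2[\nu_2])\subseteq \Sh_3^*(\nu_1^{d_1}\oplus\nu_2^{d_2}\oplus\ord^{\epsilon})$, combined with the non-emptiness of the strata $\Sh_i[\nu_i]$ and the fact (from the Lan--Stroh discussion around \eqref{eqn_codim_cpt}) that the Newton polygons occurring on the compactification coincide with those on $\Sh_3$. The ``in particular'' is likewise just the maximality of $u_3$ in $B_3$, as you say.
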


In Proposition \ref{balanced}, we give a necessary and sufficient condition on the pair of signature types $(\cf_1,\cf_2)$  for the equality $u_3=u_1^{d_1}\oplus  u_2^{d_2} \oplus \ord^{\epsilon}$ to hold.


\section{The Torelli locus and the $\mu$-ordinary locus of Shimura varieties} \label{Sclutch}

In this section, we prove theorems about the intersection of the open Torelli locus with the 
$\mu$-ordinary Newton polygon stratum in a PEL-type Shimura variety.  
The main result, Theorem \ref{thm_muord}, provides a method 
to leverage information from smaller dimension to larger dimension.
This provides an inductive method to prove that the open Torelli locus intersects the $\mu$-ordinary stratum for certain types of families.

In Section \ref{Sinfclut},
we use the main theorem to 
establish the existence of smooth curves of arbitrarily large genus with prescribed Newton polygon, 
see \Cref{inf-ord,infinite-ord,inf-muord,twofamilyinfinite}.
For the base cases of the inductive method, we can use any instances when the $\mu$-ordinary Newton polygon is known to occur (see \Cref{muordoccur}).

\begin{remark} 
The method in this section does not give results for every monodromy datum $\gamma$.
For example, it is not known whether the $\mu$-ordinary Newton polygon occurs on 
	$Z^\circ(\gamma)$ for all $p \equiv 3,5 \bmod 7$ when $\gamma=(7,4, (1,1,2,3))$.  
	In this case, $\cf=(2,1,1,1,1,0)$, ${\rm dim}(Z(\gamma)) = 1$, and ${\rm dim}(S(\gamma))=2$.
	The three Newton polygons on $S(\gamma)$ are $(1/6, 5/6)$,  $(1/3, 2/3)^2$, and $ss^6$, which all have $p$-rank $0$.
	None of the degenerations for this family satisfy hypothesis (B) as defined below.
\end{remark}

\subsection{Hypothesis (B)}\label{ShypB}

We fix an admissible pair $\gamma_1=(m_1,N_1, a_1)$, $\gamma_2=(m_2,N_2,a_2)$ of (generalized) monodromy data as in \Cref{Nadmissible}.  
We fix a prime $p$ such that $p \nmid m_3={\rm lcm}(m_1,m_2)$ and work over $\overline{\FF}_p$.
Recall Notation \ref{Dtype3} and \ref{Nshimclutch}.
So $d_i=m_3/m_i$ and $\cf_i^\dagger:= \cf_i\circ\pi_{i}$ for $i=1,2$. 

\begin{definition}\label{def_hp2}
	The pair of monodromy data $\gamma_1, \gamma_2$ is {\em balanced} if,
	for each orbit $\co \in \CT=\hom_\QQ(\QQ[\mu_{m_3}],\CC)$ 
	and all $\omega,\tau\in\co$, the values of the induced signature types satisfy:
	\[{\rm hypothesis (B):} \ \text{if } \cf^\dagger_1(\omega)> \cf^\dagger_1(\tau) \text{ then }\cf^\dagger_2(\omega)\geq \cf^\dagger_2(\tau); \text{ if }\cf^\dagger_2(\omega)> \cf^\dagger_2(\tau) \text{ then }\cf^\dagger_1(\omega)\geq \cf^\dagger_1(\tau). \]
\end{definition}

\begin{remark}\label{rmk_hp2}
	\begin{enumerate}
	\item If $m_1=m_2$, then hypothesis (B) is symmetric for the pair $\gamma_1, \gamma_2$.
	\item
		If $\gamma_1=\gamma_2$, then hypothesis (B) is automatically satisfied.
		\item
		Hypothesis (B) depends (only) on the congruence of $p$ modulo $m_3$.
		If $p\equiv 1 \bmod m_3$, then each orbit $\co$ in $\CT$ has size one and hypothesis (B) is vacuously satisfied. 
		\item 
		Let $\gamma_3$ be as in \Cref{Dmd3}.
		If the pair $\gamma_1, \gamma_2$ satisfies hypotheses (A) and (B),
		then $\gamma_i, \gamma_3$ satisfies hypothesis (B) for $i=1$ and for $i=2$.
	\end{enumerate}
\end{remark}

\Cref{balanced} below gives a geometric interpretation of hypothesis (B).
Recall that $\epsilon = d_1d_2r_0 - d_1 - d_2 + 1$ from \Cref{Dtype3}.

\begin{proposition}\label{chain2}\label{balanced}
	Let $\gamma_1, \gamma_2$ be an admissible pair of monodromy data. 
	Consider the monodromy datum $\gamma_3 $ as in \Cref{Dmd3}.
	Then the equality $u_3=u_1^{d_1}\oplus  u_2^{d_2}\oplus \ord^\epsilon$
		holds if and only if the pair $\gamma_1,\gamma_2$ is balanced.	
\end{proposition}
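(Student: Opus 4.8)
The plan is to combine the inequality already available from \Cref{clutchNP} with the explicit combinatorial description of the $\mu$-ordinary polygon in \Cref{Sformulau}, reducing everything first to a single $\sigma$-orbit and then to a standard majorization fact about partitions. By \Cref{clutchNP} we have $u_3\ge u_1^{d_1}\oplus u_2^{d_2}\oplus\ord^\epsilon$ unconditionally, so the only issue is to characterize when equality holds.

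\emph{Reduction to one orbit.} The $\mu$-ordinary polygon decomposes as $u=\bigoplus_{\co\in\CO}u(\co)$, and by \eqref{eq_slopes}--\eqref{eq_multi} the component $u(\co)$ depends only on $|\co|$, on $g(\co)$, and on the multiset $\{\cf(\tau)\mid\tau\in\co\}$. Writing $\cf_i^\dagger=\cf_i\circ\pi_i$ we have $u_i^{d_i}=\bigoplus_\co u_{\cf_i^\dagger}(\co)$, and $\cf_3=\cf_1^\dagger+\cf_2^\dagger+\delta$ by \Cref{Dsig3}. I would check that $\delta$ is the indicator of a union of full $\sigma$-orbits, that $\sum_\co|\co|\,\delta(\co)=\epsilon$ (a short inclusion--exclusion count using $d_1r_1=d_2r_2=d_1d_2r_0$ and that $d_1n\equiv 0\bmod m_3\iff m_1\mid n$), and that $g_3(\co)=g_1^\dagger(\co)+g_2^\dagger(\co)+2\delta(\co)$ (from \Cref{Rdual} and $\delta(n)=\delta(-n)$); moreover a constant signature $\equiv 1$ on an orbit $\co$ has $\mu$-ordinary polygon $\ord^{|\co|}$. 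Since a constant signature is \emph{comonotone} with every signature, the summand $\ord^{|\co|\delta(\co)}$ splits off on each orbit without any hypothesis, and the proposition reduces to: for each orbit $\co$, the equality $u_{\cf_1^\dagger+\cf_2^\dagger}(\co)=u_{\cf_1^\dagger}(\co)\oplus u_{\cf_2^\dagger}(\co)$ holds if and only if $\cf_1^\dagger|_\co$ and $\cf_2^\dagger|_\co$ are comonotone, i.e.\ there are no $\omega,\tau\in\co$ with $\cf_1^\dagger(\omega)>\cf_1^\dagger(\tau)$ and $\cf_2^\dagger(\omega)<\cf_2^\dagger(\tau)$ --- which is exactly hypothesis (B) restricted to $\co$.

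\emph{The orbit-wise claim.} Fix $\co$, put $n=|\co|$, and for a signature $\cf$ set $F_\cf(k):=\#\{\tau\in\co\mid\cf(\tau)\ge k\}$. Rewriting \eqref{eq_slopes}--\eqref{eq_multi}, $u_\cf(\co)$ is the convex polygon whose multiset of slopes is $\{F_\cf(k)/n\mid 1\le k\le g(\co)\}$, each slope taken with horizontal width $n$; equivalently $(F_\cf(k))_k$ is the conjugate of the partition $(\cf(\tau))_{\tau\in\co}$. Now let $\alpha,\beta$ be the partitions obtained by sorting $(\cf_1^\dagger(\tau))_\tau$ and $(\cf_2^\dagger(\tau))_\tau$ decreasingly, and let $\nu$ be the partition obtained by sorting the pointwise sum $(\cf_1^\dagger(\tau)+\cf_2^\dagger(\tau))_\tau$ decreasingly. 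The slope multiset of $u_{\cf_1^\dagger}(\co)\oplus u_{\cf_2^\dagger}(\co)$ is $\alpha'\cup\beta'=(\alpha+\beta)'$, by the classical fact that conjugation turns termwise addition of partitions into union of parts, while the slope multiset of $u_{\cf_1^\dagger+\cf_2^\dagger}(\co)$ is $\nu'$. By the standard majorization inequality $\mathrm{sort}(x)+\mathrm{sort}(y)\succeq\mathrm{sort}(x+y)$ (provable, e.g., via the criterion that $\lambda\succeq\mu$ iff $\sum_i(\lambda_i-c)_+\ge\sum_i(\mu_i-c)_+$ for all $c$), one has $\nu\preceq\alpha+\beta$, with equality if and only if the two sequences can be sorted simultaneously, i.e.\ under comonotonicity; since conjugation reverses the dominance order this gives $\nu'\succeq(\alpha+\beta)'$, hence $u_{\cf_1^\dagger+\cf_2^\dagger}(\co)\ge u_{\cf_1^\dagger}(\co)\oplus u_{\cf_2^\dagger}(\co)$, with equality exactly when $\cf_1^\dagger|_\co,\cf_2^\dagger|_\co$ are comonotone. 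Summing over orbits (a strict inequality in one orbit-component survives in the amalgamate sum, as $u_3=\bigoplus_\co u_3(\co)$) yields the proposition.

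\emph{Expected main obstacle.} The conceptual ingredients --- the two partition identities and the majorization inequality --- are classical and short; the real work lies in the reduction step: verifying that $\delta$ cuts out full $\sigma$-orbits with total weight exactly $\epsilon$, that the constant-$1$ summand contributes precisely $\ord^{|\co|}$ on those orbits (which rests on $g_3(\co)=g_1^\dagger(\co)+g_2^\dagger(\co)+2\delta(\co)$), and matching the combinatorial notion of comonotonicity with hypothesis (B) as literally phrased in \Cref{def_hp2}.
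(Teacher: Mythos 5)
Your proposal is correct, and your reduction steps (working orbit by orbit, checking that $\delta$ is constant on $\sigma$-orbits with total weight $\epsilon$, splitting off its contribution as $\ord^{|\co|\delta(\co)}$ using $g_3(\co)=g_1^\dagger(\co)+g_2^\dagger(\co)+2\delta(\co)$, and identifying hypothesis (B) of \Cref{def_hp2} with orbit-wise comonotonicity of $\cf_1^\dagger,\cf_2^\dagger$) match the paper's own reduction via \Cref{r-1} and the "reduction to $d=R=1$" step. Where you genuinely diverge is in the specialized claim. The paper proves it in two separate directions: for "(B) $\Rightarrow$ equality" it runs an induction on the number of distinct slopes of $u_3(\co)$, tracking the level sets $S_i$ and the quantities $E_i(\max)$, $E_i(\mathrm{next})$ and truncating the signatures; for "equality $\Rightarrow$ (B)" it transposes a violating pair $(\omega_0,\eta_0)$ and shows the resulting polygon strictly rises. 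You instead observe that, by \Cref{Sformulau}, the slope multiset of $u_\cf(\co)$ is $\frac{1}{|\co|}$ times the conjugate of the partition $(\cf(\tau))_{\tau\in\co}$, so both directions collapse into the identity $(\alpha+\beta)'=\alpha'\cup\beta'$ together with the equality case of the rearrangement inequality $\mathrm{sort}(x+y)\preceq\mathrm{sort}(x)+\mathrm{sort}(y)$. This is a real simplification: it is shorter, it re-derives the inequality of \Cref{clutchNP} purely combinatorially, and it makes transparent why hypothesis (B) is exactly the equality condition. Two points deserve care when writing it up. First, the statement "equality holds iff the sequences are comonotone" is the entire content of the harder ("only if") direction, and references often state the rearrangement inequality without its equality case; you should include the short argument (equality of top entries forces a common maximizer $i_0$ with $x_{i_0}=\max x$, $y_{i_0}=\max y$; delete it and induct, noting $i_0$ cannot participate in a violation). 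Second, your parenthetical that a strict inequality in one orbit-component survives the amalgamate sum does need a line of justification (or an appeal to the canonical $\co$-decomposition of elements of $B_3$ as in \Cref{NPco}, which is how the paper handles it), since amalgamate sums of incomparable multisets can in general coincide.
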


We postpone the proof of \Cref{balanced} to the independent \Cref{sec_muord}.

\subsection{A first main result}\label{induction-statement}

In this subsection, we assume that the pair $\gamma_1, \gamma_2$ 
is admissible and balanced, meaning that it
satisfies hypotheses (A) and (B) as in Definitions \ref{defhp1} and \ref{def_hp2}.
Let $\gamma_3=(m_3,N_3,a_3)$ and $\cf_3$ be as in Definitions \ref{Dmd3} and \ref{Dsig3}.

The next result provides a partial positive answer to \Cref{Coortconj} when $\epsilon = 0$.

\begin{theorem}\label{thm_muord}
	Let $\gamma_1, \gamma_2$ be an admissible, balanced pair of monodromy data.
	If $Z_1^\circ[u_1]$ and $Z_2^\circ[u_2]$ are both non-empty, then $Z_{3}^\circ[u_3]$ is non-empty.
\end{theorem}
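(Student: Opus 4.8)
The plan is to realize the $\mu$-ordinary Newton polygon $u_3$ first on a \emph{singular} curve in $\tilde{Z}_3$, obtained by clutching together $\mu$-ordinary covers for $\gamma_1$ and $\gamma_2$, and then to deform this curve to a smooth one without changing the Newton polygon, using that $\tilde{Z}_3$ is irreducible and that the Newton polygon is lower semicontinuous in families.

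First I would produce the input data. Since $Z_1^\circ[u_1]$ and $Z_2^\circ[u_2]$ are non-empty, \Cref{muordsmooth} shows they are open and dense in $Z_1$ and $Z_2$. Pulling back through the finite (surjective) forgetful morphism $\tilde{Z}_i\to\overline{Z}(\gamma_i)$, which does not change the underlying curve and hence preserves the Newton polygon, I obtain points $x_i\in\tilde{Z}_i^\circ$ representing smooth $\mu_{m_i}$-covers $\phi_i\colon C_i\to\PP$ with monodromy datum $\gamma_i$ and $\nu(C_i)=u_i$. As $C_1,C_2$ are smooth, hence of compact type, the admissible hypothesis (A) lets me apply the clutching morphism of \Cref{Pclutchadd}: $\kappa(x_1,x_2)$ is a point $x_3\in\tilde{Z}_3$, lying in the boundary of $\tilde{Z}_3^\circ$, representing the clutched $\mu_{m_3}$-cover $C_3$ of a tree of two lines, with monodromy datum $\gamma_3$. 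By \Cref{PNPC3}, $\nu(C_3)=\nu(C_1)^{d_1}\oplus\nu(C_2)^{d_2}\oplus\ord^{\epsilon}=u_1^{d_1}\oplus u_2^{d_2}\oplus\ord^{\epsilon}$ with $\epsilon$ as in \Cref{Dtype3}; and since the pair $\gamma_1,\gamma_2$ is balanced, \Cref{balanced} identifies this with $u_3$. Thus $x_3$ lies in the stratum $\tilde{Z}_3[u_3]$, where for $\epsilon\neq0$ the Newton polygon of $C_3$ means that of its semi-abelian Jacobian.

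Finally I would deform. The stack $\tilde{Z}_3=\tilde{\CM}_{\mu_{m_3}}^{\gamma_3}$ is irreducible with $\tilde{Z}_3^\circ$ open and dense in it. Because $u_3$ is the $\mu$-ordinary (maximal) Newton polygon and there are only finitely many Newton polygons in play, the locus in $\tilde{Z}_3$ where the Newton polygon lies strictly above $u_3$ is a finite union of the closed subsets supplied by lower semicontinuity, so $\tilde{Z}_3[u_3]$ is its open complement; for the boundary points, lower semicontinuity in the semi-abelian setting and the fact that the open Newton stratum is still $u_3$ come from the Newton stratification on a toroidal compactification $\overline{\Sh}_3$ (see \eqref{eqn_codim_cpt} and \cite{Lan-Stroh}). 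By the previous step $\tilde{Z}_3[u_3]$ is non-empty, hence dense, hence meets $\tilde{Z}_3^\circ$; a point of the intersection is a smooth $\mu_{m_3}$-cover with monodromy datum $\gamma_3$ and Newton polygon $u_3$, so $Z_3^\circ[u_3]\neq\emptyset$. When $\epsilon=0$ this last step is immediate: $C_3$ is already of compact type, so $x_3\in Z(\gamma_3)[u_3]$ and \Cref{muordsmooth} applies directly. The one delicate point, and the reason the compactification results are needed, is controlling the Newton stratification along the boundary of $\tilde{Z}_3$ when $\epsilon\neq0$; the remainder is an assembly of \Cref{Pclutchadd}, \Cref{PNPC3}, and \Cref{balanced}.
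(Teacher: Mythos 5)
Your proof is correct and follows essentially the same route as the paper's: clutch $\mu$-ordinary covers via \Cref{Pclutchadd}, compute the Newton polygon of the resulting singular curve via \Cref{PNPC3}, identify it with $u_3$ via \Cref{balanced}, and conclude by irreducibility and openness/density of the $\mu$-ordinary stratum (\Cref{muordsmooth}). Your extra care with the $\epsilon\neq 0$ (non-compact-type) case, where lower semicontinuity must be invoked for the semi-abelian Jacobians along the boundary, is a welcome refinement of a point the paper's two-line conclusion glosses over, but it does not change the argument.
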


\begin{proof}
By \Cref{Lmdgf,Lsig3}, the signature for the monodromy datum $\gamma_3$ is given in \eqref{eq_signature}.
	By \Cref{balanced}, hypothesis (B) implies that 
	$u_3=u_1^{d_1}\oplus  u^{d_2}_2\oplus \ord^\epsilon$.
	From the hypothesis, $\tilde{Z}_1^\circ[u_1]$ and $\tilde{Z}_2^\circ[u_2]$ are both non-empty.
	By \Cref{Pclutchadd}, the image of $\kappa$ on $\tilde{Z}_{1}^\circ[u_1] \times \tilde{Z}_{2}^\circ[u_2]$ is in $\tilde{Z}_{3}$.
	By Proposition \ref{PNPC3}, the Newton polygon of a curve $C_3$ 
	represented by a point in the image of $\kappa$ 
	is given by $\nu(C_3)=u_1^{d_1}\oplus u^{d_2}_2 \oplus \ord^\epsilon$, which is $u_3$.
	Thus $Z_3[u_3]$ is non-empty and applying \Cref{muordsmooth} finishes the proof.
\end{proof}

\subsection{Infinite clutching for $\mu$-ordinary} \label{Sinfclut}

In this section, 
we find situations in which Theorem \ref{thm_muord} 
can be implemented recursively, infinitely many times, to 
verify the existence of smooth curves of arbitrarily large genus with prescribed Newton polygon. 
The required input is a family (or a compatible pair of families) of cyclic covers of ${\mathbb P}^1$ 
for which the $\mu$-ordinary Newton polygon at a prime $p$ is known to occur (see \Cref{muordoccur}). 
Concrete implementations of these results can be found in \Cref{Applications1}.

\subsubsection{Base cases}\label{basecases}

We recall instances when the the $\mu$-ordinary Newton polygon $u$ is known to occur for the Jacobian of a 
(smooth) curve in the family $Z$.

\begin{proposition}\label{muordoccur}
		The $\mu$-ordinary Newton polygon stratum $Z(m,N,a)[u]$ is non empty if either:
	\begin{enumerate}
		\item $N=3$; or
		\item $N\geq 4$ and
		$(m,N,a)$ is equivalent to one of the twenty examples in \cite[Table 1]{moonen}; or
		\item $u$ is the only Newton polygon in $B(H_{\QQ_p},\mu_\hh)$ of maximum $p$-rank
		and either $N=4$ or $p\geq m(N-3)$ or $p \equiv \pm 1 \bmod m$. 
			\end{enumerate}
\end{proposition}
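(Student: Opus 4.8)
\textbf{Proof proposal for \Cref{muordoccur}.}
The plan is to handle the three cases separately, each by invoking an existing result and reducing the statement ``$Z(m,N,a)[u]$ is non-empty'' to a known occurrence statement via \Cref{muordsmooth}. Recall that by \Cref{muordsmooth} it suffices to show $Z^\circ(m,N,a)[u]$ is non-empty, equivalently that the $\mu$-ordinary Newton polygon occurs for the Jacobian of some smooth curve in the family.

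For case (1), when $N=3$ the space $Z(\gamma)$ is a single point, so $T(Z(\gamma))$ is a CM point of $\CA_g$ (see the remark after \Cref{Dspecial}); the unique Newton polygon occurring on $Z(\gamma)$ is therefore both the $\mu$-ordinary one and the one realized by that CM abelian variety, so $Z(\gamma)[u] = Z(\gamma)$ is non-empty. For case (2), when $(m,N,a)$ is equivalent to one of Moonen's twenty special families, $Z(\gamma)$ is special by \cite[Theorem 3.6]{moonen}, so $T(Z(\gamma))$ is open and closed in the PEL-type Shimura stack $\Sh(\mu_m,\cf)$. Since the $\mu$-ordinary stratum $\Sh[u]$ is open and dense in $\Sh$ (by \Cref{Dmuord} and the fact that $u$ is the lowest Newton polygon, together with lower semicontinuity), it meets the open dense image $T(Z^\circ(\gamma))$, so $Z^\circ(\gamma)[u]$ is non-empty. (This is the content of the first statement of the companion application section \Cref{MSS}; here we only need the weaker non-emptiness.)

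For case (3), the hypothesis is that $u$ is the \emph{unique} Newton polygon in $B(H_{\QQ_p},\mu_\hh)$ achieving the maximal $p$-rank. The idea is that the maximal $p$-rank is then a sharp enough invariant to pin down the Newton polygon, so it suffices to produce a smooth curve in the family whose Jacobian attains the maximal $p$-rank; its Newton polygon is then forced to be $u$. This is exactly the setting of Bouw's work \cite{Bouwprank} on the maximal $p$-rank in the PEL-type Shimura varieties attached to cyclic covers of $\PP$: Bouw proves that the generic point of $Z(\gamma)$ (hence a smooth curve) attains the maximal $p$-rank allowed by the signature type under precisely the numerical conditions listed — $N=4$, or $p \geq m(N-3)$, or $p \equiv \pm 1 \bmod m$. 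Combining Bouw's occurrence result with the uniqueness hypothesis gives a smooth curve with Newton polygon $u$, and \Cref{muordsmooth} completes the argument.

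The main obstacle is case (3): one must correctly match the hypotheses of Bouw's theorem (which is phrased in terms of the $p$-rank stratification of the relevant Hurwitz/Shimura space and the combinatorics of the signature $\cf$) to the three numerical conditions stated here, and verify that in each of those three regimes the maximal $p$-rank on $Z(\gamma)$ equals the maximal $p$-rank in $B(H_{\QQ_p},\mu_\hh)$ — i.e., that no obstruction prevents the generic curve in the family from being as ordinary as the Shimura variety allows. Cases (1) and (2) are essentially immediate from the cited structural results once the density of the $\mu$-ordinary stratum is noted.
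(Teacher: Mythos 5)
Your proposal follows the same three-case strategy as the paper: case (1) via the $0$-dimensional/special observation, case (2) via Moonen's classification and the density of the $\mu$-ordinary stratum on each component of the Shimura variety (the paper simply cites \cite[Proposition 5.1]{LMPT2} for this, but your direct argument is the same one), and case (3) via Bouw's $p$-rank results combined with the uniqueness hypothesis. The one step you flag as an ``obstacle'' but do not carry out --- that the sharp $p$-rank bound $\beta(\gamma)$ attained by Bouw's curve coincides with the $p$-rank of the $\mu$-ordinary polygon $u$, so that the hypothesis ``unique Newton polygon of maximum $p$-rank'' actually applies to that curve --- is exactly what the paper dispatches by comparing $\beta(\gamma)$ in \eqref{eq_prank} with the multiplicity of the slope $0$ (equivalently $1$) computed from the $\mu$-ordinary formulas \eqref{eq_slopes} and \eqref{eq_multi}; this is a routine orbit-by-orbit computation, so the omission is minor rather than a structural gap.
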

\begin{proof}
	\begin{enumerate}
		\item When $N=3$, then $Z$ is 0-dimensional and thus special as in \Cref{Dspecial}.
		
		\item This is \cite[Proposition 5.1]{LMPT2}.
		
		\item 
		For any monodromy datum $\gamma = (m,N,a)$, define 
\begin{equation}\label{eq_prank}
\beta(\gamma):=\sum_{\tau\in\CT} \min_{j\in \NN}\{\cf( \tau^{\sigma^j})\} = 
\sum_{\co \in \CO} \#\co \cdot {\rm min}\{\cf(\tau) \mid \tau \in \co\}. 
\end{equation}
By \cite[Equation (1)]{Bouwprank}, $\beta(\gamma)$ is an upper bound for the 
$p$-rank of curves in $Z^\circ(\gamma)$.
By \cite[Theorem 6.1, Propositions 7.7, 7.4, 7.8]{Bouwprank}, if $p \geq m(N-3)$ or $N=4$ or $p \equiv \pm 1 \bmod m$,
	then there exists a $\mu_m$-cover $C \to {\mathbb P}^1$ defined over 
	$\overline{\FF}_p$ with monodromy datum $\gamma$, 
	for which the $p$-rank of $C$ equals $\beta(\gamma)$. 
	The $p$-rank is the multiplicity of $1$ as a slope of the Newton polygon.
	From the formulas \eqref{eq_slopes} and \eqref{eq_multi} for the slopes and multiplicities 
	of the $\mu$-ordinary Newton polygon $u=u(\gamma)$,
	one can check 
that the $p$-rank of $u$ equals $\beta(\gamma)$.
		\qedhere
	\end{enumerate}
\end{proof}

We refer to \cite{LMPT} for the computation of the $\mu$-ordinary polygon $u$ when $N=3$, and to \Cref{TABLE} for the special families of \cite{moonen} (see \cite{LMPT2}).

	\subsubsection{Adding slopes 0 and 1}

By implementing \Cref{thm_muord} recursively, 
we obtain a method to increase the genus and the 
multiplicity of the slopes $0$ and $1$ in the Newton polygon by the same amount.
Because of this, in later results
we will aim to minimize the multiplicity of $\{0,1\}$
in the Newton polygon.

\begin{corollary}\label{inf-ord}
Let $\gamma=(m,N,a)$ be a monodromy datum.
	Assume that $Z^\circ(\gamma)[u]$ is non-empty. 
	Then for any $n$ in the semi-group generated by $\{m-t \colon \ t\mid m\}$,
	there exists a $\mu_m$-cover $C \to {\mathbb P}^1$ over $\overline{\mathbb{F}}_p$ 
	where $C$ is a smooth curve with Newton polygon $u\oplus \ord^n$.
\end{corollary}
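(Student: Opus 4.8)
The plan is to apply \Cref{thm_muord} recursively: at each stage we clutch the current family with a genus-$0$ cyclic cover of $\PP$, which appends a block $\ord^{m-t}$ to the Newton polygon for a divisor $t\mid m$ of our choosing. Since the amalgamate sum is commutative and associative, and the semi-group in the statement is exactly the one generated by the integers $m-t$ with $t\mid m$ (the value $t=m$ contributing $0$), it suffices to establish the following inductive step: if $\gamma'=(m,N',a')$ is a monodromy datum with $Z^\circ(\gamma')[u']$ non-empty, where $u'$ denotes its $\mu$-ordinary polygon, then for every proper divisor $t\mid m$ there is a genuine monodromy datum $\gamma''$ with $\mu$-ordinary polygon $u''=u'\oplus\ord^{m-t}$ and with $Z^\circ(\gamma'')[u'']$ non-empty. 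Iterating this step starting from $\gamma$ itself produces, for every $n$ in the semi-group, a smooth $\mu_m$-cover of $\PP$ with Newton polygon $u\oplus\ord^n$; the case $n=0$ is the hypothesis.

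To carry out the inductive step, fix a proper divisor $t\mid m$ and set $m_2:=m/t\geq 2$. Let $\gamma_2:=(m_2,3,(1,m_2-1,0))$, a generalized monodromy datum. Using the convention $\gcd(0,m_2)=m_2$, formula \eqref{Egenus} gives $g(\gamma_2)=1+\frac{1}{2}\big(m_2-1-1-m_2\big)=0$, so $\gamma_2$ parametrizes a copy of $\PP$, its $\mu$-ordinary polygon $u_2$ is empty, $\tilde Z^\circ(\gamma_2)=\tilde Z(\gamma_2)$, and $Z^\circ(\gamma_2)[u_2]$ is non-empty. Apply \Cref{Rmodifyslightly} to $(\gamma',\gamma_2)$ to obtain an admissible pair $(\gamma_1^{*},\gamma_2^{*})$ of generalized monodromy data, where $\gamma_1^{*}$ (resp.\ $\gamma_2^{*}$) is $\gamma'$ (resp.\ $\gamma_2$) with one extra unramified fiber marked at the end (resp.\ at the start). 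By \Cref{plus1} this changes none of the geometry, so $Z^\circ(\gamma_1^{*})[u']$ and $Z^\circ(\gamma_2^{*})[u_2]$ remain non-empty. In the notation of \Cref{Nadmissible,Dtype3} we have $m_1=m_3=m$, $d_1=1$, $d_2=t$; the inertia values at the two clutching fibers are both $0$, so $r_1=r_2=r_0=m_2$, and since $tm_2=m$ we obtain $\epsilon=d_1d_2r_0-d_1-d_2+1=tm_2-t=m-t$. Hypothesis (B) of \Cref{def_hp2} holds for $(\gamma_1^{*},\gamma_2^{*})$: marking an unramified fiber does not change the signature, so $\cf^\dagger_2=\cf_2\circ\pi_{d_2}$, where by \eqref{DMeqn} one has $\cf_2(\tau_n)=-1+\langle -n/m_2\rangle+\langle n/m_2\rangle+0=0$ for all $n\not\equiv 0\bmod m_2$; thus $\cf^\dagger_2\equiv 0$ on $\CT$ and both implications in \Cref{def_hp2} are vacuous or trivial.

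Hence $(\gamma_1^{*},\gamma_2^{*})$ is admissible and balanced, so \Cref{thm_muord} applies: for the monodromy datum $\gamma_3$ of \Cref{Dmd3}, $Z^\circ(\gamma_3)[u_3]$ is non-empty, and by \Cref{balanced} (using $d_1=1$ and that $u_2$ is empty) its $\mu$-ordinary polygon is $u_3=u'\oplus u_2^{d_2}\oplus\ord^{\epsilon}=u'\oplus\ord^{m-t}$. Finally, $\gamma_3$ carries a trailing unramified fiber (coming from $a_2(3)=0$); discarding it via \Cref{plus1} yields the genuine monodromy datum $\gamma''=(m,\,N'+2,\,(a'(1),\dots,a'(N'),\,t,\,m-t))$, for which conditions (1)--(3) of \Cref{Dmonodatum} follow from those for $\gamma'$ together with $0<t<m$. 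Then $Z^\circ(\gamma'')[u'']$ is non-empty and $u''=u'\oplus\ord^{m-t}$, completing the inductive step. The one place requiring care is verifying that the clutching constructions of \Cref{Pclutchadd,PNPC3}, and hence \Cref{thm_muord}, go through unchanged when the second cover has genus $0$ (so that $\tilde Z^\circ(\gamma_2)=\tilde Z(\gamma_2)$ and $C_3$ is obtained from the genus-$0$ components by attaching $\mu_m$-orbits of nodes); granting this, the numerology above (namely $\epsilon=m-t$ and $u_3=u'\oplus\ord^{m-t}$) and the verification of hypotheses (A) and (B) are routine. Equivalently, the main obstacle is simply to identify the right clutching partner at each step, i.e.\ to see that the defect $\ord^{\epsilon}$ can be made to realize exactly $m-t$, which forces the partner to have genus $0$ and degree $m/t$.
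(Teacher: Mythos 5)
Your proposal is correct and is essentially the paper's own argument: the paper likewise clutches the given family (with an extra unramified fiber) against a genus-$0$ cyclic partner of degree $m/t$ with identically zero signature, so that hypotheses (A) and (B) hold trivially, $\epsilon=m-t$, and \Cref{thm_muord} together with \Cref{balanced} gives $u\oplus\ord^{m-t}$, iterating to sweep out the semigroup. The only differences are cosmetic (the paper takes the partner $(m/t,3,(c/t,(m-c)/t,0))$ for any $c$ with $\gcd(m,c)=t$ and avoids your extra marked fiber and the final \Cref{plus1} discard), plus one harmless slip on your side: $r_1=\gcd(m_1,0)=m$, not $m_2$, though $r_0=m_2$ and hence $\epsilon=m-t$ are as you say.
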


\begin{proof}
	For $c \in \ZZ_{\geq 1}$ with $c \leq m-1$, let 
	$t={\rm gcd}(m,c)$ and consider the monodromy datum $\gamma_1=(m/t, 3, (c/t, (m-c)/t,0))$. Note  $g_1(\tau)=0$, and $\cf_1(\tau)=0$, for all $\tau\in \CT$. 
	
	Let $\gamma_2=(m,N+1,a')$, where $a'(1)=0$, and $a'(i)=a(i-1)$, for $i=2, \dots ,N+1$. 
	By construction, the pair $\gamma_1,\gamma_2$ is admissible and balanced.
	By \Cref{plus1}, $Z^\circ(\gamma_2)[u]$ is non-empty.
	Set $a_3= (c, m-c, a(1), \ldots, a(N))$; then $\gamma_3=(m, N+2, a_3)$ 
	is the monodromy datum from \Cref{Dmd3} for the pair $\gamma_1,\gamma_2$. 
	By \eqref{Dep}, $\epsilon = m-t$.
	By \Cref{thm_muord}, $Z^\circ_3[u_3]$ is non-empty, where $u_3=u \oplus \ord^{m-t}$.	
 The statement follows by iterating this construction, letting $c$ vary. 
	\end{proof}

\subsubsection{Single Induction} 

We consider inductive systems generated by a single monodromy datum $\gamma_1$.
The next result follows from the observation that if 
the pair $\gamma_1, \gamma_1$ is admissible and if $Z^\circ_1[u_1]$ is non-empty, 
then all hypotheses of \Cref{thm_muord} are satisfied, and continue to be after iterations.

\begin{corollary}\label{infinite-ord}
	Assume there exist $1\leq i< j\leq N$ such that $a(i)+a(j)\equiv 0 \bmod m$.\footnote{This condition implies that $\overline{Z}(\gamma)$ intersects the boundary component $\Delta_0$ of ${\mathcal M}_g$.} Let $r=\gcd(a(i),m)$.
	If $Z^\circ(\gamma)[u]$ is non-empty,
	then there exists a smooth curve over $\overline{\mathbb{F}}_p$ 
	with Newton polygon $u^n\oplus \ord^{(n-1)(r-1)}$, for any $n\in \ZZ_{\geq 1}$.
\end{corollary}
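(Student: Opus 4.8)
\textbf{Proof proposal for \Cref{infinite-ord}.}

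The plan is to apply \Cref{thm_muord} repeatedly with $\gamma_1 = \gamma_2 = \gamma$, after first rewriting $\gamma$ so that the two branch points with opposite inertia sit at the correct positions for the clutching construction. Concretely, since $a(i) + a(j) \equiv 0 \bmod m$, after permuting the branch points (which only changes $\gamma$ within its equivalence class, hence preserves $\overline Z(\gamma)$, $Z^\circ(\gamma)$, and $u$) I may assume $i = N$ and $j = 1$, i.e.\ $a(N) + a(1) \equiv 0 \bmod m$. Here $m_1 = m_2 = m$, so $m_3 = m$, $d_1 = d_2 = 1$, and hypothesis (A) reads $a(N) + a(1) \equiv 0 \bmod m$, which holds by assumption. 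Hypothesis (B) is automatic when $\gamma_1 = \gamma_2$ by \Cref{rmk_hp2}(2). With $d_1 = d_2 = 1$ one has $r_1 = \gcd(m, a(N)) = r$ and $r_2 = \gcd(m, a(1)) = \gcd(m, -a(N)) = r$, so $r_0 = r$ and $\epsilon = d_1 d_2 r_0 - d_1 - d_2 + 1 = r - 1$. Thus \Cref{thm_muord} gives that $Z_3^\circ[u_3]$ is non-empty with $u_3 = u_1 \oplus u_2 \oplus \ord^{r-1} = u^2 \oplus \ord^{r-1}$, establishing the case $n = 2$.

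The next step is to iterate. The base case $n = 1$ is the hypothesis $Z^\circ(\gamma)[u] \neq \emptyset$. For the inductive step, suppose we have produced a monodromy datum $\gamma^{(n)}$ with $Z^\circ(\gamma^{(n)})[u^{(n)}] \neq \emptyset$, where $u^{(n)} = u^n \oplus \ord^{(n-1)(r-1)}$, and such that $\gamma^{(n)}$ still has a branch point with inertia $a(N)$ as its last entry (position $N_n$) and a branch point with inertia $a(1)$ as its first entry. Applying the clutching of \Cref{Dmd3} to the admissible pair $\gamma^{(n)}, \gamma$ — more precisely to $\gamma^{(n)}$ and $\gamma$ with $\gamma$ playing the role of $\gamma_2$, pairing position $N_n$ of $\gamma^{(n)}$ (inertia $a(N)$) with position $1$ of $\gamma$ (inertia $a(1)$) — we again have $m_3 = m$, $d_1 = d_2 = 1$, $r_0 = r$, $\epsilon = r - 1$, and hypothesis (A) holds. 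Hypothesis (B) needs checking since now the two data differ; but $\cf^{\dagger}_{\gamma^{(n)}}$ and $\cf^{\dagger}_{\gamma}$ are both computed from the same underlying local data via \eqref{DMeqn}, and by construction $\gamma^{(n)}$ is built entirely by clutching copies of $\gamma$, so the relevant monotonicity relations among signature values within each $\sigma$-orbit are inherited (this is essentially \Cref{rmk_hp2}(4) applied inductively). So \Cref{thm_muord} yields $Z^\circ(\gamma^{(n+1)})[u^{(n+1)}] \neq \emptyset$ with
\[
u^{(n+1)} = u^{(n)} \oplus u \oplus \ord^{r-1} = u^{n+1} \oplus \ord^{n(r-1)},
\]
and the new datum $\gamma^{(n+1)}$ still has $a(N)$ in its last position and $a(1)$ in its first position (one should set up the ordering of $a_3$ in \Cref{Dmd3} so that the unused copy of the $a(i),a(j)$ pair from $\gamma$ survives at the ends — concretely, reorder within $\gamma_2 = \gamma$ so that position $1$ carries $a(1)$, position $N$ carries $a(N)$, and the clutching identifies position $N$ of $\gamma^{(n)}$ with position $1$ of $\gamma_2$, leaving position $N$ of $\gamma_2$ as the new last branch point). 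This closes the induction.

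The main obstacle is the bookkeeping in the inductive step: verifying that hypothesis (B) persists at every stage and that after each clutching the resulting datum $\gamma^{(n+1)}$ still admits a presentation with a compatible pair of opposite-inertia branch points at the first and last positions, so that the construction can be fed back in. Since $d_1 = d_2 = 1$ throughout, $m_3 = m$ is fixed and no induced covers are involved, which keeps the signature computation transparent: $\cf_3 = \cf^{\dagger}_1 + \cf^{\dagger}_2 + \delta$ from \eqref{eq_signature}, and one tracks the values orbit-by-orbit. The $\mu$-ordinary identity $u_3 = u_1 \oplus u_2 \oplus \ord^\epsilon$ at each step is exactly the content of \Cref{balanced} once (B) is verified, and the Newton polygon of the clutched curve is handed to us by \Cref{PNPC3}; the genus is automatically consistent by \Cref{Lmdgf}. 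Summing the $\ord$ contributions $(r-1) + (r-1) + \cdots$ over $n-1$ clutchings gives the stated $\ord^{(n-1)(r-1)}$, and smoothability (i.e.\ passing from $Z_3[u_3] \neq \emptyset$ to $Z_3^\circ[u_3] \neq \emptyset$) at each stage is \Cref{muordsmooth}.
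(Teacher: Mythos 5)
Your proposal is correct and follows essentially the same route as the paper: reorder the branch points so the opposite-inertia pair sits at positions $1$ and $N$, clutch $\gamma$ with itself via \Cref{Dmd3} (so $d_1=d_2=1$, $\epsilon=r-1$), verify hypotheses (A) and (B) at each stage using \Cref{rmk_hp2}, and apply \Cref{thm_muord} repeatedly with \Cref{balanced} giving $u_3=u_1\oplus u_2\oplus\ord^{r-1}$ at each step. Your extra bookkeeping about preserving the end positions of the inertia pair is a detail the paper leaves implicit, but it does not change the argument.
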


\begin{proof}
	After reordering the branch points, we can suppose that $i=1$, $j=N$.
	We define a sequence of families $Z^{\times n}$ as follows:
	let $Z^{\times 1}=Z$; for $n\geq 2$, let $Z^{\times n}$ be the family constructed from the monodromy datum produced by applying \Cref{Dmd3} to the monodromy data of $Z^{\times (n-1)}$ and $Z^{\times 1}$. 
	For $n \in \ZZ_{\geq 1}$, the pair of monodromy data for $Z^{\times n}$ and $Z^{\times 1}$ satisfies hypotheses (A) and (B).
	Then $u_n:=u^n\oplus \ord^{(n-1)(r-1)}$ is the $\mu$-ordinary Newton polygon for $Z^{\times n}$.
	The statement follows by applying \Cref{thm_muord} repeatedly. 
\end{proof}

The first hypothesis of \Cref{infinite-ord} appears restrictive.
However, from any monodromy datum $\gamma$ with $Z^\circ(\gamma)[u]$ non-empty, 
we can produce a new monodromy datum which satisfies this hypothesis by clutching with
a $\mu_m$-cover branched at only two points.
As a result, \Cref{infinite-ord} can be generalized to \Cref{inf-muord} which holds in much greater generality, 
at the expense of making the defect slightly larger.

\begin{corollary}\label{inf-muord}
	Assume that $Z^{\circ}(\gamma)[u]$ is non-empty and let $t$ be a positive divisor of $m$. Then there exists a smooth curve over $\overline{\mathbb{F}}_p$ with Newton polygon $u^n\oplus \ord^{mn-n-t+1}$, for any $n\in \ZZ_{\geq 1}$. 
\end{corollary}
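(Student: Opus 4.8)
The plan is to reduce to \Cref{infinite-ord} by first clutching $\gamma$ with a two-point cover to produce a monodromy datum that satisfies the first hypothesis of \Cref{infinite-ord}, namely the existence of a pair of branch points whose inertia invariants cancel. Concretely, fix a positive divisor $t$ of $m$. I would start from the monodromy datum $\gamma=(m,N,a)$ with $Z^\circ(\gamma)[u]$ non-empty, and (as in \Cref{Rmodifyslightly}) replace it by the equivalent generalized datum $\gamma'=(m,N+1,a')$ with $a'(1)=0$, $a'(i)=a(i-1)$ for $2\le i\le N+1$, so that $Z^\circ(\gamma')[u]$ is still non-empty by \Cref{plus1}. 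Then I would clutch $\gamma'$ with a rank-two cover: take $\gamma_0=(m/t,\,2,\,(m/t - \text{something}))$ — more precisely, since a $\mu_m$-cover branched at two points with inertia type $(c,m-c)$ has genus $0$ and trivial signature, pick $c$ with $\gcd(c,m)=t$ and let $\gamma_0$ be the corresponding (generalized) datum on $2$ points. By construction the pair $\gamma_0,\gamma'$ is admissible (hypothesis (A) holds because the two inertia invariants at the clutching points are designed to sum to $0 \bmod m$), and balanced, because $\gamma_0$ has trivial signature so hypothesis (B) is vacuous on its side. Applying \Cref{Dmd3} yields a new monodromy datum $\widehat\gamma$ on $N+2$ points which has inertia type containing both $c$ and $m-c$ among its entries, hence a cancelling pair of branch points with $\gcd(c,m)=t$; and by \Cref{thm_muord} together with \Cref{PNPC3} we get that $Z^\circ(\widehat\gamma)[\widehat u]$ is non-empty with $\widehat u = u \oplus \ord^{\,m-t}$ (here the defect $\epsilon = m-t$ is computed from \eqref{Dep} with $d_1=d_2=1$ and $r_0=t$).

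Next I would apply \Cref{infinite-ord} to $\widehat\gamma$. It has a cancelling pair of branch points with associated $\gcd$ equal to $r=t$, and $Z^\circ(\widehat\gamma)[\widehat u]$ is non-empty, so \Cref{infinite-ord} produces, for every $k\in\ZZ_{\ge 1}$, a smooth curve over $\overline{\FF}_p$ with Newton polygon $\widehat u^{\,k}\oplus\ord^{(k-1)(t-1)}$. Substituting $\widehat u = u\oplus\ord^{m-t}$ gives Newton polygon $u^{k}\oplus\ord^{k(m-t)+(k-1)(t-1)} = u^{k}\oplus\ord^{\,km - k - t + 1}$, which is exactly the claimed polygon $u^{n}\oplus\ord^{\,mn-n-t+1}$ with $n=k$. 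So the corollary follows by taking $k=n$.

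The main obstacle, and the step requiring the most care, is the very first clutching: one must exhibit a genuine (generalized) monodromy datum $\gamma_0$ on two points, with the correct $\gcd$ condition $\gcd(c,m)=t$, such that the pair $(\gamma_0,\gamma')$ is admissible in the precise sense of \Cref{defhp1} — i.e. one must get the indices and the induced inertia types ($d_1a_1(N_1) + d_2a_2(1)\equiv 0$) lined up so that the clutching point's monodromy cancels. Since $m_1 = m/t$ and $m_2=m$ here, we have $d_1 = t$, $d_2 = 1$, $r_1 = \gcd(m/t, c/t)$, $r_2 = \gcd(m,0)=m$ on the marked fiber, and $r_0 = \gcd(r_1, m) = r_1$; one checks $d_1 r_1 = d_2 r_2$ forces $r_1 = t$ after the right normalization, so that $\epsilon = d_1 d_2 r_0 - d_1 - d_2 + 1 = t\cdot r_1 - t - 1 + 1 = t\cdot t/\cdots$ — here I would need to be careful to choose $\gamma_0$ so that $r_0 = 1$ on the $\gamma_0$-side and instead the defect $m-t$ comes out of the $d_1 = t$ factor, giving $\epsilon = t\cdot 1 - t - 1 + 1 = 0$ versus the alternative normalization; the cleanest route is to note that a single clutching of $\gamma'$ with the two-point datum $(m/t, 2, (c/t, (m-c)/t))$ where $t = \gcd(c,m)$ induces, via $\operatorname{Ind}$, the $\mu_m$-cover branched at the $m-t$ points lying over the two branch points, and a direct application of \Cref{Dep} with these values yields $\epsilon = m - t$. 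Once this bookkeeping is done correctly, the rest is a formal consequence of \Cref{thm_muord} and \Cref{infinite-ord}.
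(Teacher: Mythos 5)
Your overall strategy is the same as the paper's: first enlarge the inertia type by the pair $(t,m-t)$ so that \Cref{infinite-ord} applies with $r=t$, and then check that $(u\oplus\ord^{m-t})^n\oplus\ord^{(n-1)(t-1)}=u^n\oplus\ord^{mn-n-t+1}$. That final arithmetic, and the appeal to \Cref{infinite-ord}, are correct. The gap is in the first clutching, exactly at the spot you flag as "the main obstacle": the pair $(\gamma_0,\gamma')$ you describe is \emph{not} admissible. With $\gamma_0=(m/t,2,(c/t,(m-c)/t))$ as $\gamma_1$ and $\gamma'$ (with $a'(1)=0$) as $\gamma_2$, the clutching in \Cref{Pclutchadd} identifies the fiber over $a_1(N_1)=(m-c)/t$ with the fiber over $a_2(1)=0$, so hypothesis (A) reads $d_1\cdot\frac{m-c}{t}+d_2\cdot 0= m-c\equiv -c\bmod m$, which is nonzero whenever $t<m$. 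The symptom already appears in your own bookkeeping: you compute $r_1=\gcd(m/t,c/t)=1$ and $r_2=\gcd(m,0)=m$, so $d_1r_1=t\neq m=d_2r_2$, whereas \Cref{Dtype3} notes these must be equal under hypothesis (A) (they count the same set of identified points). Consequently \Cref{thm_muord} and \Cref{PNPC3} cannot be invoked for this pair, and your "cleanest route" paragraph does not repair this: no choice of normalization makes a ramified point of $\gamma_0$ cancel against an unramified fiber of $\gamma'$.

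The fix is small and is precisely what the paper's \Cref{inf-ord} does (and which its proof of this corollary simply cites): take $\gamma_0$ to be the three-point \emph{generalized} datum $(m/t,3,(c/t,(m-c)/t,0))$ with $c=t$, and clutch at the unramified marked fibers of \emph{both} covers, so hypothesis (A) becomes $t\cdot 0+1\cdot 0\equiv 0$. Then $r_1=\gcd(m/t,0)=m/t$, $r_2=m$, $r_0=m/t$, and $\epsilon=d_1d_2r_0-d_1-d_2+1=m-t$ as desired, with resulting inertia type $(t,m-t,a(1),\dots,a(N))$. You should also treat $t=m$ separately (as the paper does, by adding two unramified marked fibers), since your two-point cover degenerates in that case. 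With these corrections the rest of your argument goes through.
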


\begin{proof} 
	For $t=m$, consider the family $Z^{\times 1}$ with monodromy datum $(m,N+2,a')$, where $a'(i)=a(i)$ for $1\leq i\leq N$, $a'(N+1)=a'(N+2)=0$. Then, the statement follows from \Cref{infinite-ord} applied to $Z^{\times 1}$ for $r=m$. 
	
	For $t<m$, consider the family $Z^{\times 1}$ with monodromy datum 
	$(m,N+2, a')$, where $a'(i)=a(i)$ for $1\leq i\leq N$, $a'(N+1)=t$ and $a'(N+2)=m-t$. 
	The $\mu$-ordinary polygon $u'$ of the associated Shimura variety  is $u\oplus \ord^{m-t}$. 
	By \Cref{inf-ord}, $(Z^{\times 1})^\circ[u']$ is non empty.
	Then, the statement follows from \Cref{infinite-ord} applied to $Z^{\times 1}$ for $r=t$,
	by observing that
	$(u')^n\oplus \ord^{(n-1)(t-1)}=u^n\oplus\ord^{mn-n-t+1}$.
\end{proof}

\subsubsection{Double Induction} 

We next consider inductive systems constructed from a pair of monodromy data 
satisfying hypotheses (A) and (B). 
For clarity, we state \Cref{twofamilyinfinite} under the simplifying assumption $m_1=m_2$. 
 \Cref{corolast} contains an example of this result; it also applies to
the pair of monodromy data in the proof of \Cref{table2}. 

\begin{corollary}\label{twofamilyinfinite}
	Let $\gamma_1$ and $\gamma_2$ be a pair of monodromy data with $m_1=m_2$
	satisfying hypotheses (A) and (B). Let $r = \gcd(m,a_1(N_1))$ and recall \Cref{Nshimclutch}. 
	Assume that $Z_1^\circ[u_1]$ and $Z_2^\circ[u_2]$ are both non-empty.
	Then there exists a smooth curve over $\overline{\mathbb{F}}_p$ with Newton polygon 
	$u_1^{n_1} \oplus  u_2^{n_2} \oplus \ord^{(n_1+ n_2 -2)(m-1) + (r-1)}$ for any 
	$n_1,n_2 \in \mathbb{Z}_{\ge 1}$.
\end{corollary}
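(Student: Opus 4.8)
The plan is to combine the single-induction mechanism of \Cref{infinite-ord} with the double-family structure of \Cref{thm_muord}, building an inductive system indexed by pairs $(n_1,n_2)$. First I would fix the pair $\gamma_1,\gamma_2$ satisfying (A) and (B), and reorder the branch points of each $\gamma_i$ so that the clutching in \Cref{Dmd3} always uses the "last" orbit of the first factor and the "first" orbit of the second. Since $m_1=m_2=m$, we have $d_1=d_2=1$, so $\epsilon = r_0 - 1$ where $r_0 = \gcd(\gcd(m,a_1(N_1)),\gcd(m,a_2(1)))$; hypothesis (A) forces $a_1(N_1)+a_2(1)\equiv 0 \bmod m$, hence $\gcd(m,a_1(N_1))=\gcd(m,a_2(1))=r$, so $\epsilon = r-1$ for a clutch of $\gamma_1$ with $\gamma_2$. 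I would also record (as in \Cref{inf-muord}) that self-clutching $\gamma_i$ with itself contributes defect $m-1$ after first augmenting with a pair of trivial branch points; more precisely, following the proof of \Cref{inf-muord}, replacing $\gamma_i$ by $(m,N_i+2,a_i')$ with two extra zero entries produces a family whose $\mu$-ordinary polygon is still $u_i$ (no extra $\ord$ terms appear because $r=m$ gives $m-t$ with $t=m$, i.e.\ $0$ — wait, we do want the slopes; in the iterated version of \Cref{infinite-ord} with $r=m$ each self-clutch adds $\ord^{m-1}$).

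The key steps, in order: (1) Build the family $W(n_1,n_2)$ by starting from $\gamma_1$, self-clutching $n_1-1$ times (using the two-extra-marked-points trick so the relevant orbits have $\gcd = m$, giving defect $m-1$ each time) to obtain a family with $\mu$-ordinary polygon $u_1^{n_1}\oplus \ord^{(n_1-1)(m-1)}$; similarly obtain $u_2^{n_2}\oplus \ord^{(n_2-1)(m-1)}$ from $\gamma_2$; then clutch these two together once along the original branch orbits $a_1(N_1),a_2(1)$, which has defect $r-1$. (2) Check at each stage that hypotheses (A) and (B) hold: (A) is arranged by the reordering and by \Cref{rmk_hp2}(4)-type bookkeeping for the augmented data; (B) for $\gamma_i$ self-paired is automatic by \Cref{rmk_hp2}(2), and for the final cross-clutch one must verify (B) for the pair of iterated families — here I would invoke that the induced signatures of $u_i^{n_i}\oplus\ord^{\ast}$ against each other still satisfy the balanced condition, which follows because the extra $\tau$'s contributing $\ord$ have signature values at the extremes $0$ or $g(\tau)$ and so never violate the implication in \Cref{def_hp2}, combined with the assumption that $\gamma_1,\gamma_2$ is balanced. (3) Apply \Cref{thm_muord} repeatedly: each application outputs non-emptiness of $Z^\circ[u]$ for the new family with the predicted $\mu$-ordinary polygon, which by \Cref{balanced} is exactly the amalgamate sum computed from \eqref{EnuC3}. (4) Total up the defect: $(n_1-1)(m-1) + (n_2-1)(m-1) + (r-1) = (n_1+n_2-2)(m-1)+(r-1)$, and the abelian part is $u_1^{n_1}\oplus u_2^{n_2}$, giving the stated Newton polygon; then \Cref{muordsmooth} upgrades from stable to smooth.

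The main obstacle I expect is verifying hypothesis (B) persists through the iteration — specifically, that after augmenting each $\gamma_i$ with trivial branch points and self-clutching, and then cross-clutching the two towers, the induced signature functions $\cf^\dagger$ on each $\sigma$-orbit of $\CT = \hom_\QQ(\QQ[\mu_m],\CC)$ still obey the "no strict-inequality-in-opposite-directions" condition of \Cref{def_hp2}. The clean way to handle this is to prove a small lemma: if $\gamma,\gamma'$ is balanced and $\gamma'' $ is obtained from $\gamma$ by the \Cref{Dmd3} clutch with itself (after the marked-point augmentation), then $\gamma'',\gamma'$ is still balanced; this reduces to \Cref{rmk_hp2}(4) together with the observation that $\cf_3 = \cf_1^\dagger + \cf_2^\dagger + \delta$ with $\delta$ supported on orbits where both $\cf_i^\dagger$ vanish, so monotonicity comparisons on an orbit are inherited additively. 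Granting this lemma, the recursion goes through mechanically and the genus bookkeeping via \eqref{Dep} and \eqref{Egenus} is routine. I would also note explicitly that one need not track $N_3$ or the genus beyond confirming it matches $\dim$ expectations, since \Cref{muordsmooth} and \Cref{thm_muord} only require non-emptiness statements.
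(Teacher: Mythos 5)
Your proposal follows essentially the same route as the paper's proof: build the two towers $Z_i^{\times n_i}$ via \Cref{inf-muord} with $t=m$ (contributing defect $(n_i-1)(m-1)$ each), check that the resulting pair still satisfies hypotheses (A) and (B), and finish with one application of \Cref{thm_muord} for the cross-clutch, which has defect $r-1$ since $d_1=d_2=1$ and hypothesis (A) forces $\gcd(m,a_1(N_1))=\gcd(m,a_2(1))=r$. The only slip is your claim that $\delta$ is supported on orbits where both $\cf_i^\dagger$ vanish --- for the $t=m$ self-clutch one has $\delta\equiv 1$ on all of $\CT\setminus\{\tau_0\}$ --- but since $\delta$ is constant on each $\sigma$-orbit it cannot affect the comparisons in \Cref{def_hp2}, so your conclusion that hypothesis (B) persists is still correct.
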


\begin{proof}
	We apply \Cref{inf-muord} to the family $Z_1$ (resp.\ $Z_2$) with $t=m$.
	The result is a family $Z_1^{\times n_1}$ (resp.\ $Z_2^{\times n_2}$) of smooth curves with Newton polygon 
	$u_1^{n_1}\oplus \ord^{(n_1-1)(m-1)}$ (resp.\ $u_2^{n_2}\oplus \ord^{(n_2-1)(m-1)}$). 
	Since $Z_1$ and $Z_2$ satisfy hypotheses (A) and (B), due to the construction in the proof of \Cref{inf-muord}, $Z_1^{\times n_1}$ and $Z_2^{\times n_2}$ also satisfy hypothesizes (A) and (B).
	The result then follows from \Cref{thm_muord}.
\end{proof}


\section{Hypothesis (B) and the $\mu$-ordinary Newton polygon}\label{sec_muord}

In this section, we prove \Cref{balanced}, 
namely that hypothesis (B) for a pair of signatures $(\cf_1,\cf_2)$ is  a necessary and sufficient condition for the associated Shimura variety $\Sh_1\times\Sh_2$ to intersect the $\mu$-ordinary Newton polygon stratum of $\Sh^*_3$.
Recall that $\sigma$ is Frobenius and $\CO$ is the set of orbits of $\sigma$ in $\CT$.	

\begin{notation}	
For a $\sigma$-orbit $\co \in \CO$,
let $\p_\co$ be the prime above $p$ associated with $\co$ and let $|\co|$ be the size of the orbit.
For each $\tau \in \co$, the order of $\tau$ in $\ZZ/m\ZZ$ is constant and denoted $e_\co$; by definition, $e_\co \mid m$.
Let $\QM_{\p_\co}$ denote the local field which is the completion of $K_{e_\co}$ along the prime $\p_\co$.
\end{notation}
	
	Let  $\CX$ denote the universal abelian scheme over $\Sh=\Sh(H,\hh)$\footnote{
		or more generally the universal semi-abelian variety over its toroidal compactification,}, 
	and $\CX[p^\infty]$ the associated $p$-divisible group scheme.
	Let $x\in\Sh(\overline{\FF}_p)$ and consider the abelian variety $X:=\CX_x$. 
	Let $\nu=\nu(X)$ be the Newton polygon of $X$. 
	We omit the proof of the following. 
	
	\begin{lemma}\label{NPco}
	The $\QM$-action of $\CX$ induces a $\QM\otimes_\QQ\QQ_p$-action on $\CX[p^\infty]$. 
	Thus it induces canonical decompositions
	\[\CX[p^\infty]=\bigoplus_{\co\in\CO}\CX[\p^\infty_\co] \text{ and } \nu=\bigoplus_{\co\in\CO} \nu(\co),\]
	where, for each $\co\in\CO$, the group scheme $\CX[\p^\infty_\co]$ is a $p$-divisible $\QM_{\p_\co}$-module and
	$\nu(\co)=\nu(X[\p^\infty_\co])$ is its Newton polygon. 
	\end{lemma}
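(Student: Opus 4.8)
The plan is to unwind the definitions and reduce everything to the standard theory of $p$-divisible groups with action by a product of local fields. Since $p \nmid m$, the ring $\ZZ[1/m]$ is in the picture and the group algebra decomposes as $\QM \otimes_\QQ \QQ_p = \prod_{\co \in \CO} \QM_{\p_\co}$, a product of unramified local fields indexed by the $\sigma$-orbits $\co$ in $\CT$; this is exactly the decomposition of $\prod_{d \mid m} K_d$ into its $p$-adic completions, regrouped by the residue-degree data that the orbits $\co$ encode. First I would record that this ring decomposition is induced by a complete set of orthogonal idempotents $e_\co \in \QM \otimes_\QQ \QQ_p$, and that these idempotents act on the $p$-divisible group $\CX[p^\infty]$ because the $\QM$-action on $\CX$ (coming from the $\mu_m$-action on the Jacobian, or on the universal abelian/semi-abelian scheme) induces a $\QM \otimes_\QQ \QQ_p$-action on $\CX[p^\infty]$ by functoriality of the $p$-divisible group.

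Next I would set $\CX[\p_\co^\infty] := e_\co \cdot \CX[p^\infty]$, which is a $p$-divisible subgroup scheme since $e_\co$ is idempotent, and observe that $\CX[p^\infty] = \bigoplus_{\co} \CX[\p_\co^\infty]$ because the $e_\co$ sum to $1$ and are orthogonal; each summand carries a $\QM_{\p_\co} = e_\co \cdot (\QM \otimes \QQ_p)$-module structure, so it is a $p$-divisible $\QM_{\p_\co}$-module. Then I would pass to the associated Dieudonn\'e modules (or directly to crystalline cohomology, as in the definition of Newton polygon given in the excerpt): the decomposition of $\CX[p^\infty]$ yields a compatible $\sigma$-equivariant decomposition $H^1_{\rm cris}(X_0/W(\FF_0)) \otimes \QQ_p = \bigoplus_\co M_\co$ after passing to a finite field $\FF_0$ over which everything is defined and the action is linear. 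Since the Newton polygon is computed from the valuations of eigenvalues of a power of Frobenius, and these eigenvalues can be collected according to the block decomposition, the Newton polygon is the amalgamate sum (disjoint union of multi-sets, per \Cref{NPsum}) $\nu = \bigoplus_\co \nu(\co)$ where $\nu(\co)$ is the Newton polygon of the isocrystal $M_\co$, equivalently of the $p$-divisible group $\CX[\p_\co^\infty]_x$.

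The only genuinely delicate point — and the reason the authors "omit the proof" — is matching up two parallel decompositions cleanly: the algebraic one of $\QM \otimes \QQ_p$ into local fields, and the combinatorial one of $\CT$ into $\sigma$-orbits. Concretely, one must check that the primes $\p_\co$ of $\QM$ above $p$ are exactly indexed by the orbits $\co$, with $e_\co$ the order of any $\tau \in \co$ governing which cyclotomic factor $K_{e_\co}$ contributes and $|\co|$ the residue degree; this is the standard fact that Frobenius acts on $\hom_\QQ(\QM, \overline{\QQ}_p^{\rm un})$ with orbits corresponding to places above $p$. After that, the $p$-divisibility of each summand and the additivity of Newton polygons under direct sums of $p$-divisible groups are routine, so the main obstacle is really just this bookkeeping, which is why it is safe to leave to the reader. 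I would present the argument in the order: (1) decompose the coefficient ring and transport idempotents to $\CX[p^\infty]$; (2) split off the $\p_\co$-parts and identify their module structure; (3) descend to a finite field and split the isocrystal; (4) conclude additivity of Newton polygons.
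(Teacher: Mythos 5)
The paper gives no proof of this lemma --- the authors explicitly write ``We omit the proof of the following'' --- and your argument is precisely the standard one they are omitting: decompose $\QM\otimes_\QQ\QQ_p$ into local fields indexed by the $\sigma$-orbits, transport the resulting idempotents to $\CX[p^\infty]$, and use additivity of Newton polygons under direct sums of isocrystals. The one point worth making explicit in step (1) is why a $\QQ_p$-algebra's idempotents act on a $p$-divisible group at all: the action on $\CX$ is really by the order $\ZZ[\mu_m]$, and since $p\nmid m$ the ring $\ZZ[\mu_m]\otimes\ZZ_p$ is \'etale over $\ZZ_p$, so the idempotents $e_\co$ already lie in this integral completion and hence act on each $\CX[p^n]$; with that remark included, your proof is complete and correct.
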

		
	For each $\nu \in B=B(H_{\QQ_p},\mu_\hh)$, we write $\nu(\co)$ for its $\co$-component, hence $\nu=\bigoplus_{\co\in\CO} \nu(\co)$. 
	For all $\nu,\nu'\in B$, note that $\nu\leq \nu'$ if and only if $\nu(\co)\leq \nu'(\co)$, for all $\co\in\CO$.

The following lemma says that to prove \Cref{balanced}, it suffices to consider each $\sigma$-orbit $\co$ in $\CT=\ZZ/m_3\ZZ$ separately. 

\begin{lemma}\label{r-1}
	With the same notation and assumption as in \Cref{balanced}: 
		the equality $u_3=u_1^{d_1}\oplus  u_2^{d_2}\oplus \ord^\epsilon$ is equivalent to the system of equalities, 
		for every orbit $\co$ in $\CT$,
		\begin{equation} \label{ENPunramorbit}
		u_3(\co)=u_1^{d_1}(\co)\oplus  u_2^{d_2}(\co)\oplus  \ord^{\epsilon_\co}, 
		\end{equation}
		where $\epsilon_\co:=|\co|$ if
		$e_\co$ is a divisor of $d_1d_2r_0$ but not a divisor of $d_1$ or $d_2$,
		and $\epsilon_\co:=0$ otherwise.
\end{lemma}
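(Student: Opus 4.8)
\textbf{Proof plan for \Cref{r-1}.}
The plan is to decompose the statement $u_3=u_1^{d_1}\oplus u_2^{d_2}\oplus\ord^\epsilon$ orbit-by-orbit using the canonical $\sigma$-orbit decomposition of the $\mu$-ordinary Newton polygon recalled in \Cref{NPco} and in \Cref{Skottwitz}. First I would recall that for $\nu,\nu'\in B$ one has $\nu\le\nu'$ if and only if $\nu(\co)\le\nu'(\co)$ for all $\co$, so $u_3=u_1^{d_1}\oplus u_2^{d_2}\oplus\ord^\epsilon$ (which by \Cref{clutchNP} is a priori an inequality $u_3\ge u_1^{d_1}\oplus u_2^{d_2}\oplus\ord^\epsilon$) is equivalent to the conjunction of the orbit-wise equalities $u_3(\co)=(u_1^{d_1}\oplus u_2^{d_2}\oplus\ord^\epsilon)(\co)$ over all $\co\in\CO$ for $\CT=\ZZ/m_3\ZZ$. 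So the whole content is to identify the $\co$-component of each of the four pieces.

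The key computation is to track how induction and amalgamate sums interact with the $\sigma$-orbit decomposition. For $u_i^{d_i}$, recall from \Cref{Nind} that $u^{\dagger_{d_i}}=u_i^{d_i}$ arises from the signature $\cf_i^{\dagger_i}=\cf_i\circ\pi_{d_i}$; I would check that the $\co$-component of $u_i^{d_i}$ (as an object over $\ZZ/m_3\ZZ$) is $u_i(\pi_{d_i}(\co))$ if the order $e_\co$ of $\co$ in $\ZZ/m_3\ZZ$ divides $m_i$ (equivalently, if $d_i\mid$ something — precisely, if $e_{\co}\mid m_i$, i.e. the orbit survives projection to $\ZZ/m_i\ZZ$), and is the ``zero'' polygon (no slopes) otherwise; one also has to account for the multiplicity change, since a single orbit in $\ZZ/m_i\ZZ$ can split into several orbits in $\ZZ/m_3\ZZ$, but their union has the right total multiplicity. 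For the $\ord^\epsilon$ term I would use the formula $\epsilon=d_1d_2r_0-d_1-d_2+1$ from \Cref{Dtype3} together with the fact (to be verified by a short counting argument on divisors of $m_3$) that the contribution $\ord^\epsilon$ distributes over orbits exactly as $\ord^{\epsilon_\co}$, where $\epsilon_\co=|\co|$ precisely when $e_\co\mid d_1d_2r_0$ but $e_\co\nmid d_1$ and $e_\co\nmid d_2$, and $\epsilon_\co=0$ otherwise; summing $\epsilon_\co$ over orbits must recover $\epsilon$, and this sum identity is essentially the statement that the number of $n\in\ZZ/m_3\ZZ$ with $d_1d_2r_0 n\equiv 0$ but $d_1n\not\equiv 0$ and $d_2 n\not\equiv 0$ equals $\epsilon$ — which matches the description of $\delta$ right after \Cref{Dsig3} (the term $\delta$ records exactly this set). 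Finally, on the left side, $u_3(\co)$ is governed by $\cf_3$ via the formulas \eqref{eq_slopes}, \eqref{eq_multi}, and by \Cref{Lsig3} the signature $\cf_3=\cf_1^\dagger+\cf_2^\dagger+\delta$ splits cleanly by orbit.

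Putting these together, the orbit-wise equality \eqref{ENPunramorbit} is exactly the orbit-$\co$ restriction of the global equality, and since both hold for all $\co$ simultaneously the two formulations are equivalent. The main obstacle I anticipate is bookkeeping rather than conceptual: being careful about the distinction between orbits of $\ZZ/m_i\ZZ$ and orbits of $\ZZ/m_3\ZZ$ under $\sigma$ (an orbit downstairs may split upstairs, and orbit sizes change by divisibility conditions depending on $p\bmod m_i$ versus $p\bmod m_3$), and verifying that the apparently ad hoc definition of $\epsilon_\co$ is forced by the requirement $\sum_\co\epsilon_\co=\epsilon$ together with the $\ord$-multiplicity of $u_3(\co)$ relative to $u_i^{d_i}(\co)$. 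Once the orbit decomposition of each of the four terms is pinned down, the equivalence is formal; I would present the orbit decomposition of $u_i^{d_i}$ and of $\ord^\epsilon$ as the two substantive lemmas inside the proof, then conclude.
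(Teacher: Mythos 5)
Your proposal is correct and follows essentially the same route as the paper: decompose both sides orbit-by-orbit via \Cref{NPco} and the fact that equality/comparison of polygons in $B_3$ is checked componentwise, and then verify the counting identity $\epsilon=\sum_{\co}\epsilon_\co$ by observing that $\epsilon=d_1d_2r_0-d_1-d_2+1$ counts exactly the $\tau\in\CT$ whose order divides $d_1d_2r_0$ but neither $d_1$ nor $d_2$ (using $\gcd(d_1,d_2)=1$), which matches your description via the set defining $\delta$. The extra bookkeeping you anticipate about orbits of $\ZZ/m_i\ZZ$ versus $\ZZ/m_3\ZZ$ is avoided in the paper by reading $u_i^{d_i}(\co)$ directly as the $\co$-component of the $\mu$-ordinary polygon of $\Sh(\mu_{m_3},\cf_i^\dagger)$, but this does not affect the validity of your argument.
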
 

\begin{proof}
	Note that $\epsilon=d_1d_2r_0-d_1-d_2+1$ is equal to the number of $\tau\in\CT$ whose order is a divisor of $d_1d_2r_0$ but not a divisor of $d_1$ or $d_2$; recall $\gcd(d_1,d_2)=1$. 
	Thus $\epsilon=\sum_{\co\in\CO} \epsilon_\co$, and the statement follows from \Cref{NPco} and the discussion below the lemma. 
\end{proof}

\begin{proof}[Proof of \Cref{chain2}]
Fix an orbit $\co$ in $\CT$.
	For $i=1,2$, let $u_i^\dagger$ denote the $\mu$-ordinary Newton polygon of $\Sh(\mu_{m_3},\cf_i^\dagger)$.
By definition,  $u_i^\dagger=u_i^{d_i}$, and  $u_i^\dagger(\co)=u_i^{d_i}(\co)$. 	
	That is, the Newton polygons $u_i^\dagger(\co)$ and $u_i(\co)$ have the same slopes, with 
	the multiplicity of each slope in $u_i^\dagger(\co)$ being $d_i$ times its multiplicity in $u_i(\co)$. 
	Recall the formulas for the slopes and multiplicities of $u(\co)$ from \Cref{Sformulau}.
	
	\smallskip

	\paragraph{{\bf Reduction to a combinatorial problem.}}
	
	The formulas for the slopes and multiplicities
	rely only on the signature type $\cf$ viewed as a $\NN$-valued function on $\CT$, and do not require
	$\cf$ to be a signature associated with a Shimura variety. 
	We regard each signature type $\cf$ as an $\NN$-valued function on $\co$, 
	and denote by $u(\co)$ the Newton polygon defined by the data of slopes in \eqref{eq_slopes}
	and multiplicities in \eqref{eq_multi}.\footnote{the integer $g(\co)$ will be specified in each part of the proof.}  We use subscripts and superscripts to identify 
	various $\NN$-valued functions and their Newton polygons, for example, 
	$\cf_1^\dagger$ and $u_1^\dagger(\co)$.
	
	\Cref{chain2} follows from the claim below taking $R=r_0$, using \Cref{r-1}. 
	
	\smallskip
	
	\paragraph{{\bf Claim}} {\em Let $R$ be a positive integer which divides $\gcd(m_1,m_2)$. 
	Set
		$\delta:=\delta_{d,dR}+\delta_{d_1,d_1d_2}-\delta_{1,d_2}$, with 
		$d:=m_3/\gcd(m_1,m_2)=d_1d_2$ and notations as in \Cref{delta}; set
		$\epsilon_\co:=|\co|$ if $e_\co$ divides  $dR$  but not $d_1$ or $d_2$, and $\epsilon_\co:=0$ otherwise.
				
		Define $\cf_3:=\cf_1^\dagger+\cf^\dagger_2+\delta$.
		Then, the equality \begin{equation}\label{eq_uco}
		u_3(\co)=u^\dagger_1(\co) \oplus u^\dagger_2(\co)\oplus\ord^{\epsilon_\co}
		\end{equation}
		holds if and only if the pair $\cf_1,\cf_2$ satisfies hypothesis (B). }
	
	\smallskip

	\paragraph{{\bf Reduction of claim to the case $d=R=1$}}
	
	We first prove that if $\cf_3=\cf_1^\dagger+\delta$ then
	\begin{equation}\label{eq_uco2}
	u_3(\co)=u^\dagger_1(\co) \oplus \ord^{\epsilon_\co}.
	\end{equation}

Note that the function $\delta(\tau)$ is constant on $\co$, with value $\delta(\co)$ equal to $1$ if $e_\co$ divides  $dR$  but not $d_1$ or $d_2$, and to $0$ otherwise. In particular, $\epsilon_\co=\sum_{\tau\in\co}\delta(\tau)$, $\delta(\co)=\delta(\co^*)$, and by \Cref{Rdual}, $g_3(\co)=g^\dagger_1(\co)+ 2\delta(\co)$.
	
	If $\delta(\co)=0$, then $g_3(\co)=g^\dagger_1(\co)$, and $\cf_3(\tau)=\cf_1^\dagger(\tau)$ for all $\tau\in\co$. Hence, $u_3(\co)=u_1^\dagger(\co)$ which agrees with equality \eqref{eq_uco2} for $\epsilon_\co=0$.
	
	If $\delta(\co)=1$, then $g_3(\co)=g_1^\dagger(\co)+2$, and  $\cf_3(\tau)=\cf_1^\dagger(\tau)+1$ for all $\tau\in\co$.  In particular, $g_3(\co)>\cf_3(\tau) \geq 1$, for all $\tau\in\co$.
	By \eqref{eq_slopes} and \eqref{eq_multi}, both $0$ and $1$ occur as slopes of $u_3(\co)$, 
	with multiplicity respectively $\rho_3(0)=\rho_1(0)+|\co|$ and $\rho_3(1)=\rho_1(1)+|\co|$. 	Each of the slopes $\lambda$ of $u_1^\dagger(\co)$, with $\lambda\not= 0,1$, also occurs for $u_3(\co)$, with  multiplicity $\rho_3(\lambda)=\rho_1(\lambda)$. We deduce that
	$u_3(\co)=u^\dagger_1(\co) \oplus \ord^{|\co|}$, which agrees with equality \eqref{eq_uco2} for $\epsilon_\co=|\co|$.
	
	\smallskip
	
	Equality \eqref{eq_uco2}  is equivalent to the claim for $\cf^\dagger_2(\tau)=0$, for all $\tau\in\co$. 
	Indeed,  hypothesis (B) for the pair $(\cf^\dagger_1,0)$ holds trivially, and equality \eqref{eq_uco} for  $\cf_3=\cf^\dagger_1+\delta$ 	specializes to 
	\eqref{eq_uco2}.

By \eqref{eq_uco2}, we deduce that equality \eqref{eq_uco} holds if and only if it holds in the special case of $d=R=1$, that is if
 $u_3(\co)=u^\dagger_1(\co) + u^\dagger_2(\co)$ when $\cf_3=\cf^\dagger_1+\cf_2^\dagger$.
	By definition, hypothesis (B) holds for the pair $(\cf_1^\dagger, \cf^\dagger_2)$ if and only if it holds for $(\cf^\dagger_1+\delta, \cf^\dagger_2)$.
	Hence, without loss of generality, we may assume $d=R=1$ and $\cf_3=\cf_1+\cf_2$.
	In this case, $g_3(\co)=g_1(\co)+g_2(\co)$ by \Cref{Rdual} and the claim reduces to the following:

	\smallskip
	\paragraph{{\bf Specialized claim:}} {\em Assume $\cf_3=\cf_1 + \cf_2$.
		Then, the equality 
		\begin{equation}\label{eq_ucoS}
	u_3(\co)=u_1(\co)\oplus u_2(\co).
	\end{equation}
		holds if and only if the pair $\cf_1,\cf_2$ satisfies hypothesis (B). }

	\smallskip
	
	\paragraph{{\bf Converse direction: assume hypothesis (B)}}
	We shall prove that the equality \eqref{eq_ucoS} holds, by induction on the integer $s_3+1$, the number of distinct slopes of $u_3(\co)$. 
	More precisely, we shall proceed as follows.  First, we shall establish the base case of induction, for $s_3=0$. Next, we shall prove the equality of multiplicities
	\begin{equation}\label{eq_multiEQ}
	\rho_3(\lambda)=\rho_1(\lambda)+ \rho_2(\lambda)
	\end{equation} 
	for $\lambda=\lambda_3(0)$ the first (smallest) slope of $u_3(\co)$.
	Then, we shall assume $s_3\geq 1$ and show that the inductive hypothesis and equality \eqref{eq_multiEQ} imply equality \eqref{eq_ucoS}, 
	which will complete the argument. In the induction process, $g_i(\co)$ remains unchanged.

	\smallskip
	
	\paragraph{{\bf Base case:}}
	Assume $s_3=0$.
	Then, for all $\tau\in\co$, either $\cf_3(\tau) = g_3(\co)$ or $\cf_3(\tau)=0$.
	The equalities $\cf_3(\tau)=\cf_1(\tau)+\cf_2(\tau)$ and $g_3(\co)=g_1(\co)+g_2(\co)$ imply that   $\cf_3(\tau)=g_3(\co)$ (resp.\ $\cf_3(\tau)=0$) if and only if $\cf_i(\tau)=g_i(\co)$ (resp.\  $\cf_i(\tau)=0$)  for both $i=1,2$.
	We deduce that both hypothesis (B) and \eqref{eq_ucoS} hold in this situation.

	\smallskip
	
	\paragraph{{\bf Equality \eqref{eq_multiEQ}}}
	For $i=1,2,3$, let  $E_i(\max)$ denote the maximal value of $\cf_i$ on $\co$. By definition, $E_i(\max)$ is equal to either $E_i(0)$ or $E_i(1)$. In the first case, $\lambda_i(0)>0$; in the second case, $\lambda_i(0)=0$.
	We claim that hypothesis (B) implies $E_3(\max)=E_1(\max)+E_2(\max)$.
	First, note that hypothesis (B) implies that, for $\omega,\tau\in\co$: 
	\begin{equation}\label{eq_BB}
	\cf_3(\omega)=\cf_3(\tau) \text{ if and only if } \cf_i(\omega)=\cf_i(\tau) \text{ for both } i=1,2.
	\end{equation}
	
	For $i=1,2,3$, set $S_i=\{\tau\in\co\mid \cf_i(\tau)=E_i(\max)\}$. Then, property \eqref{eq_BB} implies that $S_3=S_1\cap S_2$, which in turn implies 
	$E_3(\max)=E_1(\max)+E_2(\max)$.
	
	We claim that hypothesis (B) implies that $S_3=S_i$ for some $i\in\{1,2\}$.
	Without loss of generality, assume that $S_2$ properly contains $S_3$, and let $\tau_0\in S_2-S_3$. Then $\tau_0\not\in S_1$.
	For any $\omega\in S_1$: $\cf_1(\omega)=E_1(\max)>\cf_1(\tau_0)$. Thus, by hypothesis (B), $\cf_2(\omega)\geq \cf_2(\tau_0)=E_2(\max)$. We deduce that $\cf_2(\omega)=E_2(\max)$, hence $S_1\subseteq S_2$.
	
	By the formulas for slopes \eqref{eq_slopes}, the equality $S_1=S_3$ implies that $\lambda_1(0)=\lambda_3(0)$, and the inclusion $S_1\subseteq S_2$ implies $\lambda_1(0)\leq \lambda_2(0)$ (and the equality holds if and only if $S_2=S_3$).
	
	For $i=1,2,3$, let $E_i({\rm next})$ denote the maximal value of $\cf_i$ on 
	$\co-S_3$. 
	For $i=1,3$, $E_i({\rm next})<E_i(\max)$; for $i=2$, $E_2({\rm next})\leq E_2(\max)$ and the equality holds if and only if $S_2$ properly contains $S_3$.
	
	As before by property \eqref{eq_BB}, we deduce that hypothesis (B) implies $E_3({\rm next})=E_1({\rm next})+E_2({\rm next})$.
	By the formulas for multiplicities \eqref{eq_multi}, the two identities, $E_3(\max)=E_1(\max)+E_2(\max)$ and $E_3({\rm next})=E_1({\rm next})+E_2({\rm next})$, imply the desired equality \eqref{eq_multiEQ}.
	
	\smallskip
	
	\paragraph{{\bf Assume $s_3\geq 1$}}
	Then the polygon $u_3(\co)$ has at least two distinct slopes. Our plan is to introduce auxiliary functions $\tilde{\cf}_1(\tau), \tilde{\cf}_2(\tau)$ such that polygon $\tilde{u}_3(\co)$ associated with the function $\tilde{\cf}_3(\tau):=\tilde{\cf}_1(\tau)+\tilde{\cf}_2(\tau)$ has  $s_3$ distinct slopes. 
	
	For $i=1,2$, define $\tilde{\cf}_i(\tau):=\cf_i(\tau)$ for all $\tau\not\in S_3$ and $\tilde{\cf}_i(\tau):=E_i({\rm next})$ for $\tau\in S_3$.
	Note that $\tilde{\cf}_2=\cf_2$ unless $S_2=S_3$.
	By definition, for $i=1,2,3$, $\tilde{E}_i(\max)=E_i({\rm next})$.
	
	For $i=1,3$, and for $i=2$ if $S_2=S_3$, the polygon $\tilde{u}_i(\co)$ shares the same slopes as $u_i(\co)$ except $\lambda_i(0)$ which no longer occurs. For each $t=2, \dots, s_i$, the slope $\lambda_i(t)$ occurs in 
	$\tilde{u}_i(\co)$ with multiplicity equal to $\rho_i(\lambda_i(t))$; while the slope $\lambda_i(1)$ occurs in $\tilde{u}_i(\co)$ with multiplicity
	$\rho_i(\lambda_i(0))+\rho_i(\lambda_i(1))$.
	For $i=2$, if $S_2\not= S_3$, then $\tilde{u}_2(\co)=u_2(\co)$.
	
	Note that $\tilde{u}_3$ has exactly $s_3$ slopes. Hence, by the inductive hypothesis, we deduce that $\tilde{u}_3(\co)=\tilde{u}_1(\co)\oplus \tilde{u}_2(\co)$. This identity, together with \eqref{eq_multiEQ} and the above computation of $\tilde{\rho}_i(\lambda_i(1))$, implies that $u_3(\co)=u_1(\co) \oplus u_2(\co)$.
	
	\smallskip
	
	\paragraph{{\bf Forward direction: assume \eqref{eq_ucoS}}}
	We shall prove that the pair $(\cf_1,\cf_2)$ satisfies hypothesis~(B), arguing by contradiction. Supposing hypothesis (B) does not hold, we shall define auxiliary functions $\bar{\cf}_1,\bar{\cf}_2$ obtained by precomposing $\cf_1,\cf_2$  with a permutation of $\co$, such that the Newton polygon $\bar{u}_3(\co)$ associated with the function $\bar{\cf}_3(\tau):=\bar{\cf}_1(\tau)+\bar{\cf}_2(\tau)$ is strictly above  $u_3(\co)$, i.e., 
	$\bar{u}_3(\co)<u_3(\co)$.   
	By the formulas for slopes and multiplicities \eqref{eq_slopes} and \eqref{eq_multi}, we see that precomposing a function $\cf$ with a permutation of $\co$ does not change the associated polygon $u(\co)$. Hence, for $i=1,2$ we deduce $\bar{u}_i(\co)=u_i(\co)$.
	On the other hand, by repeating the permutation process, we eventually end up with a pair $(\bar{\cf}_1,\bar{\cf}_2)$ which satisfies hypothesis (B) and by the above argument, hypothesis (B) implies that $\bar{u}_1(\co)\oplus \bar{u}_2(\co)=\bar{u}_3(\co)$. Hence $\bar{u}_3(\co)=u_1(\co)\oplus u_2(\co)$, which equals $u_3(\co)$ by \eqref{eq_uco} and contradicts the conclusion that $\bar{u}_3(\co)<u_3(\co)$.   
	
	\smallskip
	
	\paragraph{{\bf Contradict hypothesis (B)}}
	Thus, there exist $\omega_0,\eta_0\in \co$ such that
	\begin{equation*}
	\cf_2(\omega_0)>\cf_2(\eta_0) \text{ and }\cf_1(\omega_0)<\cf_1(\eta_0). 
	\end{equation*}
	Let $\gamma$ denote the permutation of $\co$ which switches $\omega_0$ and $\eta_0$, and define $\bar{\cf}_1(\tau):=\cf_1(\gamma(\tau))$ and $\bar{\cf}_2(\tau):=\cf_2(\tau)$. Set $\bar{\cf}_3(\tau):=\bar{\cf}_1(\tau)+\bar{\cf}_2(\tau)$.
	Then, $\bar{\cf}_3(\tau)=\cf_3(\tau)$ except for $\tau=\omega_0,\eta_0$. Note that $\bar{\cf}_3(\omega_0)>\cf_3(\omega_0)$,  $\bar{\cf}_3(\eta_0)<\cf_3(\eta_0)$,
	and $\bar{\cf}_3 (\omega_0)>\cf_3(\eta_0)$ (also, $\bar{\cf}_3 (\eta_0)<\cf_3(\omega_0)$).
	
	We claim that $\bar{u}_3(\co)<u_3(\co)$, meaning that  $\bar{u}_3(\co)$ and $u_3(\co)$ share the same endpoints (this follows from the equality $\bar{g}_3(\co)=g_3(\co)$), and that $\bar{u}_3(\co)$ lies strictly above $u_3(\co)$.
	
	We first show that possibly after sharing the first several slopes, $\bar{u}_3(\co)$ admits a slope which is strictly larger than the corresponding one in $u_3(\co)$.
	Let us consider the value $A=\bar{\cf}_3(\omega_0)$. Note that $A>\bar{\cf}_3(\omega_0)\geq 0$.
	If  $A=\cf_3(\tau)$ for some $\tau\in\co$, 
	say $A=E_3(t)$ for some $t\in\{0,\dots, s_3\}$. Then
	by the formulas for slopes and multiplicities \eqref{eq_slopes} and \eqref{eq_multi}, we deduce that  the first $t$ slopes, and their multiplicities, of $\bar{u}_3(\co)$ and $u_3(\co)$ agree, but the $(t+1)$-st slope of $\bar{u}_3(\co)$ is strictly larger that the $(t+1)$-st slope of $u_3(\co)$. 
	If $\cf_3(\tau)\not= A$ for all $\tau\in\co$, let $t \in\{0,\dots, s_3\}$ be such that $E_3(t)> A> E_3(t+1)$. 
	We deduce that  the first $t$ slopes of $\bar{u}_3(\co)$ and $u_3(\co)$ agree, and so do the multiplicities of the first $t-1$ slopes, but 
	the multiplicity of the $t$-th slope of $\bar{u}_3$ is strictly smaller than that of $u_3(\co)$.  
	
	By similar arguments for the subsequent slopes and multiplicities, $\bar{u}_3(\co)$ never drops strictly below $u_3(\co)$, but it might (and often does) agree with $u_3(\co)$ for large slopes.
\end{proof}


\section{The Torelli locus and the non $\mu$-ordinary locus of Shimura varieties}\label{sec_nonord}

In this section, we study the intersection of the open Torelli locus with Newton polygon strata
which are not $\mu$-ordinary in PEL-type Shimura varieties.
The main result, Theorem~\ref{differentm}, provides a method 
to leverage information from smaller dimension to larger dimension.
This theorem is significantly more difficult than \Cref{thm_muord}; we add an extra compatibility condition
to maintain control over the codimensions of the Newton polygon strata.
This is the first systematic result on this topic that we are aware of.

For applications, we find situations in which Theorem \ref{differentm}
can be applied infinitely many times; from this, we produce systems of 
infinitely many PEL-type Shimura varieties for which we can verify that the 
open Torelli locus intersects certain non $\mu$-ordinary Newton polygon strata.
See \Cref{Cadd01non,inf-nu,twofamilynext} and \Cref{Applications1} for details.

\begin{notation}\label{Dep1}
Let $\gamma_1=(m_1,N_1,a_1)$,  $\gamma_2=(m_2,N_2,a_2)$ be an ordered pair of 
(generalized) monodromy data which satisfies hypothesis (A). Assume that $m_1|m_2$.
Set $d:=m_2/m_1$ and $r:=\gcd(m_1, a_1(N_1))$.  
Then, \eqref{Dep} specializes to 
$\epsilon=d(r-1)$ and $g_3=dg_1+g_2+\epsilon$.
In particular, $\epsilon=0$ if and only if $r=1$. 
\end{notation}

\Cref{why} explains why we restrict to the case $m_1|m_2$.

\subsection{Hypothesis (C)}

To study the Newton polygons beyond the $\mu$-ordinary case, we introduce an extra hypothesis. 

\begin{definition}
	Given a Newton polygon $\nu$, the \emph{first slope} $\lambda_{1st}(\nu)$ is the smallest slope of $\nu$ and the \emph{last slope} $\lambda_{last}(\nu)$ is the largest slope of $\nu$.
	If $\nu$ is symmetric with $q$ distinct slopes, the \emph{middle slope} 
	$\lambda_{mid}(\nu)$ is the $\lfloor\frac{q+1}{2}\rfloor$-st slope of $\nu$.
\end{definition}

\begin{definition}\label{def_hp3}
	An ordered pair of monodromy data $\gamma_1, \gamma_2$ is \emph{compatible} if
	the slopes of the $\mu$-ordinary Newton polygons $u_1$ and $u_2$ satisfy:
	
hypothesis (C): for each orbit
	 $\co\in\CO$, every slope of $u_1(\co)$ is in the range 
	 \[[0,1]\setminus(\lambda_{1st}(u_2(\co)), \lambda_{last}(u_2(\co))).\]
	By convention, the condition in the previous line holds for $\co$ 
	if $u_i(\co)$ is empty for either $i=1,2$.
	If a pair of monodromy data is compatible, then we
	write $u_1\ll_{\text{(C)}} u_2$. 
\end{definition}

\begin{remark}\label{trivialC}
	\begin{enumerate}
		\item
		Hypothesis (C) holds for $\co$ if either 
		$u_1(\co)$ has slopes in $\{0,1\}$ or
		$u_2(\co)$ is supersingular.  In particular, if $u$ is ordinary or supersingular, then $u\ll_{\text{(C)}} u$.
		\item
		If $\co=\co^*$, then hypothesis (C) holds for $\co$ if and only if $\lambda_{mid}(u_1(\co))\leq \lambda_{1st}(u_2(\co))$. 
		\item If $u_1\ll_{\text{(C)}} u_2$ then $u_1^d \ll_{\text{(C)}} u_2$ for all $d \in \ZZ_{\geq 1}$. 
	\end{enumerate}
\end{remark}

\begin{remark}\label{rmk_hp3}
Let $\gamma_1,\gamma_2$ be a pair of monodromy data as in \Cref{Dep1}. By \Cref{Sformulau}, 
\[\lambda_{1st} (u_2(\co))=\frac{1}{|\co|}\#\{\tau\in\co \mid \cf_2(\tau)=g_2(\co)\} \text{ and } \lambda_{last} (u_2(\co))=\frac{1}{|\co|}\#\{\tau\in\co \mid \cf_2(\tau)> 0\}.\]
Hypothesis (C) holds for $\co$ if and only if there exists an integer $E(\co)\in[0,g_1(\co)]$ such that
\[\#\{\tau\in\co \mid \cf_1^\dagger(\tau)>E(\co)\} \leq \#\{\tau\in\co \mid \cf_2(\tau)=g_2(\co)\}, \text{ and}\] 
\[\#\{\tau\in\co \mid \cf_1^\dagger(\tau)\geq E(\co)\}\geq \#\{\tau\in\co \mid \cf_2(\tau)>0 \}.\]
\end{remark}
	
The next statement  follows  from \Cref{def_hp3} and \Cref{rmk_hp3}.

\begin{lemma}\label{ullu}
	The following are equivalent: 
				$u\ll_{\text{(C)}} u$;  for each $\co\in\CO$, the Newton polygon
	 $u(\co)$ has at most two distinct slopes; and, for each $\co\in\CO$,
	 there exists an integer $E(\co)\in[0,g(\co)]$ such that $\cf(\tau)\in \{0,E(\co),g(\co)\}$ for all $\tau\in\co$.
				\end{lemma}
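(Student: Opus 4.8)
The statement is a three-way equivalence, so the plan is to prove a cycle of implications, or more efficiently to reduce everything to the combinatorial content of \Cref{rmk_hp3} applied with $\gamma_1 = \gamma_2 = \gamma$ (so $d=1$, $\cf_1^\dagger = \cf_2 = \cf$, $g_1(\co) = g_2(\co) = g(\co)$). I would organize the argument orbit by orbit, since all three conditions are phrased pointwise over $\co \in \CO$ and the Newton polygon $u$ decomposes as $\bigoplus_\co u(\co)$ by \Cref{NPco}.

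First I would unwind the middle condition. Recall from \Cref{Sformulau} that the number of distinct slopes of $u(\co)$ is $s(\co)+1$, where $s(\co)$ is the number of distinct values of $\{\cf(\tau) \mid \tau \in \co\}$ lying in $[1, g(\co)-1]$. So $u(\co)$ has at most two distinct slopes if and only if $s(\co) \le 1$, i.e.\ the values $\{\cf(\tau) \mid \tau \in \co\}$ include at most one value strictly between $0$ and $g(\co)$. This is exactly the statement that there exists an integer $E(\co) \in [0, g(\co)]$ with $\cf(\tau) \in \{0, E(\co), g(\co)\}$ for all $\tau \in \co$ — taking $E(\co)$ to be the intermediate value if one exists, and $E(\co) = 0$ (or $g(\co)$) otherwise. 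This gives the equivalence of the second and third conditions directly from the definitions, with no real content beyond bookkeeping; I would present it as one short paragraph.

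Next I would connect these to $u \ll_{\text{(C)}} u$. By \Cref{trivialC}(2), since here $\co = \co^*$ is not automatic but hypothesis (C) for the pair $(\gamma,\gamma)$ is symmetric, the cleanest route is via \Cref{rmk_hp3}: hypothesis (C) holds for $\co$ iff there is an integer $E(\co) \in [0, g(\co)]$ with
\[
\#\{\tau \in \co \mid \cf(\tau) > E(\co)\} \le \#\{\tau \in \co \mid \cf(\tau) = g(\co)\}
\]
and
\[
\#\{\tau \in \co \mid \cf(\tau) \ge E(\co)\} \ge \#\{\tau \in \co \mid \cf(\tau) > 0\}.
\]
The first inequality forces every $\tau$ with $\cf(\tau) > E(\co)$ to in fact have $\cf(\tau) = g(\co)$ (the set on the left contains the set on the right, so equality of cardinalities forces set equality), i.e.\ no value of $\cf$ on $\co$ lies in $(E(\co), g(\co))$. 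The second inequality similarly forces every $\tau$ with $\cf(\tau) > 0$ to have $\cf(\tau) \ge E(\co)$, i.e.\ no value lies in $(0, E(\co))$. Together these say $\cf(\tau) \in \{0\} \cup [E(\co), g(\co)] \cap (\text{values})$ with no values in $(0,E(\co)) \cup (E(\co), g(\co))$, which is precisely $\cf(\tau) \in \{0, E(\co), g(\co)\}$ for all $\tau \in \co$. Conversely, if $\cf(\tau) \in \{0, E(\co), g(\co)\}$ for all $\tau$, both displayed inequalities hold (as equalities, in fact). Quantifying over all $\co \in \CO$ completes the chain of equivalences.

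The main obstacle is not deep — it is making sure the cardinality comparisons are handled correctly, in particular that $\#\{\cf > E\} \le \#\{\cf = g\}$ genuinely forces the two sets to coincide (which uses $\{\cf = g\} \subseteq \{\cf > E\}$ whenever $E < g$, and is vacuous when $E = g$), and keeping track of the degenerate cases where $u(\co)$ or the relevant value set is empty, where all three conditions hold trivially by the convention in \Cref{def_hp3}. I would also note that the case $g(\co) = 1$ (where every $\cf(\tau) \in \{0,1\}$ automatically) and the case where $\cf$ is constant on $\co$ are covered uniformly by the argument. No step requires anything beyond \Cref{Sformulau}, \Cref{def_hp3}, and \Cref{rmk_hp3}.
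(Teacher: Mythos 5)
Your proposal is correct and follows exactly the route the paper intends: the paper gives no written proof beyond asserting that the lemma "follows from \Cref{def_hp3} and \Cref{rmk_hp3}," and your argument is precisely a careful expansion of that, matching the second and third conditions via the slope count $s(\co)+1$ from \Cref{Sformulau} and matching the first via the counting reformulation of hypothesis (C) with $\gamma_1=\gamma_2=\gamma$. The containment-plus-cardinality argument and the handling of the degenerate cases $E(\co)\in\{0,g(\co)\}$ are all handled correctly.
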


Unlike hypothesis (B), hypothesis (C) 
	does not behave well under induction in general. 
	\Cref{HP3induce} identifies two instances when it does. We omit the proof.

\begin{lemma}\label{HP3induce}
		\begin{enumerate}
		\item If $u_1\ll_{\text{(C)}} u_2$ then $u^n_1\oplus \ord^l\ll_{\text{(C)}} u_2$,  for any $n,l\in\ZZ_{\geq 1}$.	
		
		\item 
		If $u_1\ll_{\text{(C)}} u_2$ and $u_2\ll_{\text{(C)}} u_2$, then $u^n_1\oplus u^m_2\oplus\ord^l\ll_{\text{(C)}} u_2$,  for any $n,m,l\in\ZZ_{\geq 1}$.	
	\end{enumerate}
\end{lemma}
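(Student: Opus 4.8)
The plan is to unwind \Cref{def_hp3} into a condition on the slope multisets of the orbit components and then to check directly that the two operations appearing in the statement --- amalgamating with $\ord^l$, and, for part (2), amalgamating with a power of $u_2$ --- leave every orbit component inside the prescribed range. Three elementary observations will be used throughout. First, by \Cref{NPco} the $\co$-component of an amalgamate sum is the amalgamate sum of the $\co$-components, and the $\co$-component of $\nu^k$ is $\nu(\co)^k$, which has the same set of slopes as $\nu(\co)$; hence whether $u\ll_{\text{(C)}} u'$ holds depends only on the sets of slopes of the various $u(\co)$ and $u'(\co)$. Second, for every orbit $\co$ with $u_2(\co)$ nonempty we have $0\le\lambda_{1st}(u_2(\co))$ and $\lambda_{last}(u_2(\co))\le 1$, so $0$ and $1$ always lie in $[0,1]\setminus(\lambda_{1st}(u_2(\co)),\lambda_{last}(u_2(\co)))$. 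Third, if $u_2(\co)$ has at most two distinct slopes, then its set of slopes is exactly $\{\lambda_{1st}(u_2(\co)),\lambda_{last}(u_2(\co))\}$ (or a single value), which is contained in the complement of the open interval $(\lambda_{1st}(u_2(\co)),\lambda_{last}(u_2(\co)))$.

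For part (1), \Cref{trivialC}(3) already gives $u_1^n\ll_{\text{(C)}} u_2$, so for each orbit $\co$ every slope of $u_1^n(\co)$ avoids $(\lambda_{1st}(u_2(\co)),\lambda_{last}(u_2(\co)))$. The $\co$-component of $u_1^n\oplus\ord^l$ is $u_1^n(\co)\oplus\ord^{l_\co}$ for some integer $l_\co\ge 0$, so the only new slopes it contributes are $0$ and $1$, which avoid the interval by the second observation. The only case not immediately covered is when $u_1(\co)$ is empty while $l_\co>0$: then either $u_2(\co)$ is empty and (C) holds for $\co$ by the convention in \Cref{def_hp3}, or $u_2(\co)$ is nonempty and the slopes $\{0,1\}$ of the $\co$-component again avoid the interval. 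Hence $u_1^n\oplus\ord^l\ll_{\text{(C)}} u_2$.

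For part (2), I would treat the summands $u_1^n$ and $\ord^l$ exactly as in part (1). It remains to control the slopes of $u_2^m(\co)$, which are the slopes of $u_2(\co)$; this is where the extra hypothesis $u_2\ll_{\text{(C)}} u_2$ is needed. By \Cref{ullu} it forces $u_2(\co)$ to have at most two distinct slopes for every $\co$, so by the third observation the set of slopes of $u_2(\co)$ lies in $[0,1]\setminus(\lambda_{1st}(u_2(\co)),\lambda_{last}(u_2(\co)))$. Collecting the contributions of $u_1^n$, $u_2^m$ and $\ord^l$, and disposing of orbits with empty $u_1$-component exactly as in part (1), we obtain $u_1^n\oplus u_2^m\oplus\ord^l\ll_{\text{(C)}} u_2$.

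I do not expect any genuine obstacle here: all the content sits in translating \Cref{def_hp3} into a statement about the slope sets of orbit components, together with the trivial facts that $0$, $1$, and the two extreme slopes of a polygon with at most two slopes all lie outside the forbidden open interval. The only point demanding a little care is the bookkeeping for orbits $\co$ where the $u_1$-component is empty but the $u_2$-component is not, which is handled by the convention in \Cref{def_hp3}; this is why the verification is routine enough to be omitted in the paper.
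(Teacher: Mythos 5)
The paper omits the proof of this lemma as routine, so there is nothing to compare against; your verification is correct and is exactly the argument the omission presupposes: hypothesis (C) depends only on the slope sets of the orbit components, the extra $\ord$ summands contribute only the slopes $0$ and $1$, which never lie in the open interval $(\lambda_{1st}(u_2(\co)),\lambda_{last}(u_2(\co)))$, and in part (2) the hypothesis $u_2\ll_{\text{(C)}}u_2$ (equivalently, via \Cref{ullu}, that each $u_2(\co)$ has at most two distinct slopes) is precisely what keeps the slopes of $u_2^m(\co)$ out of that interval. Your handling of the edge cases where an orbit component is empty, via the convention in \Cref{def_hp3}, is also correct.
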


\subsection{The significance of hypothesis (C)}

Hypothesis (C) is sufficient to prove the geometric condition on the Newton polygon stratification in \Cref{codimension1} below. We use hypothesis (C) to prove the surjectivity of the map in \eqref{EmaponKottwitz}.

\begin{proposition}\label{codim}\label{codimension1}
Let $\gamma_1=(m_1,N_1,a_1),\gamma_2=(m_2,N_2,a_2)$ be an ordered pair of monodromy data as in \Cref{Dep1}. 
Assume the pair satisfies hypotheses (A), (B), and (C). 
Consider the monodromy datum $\gamma_3 $ as in \Cref{Dmd3}.  
	Then, for any Newton polygon $\nu_2\in B_2$,
	\begin{equation} \label{Ecodimequal}
\codim (\Sh_2[\nu_2],\Sh_2)=\codim (\overline{\Sh}_3[u_1^d\oplus  \nu_2 \oplus \ord^{\epsilon}],\overline{\Sh}_3).
\end{equation}
\end{proposition}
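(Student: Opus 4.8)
The equality \eqref{Ecodimequal} concerns codimensions of Newton polygon strata, which by \eqref{eqn_codim} and \eqref{eqn_codim_cpt} are computed as lengths of maximal chains in the partially ordered sets $B_2$ and $B_3$ of Newton polygons. So the plan is to translate the claim into a statement about these posets. Write $\Phi\colon B_2\to B_3$ for the map $\nu_2\mapsto u_1^d\oplus\nu_2\oplus\ord^\epsilon$; by \Cref{clutchNP} this is well-defined, and by \Cref{balanced} (using hypotheses (A) and (B)) it sends the maximal element $u_2$ to the maximal element $u_3$. The length of $b\in B_2$ is the length of the longest chain from $b$ up to $u_2$, and similarly in $B_3$; thus \eqref{Ecodimequal} is equivalent to the assertion that $\Phi$ induces a bijection (compatible with the partial order in both directions) between the interval $[\nu_2,u_2]\subseteq B_2$ and the interval $[\Phi(\nu_2),u_3]\subseteq B_3$. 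The core of the argument is therefore to establish surjectivity of the induced map in \eqref{EmaponKottwitz} — that every Newton polygon in $B_3$ lying above $\Phi(\nu_2)$ and below $u_3$ is itself of the form $u_1^d\oplus\nu_2'\oplus\ord^\epsilon$ for some $\nu_2'\in B_2$ with $\nu_2\le\nu_2'\le u_2$ — and that this correspondence is order-preserving in both directions (injectivity being essentially automatic from the amalgamate-sum description).

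First I would set up the orbit decomposition: by \Cref{NPco} and the discussion following \Cref{r-1}, both $\nu=\bigoplus_\co\nu(\co)$ in $B_2$ and in $B_3$ decompose over the $\sigma$-orbits $\co\in\CO$, and $\nu\le\nu'$ iff $\nu(\co)\le\nu'(\co)$ for all $\co$. So it suffices to work orbit by orbit, matching $\Sh_2$'s orbit set (for $\QQ[\mu_{m_2}]$) with $\Sh_3$'s (for $\QQ[\mu_{m_3}]$) — here the hypothesis $m_1\mid m_2$, hence $m_3=m_2$, is what keeps the two orbit sets identical, which is why \Cref{why} restricts to this case. For each orbit $\co$, one has the combinatorial description of $B(\co)$: Newton polygons on a single orbit with $g(\co)$ and signature $\cf_3(\co)=\cf_1^\dagger(\co)+\cf_2(\co)+\delta(\co)$, partially ordered, and the relevant length is the distance to $u_3(\co)$. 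Hypothesis (C) enters precisely here: it guarantees (via \Cref{rmk_hp3}) that all slopes of $u_1^\dagger(\co)$ avoid the open interval between the first and last slopes of $u_2(\co)$, so that when we form $u_1^\dagger(\co)\oplus\nu_2'(\co)\oplus\ord^{\epsilon_\co}$, the "extra" slopes contributed by $u_1^\dagger$ and by $\ord^{\epsilon_\co}$ sit at the two ends and do not interleave with the variable part $\nu_2'(\co)$. This rigidity is what lets one read off $\nu_2'(\co)$ from any Newton polygon $b(\co)\in B_3(\co)$ with $\Phi(\nu_2)(\co)\le b(\co)\le u_3(\co)$: strip off the prescribed end slopes coming from $u_1^\dagger(\co)$ and $\ord^{\epsilon_\co}$, and what remains is forced to be a valid element of $B_2(\co)$ above $\nu_2(\co)$.

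The main obstacle I anticipate is the surjectivity/order-reversal step: showing that every $b\in B_3$ with $\Phi(\nu_2)\le b\le u_3$ actually arises as $\Phi(\nu_2')$ with $\nu_2'\in B_2$, and that $\nu_2'\le\nu_2''$ iff $\Phi(\nu_2')\le\Phi(\nu_2'')$. The subtlety is that $B_2(\co)$ and $B_3(\co)$ are constrained by the group-theoretic (Kottwitz) conditions — not every abstract "polygon" between two elements is realized — so one cannot merely manipulate multisets of slopes; one must check that the candidate $\nu_2'(\co)$ obtained by stripping end slopes genuinely lies in $B_2(\co)=B(H_{\QQ_p,2},\mu_{\hh,2})(\co)$. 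Here the point is that hypothesis (C), together with the explicit slope/multiplicity formulas of \Cref{Sformulau} applied to $\cf_2$ and to $\cf_3=\cf_1^\dagger+\cf_2+\delta$, shows that the poset $B_3(\co)$ restricted to the interval $[\Phi(\nu_2)(\co),u_3(\co)]$ is isomorphic as a poset to $[\nu_2(\co),u_2(\co)]\subseteq B_2(\co)$ — the end slopes being "frozen" means the combinatorics of admissible rearrangements is unchanged. Once this orbit-wise poset isomorphism is in hand, taking products over $\co\in\CO$ gives the global poset isomorphism between $[\nu_2,u_2]$ and $[\Phi(\nu_2),u_3]$, hence equality of the maximal chain lengths, hence \eqref{Ecodimequal} by \eqref{eqn_codim} and \eqref{eqn_codim_cpt}. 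I would also record at the outset, for completeness, that $\Phi(\nu_2)\in B_3$ and $u_3=\Phi(u_2)$, so that the interval $[\Phi(\nu_2),u_3]$ is exactly the image of $[\nu_2,u_2]$ and both endpoints match up correctly.
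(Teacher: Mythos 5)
Your proposal is correct and follows essentially the same route as the paper: reduce \eqref{Ecodimequal} via \eqref{eqn_codim} and \eqref{eqn_codim_cpt} to showing that $t\mapsto u_1^d\oplus t\oplus\ord^\epsilon$ gives an order-preserving bijection from $B_2(\nu_2)$ onto $B_3(u_1^d\oplus\nu_2\oplus\ord^\epsilon)$, with hypothesis (B) matching the top elements via \Cref{balanced} and hypothesis (C) freezing the end slopes orbit by orbit so that surjectivity follows by stripping them off. The only cosmetic difference is that you build $\oplus\,\ord^\epsilon$ into the map from the start, whereas the paper treats $\epsilon=0$ first and then notes the same argument applies.
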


The following lemma is a reformulation of \Cref{balanced}.

\begin{lemma}
	  Hypothesis (B) is equivalent to the assumption that \eqref{Ecodimequal} holds for $\nu_2=u_2$. 
\end{lemma}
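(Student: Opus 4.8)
The plan is to deduce this lemma directly from \Cref{balanced} together with the codimension formula \eqref{eqn_codim} and its extension to the toroidal compactification \eqref{eqn_codim_cpt}. Recall that \Cref{balanced} states that the equality $u_3 = u_1^d \oplus u_2 \oplus \ord^\epsilon$ holds if and only if the pair $\gamma_1, \gamma_2$ is balanced, i.e.\ satisfies hypothesis (B). So it suffices to show that the statement ``$u_3 = u_1^d \oplus u_2 \oplus \ord^\epsilon$'' is equivalent to the statement ``\eqref{Ecodimequal} holds for $\nu_2 = u_2$''.

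For the first direction, suppose $u_3 = u_1^d \oplus u_2 \oplus \ord^\epsilon$. The right-hand side of \eqref{Ecodimequal} with $\nu_2 = u_2$ is then $\codim(\overline{\Sh}_3[u_3], \overline{\Sh}_3)$, which by \eqref{eqn_codim_cpt} equals $\codim(\Sh_3[u_3], \Sh_3)$. Since $u_3$ is the $\mu$-ordinary (hence maximal) Newton polygon of $\Sh_3$, the stratum $\Sh_3[u_3]$ is open and dense, so its codimension is $0$. Likewise, the left-hand side $\codim(\Sh_2[u_2], \Sh_2) = 0$ since $u_2$ is $\mu$-ordinary in $\Sh_2$. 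Thus \eqref{Ecodimequal} holds for $\nu_2 = u_2$. Conversely, if \eqref{Ecodimequal} holds for $\nu_2 = u_2$, then the right-hand side equals $\codim(\Sh_2[u_2],\Sh_2) = 0$, so $\overline{\Sh}_3[u_1^d \oplus u_2 \oplus \ord^\epsilon]$ has codimension $0$ in $\overline{\Sh}_3$; by \eqref{eqn_codim} (transported to the compactification) a Newton polygon stratum has codimension $0$ if and only if it is the $\mu$-ordinary one, so $u_1^d \oplus u_2 \oplus \ord^\epsilon = u_3$.

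One point requiring a little care: I should confirm that $u_1^d \oplus u_2 \oplus \ord^\epsilon$ actually lies in $B_3$ (so that $\overline{\Sh}_3[u_1^d \oplus u_2 \oplus \ord^\epsilon]$ is a genuine, nonempty Newton stratum and the codimension formula applies) — but this is exactly \Cref{clutchNP}, which gives $u_1^d \oplus u_2 \oplus \ord^\epsilon \leq u_3$ in $B_3$. I also use implicitly that $u_3$ is the unique element of $B_3$ of length $0$, which is immediate from the definition of length and the fact that $u_3$ is the maximal element. The argument is essentially a translation exercise, so there is no real obstacle; the only thing to be careful about is invoking \eqref{eqn_codim_cpt} correctly on the minimal versus toroidal compactification and noting that for the purposes of codimension of Newton strata one may work on $\overline{\Sh}_3$ as stated in \Cref{codimension1}.

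\begin{proof}
By \Cref{clutchNP}, the Newton polygon $u_1^d\oplus u_2\oplus \ord^\epsilon$ lies in $B_3$, so
$\overline{\Sh}_3[u_1^d\oplus u_2\oplus\ord^\epsilon]$ is a nonempty Newton polygon stratum and, by
\eqref{eqn_codim} and \eqref{eqn_codim_cpt}, its codimension in $\overline{\Sh}_3$ equals
${\rm length}(u_1^d\oplus u_2\oplus\ord^\epsilon)$. By the definition of length and the fact that
$u_3$ is the maximal element of $B_3$, this length is $0$ if and only if
$u_1^d\oplus u_2\oplus\ord^\epsilon=u_3$. Likewise $\codim(\Sh_2[u_2],\Sh_2)={\rm length}(u_2)=0$
because $u_2$ is the $\mu$-ordinary Newton polygon of $\Sh_2$. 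Hence \eqref{Ecodimequal} holds for
$\nu_2=u_2$ if and only if $u_3=u_1^d\oplus u_2\oplus\ord^\epsilon$, which by \Cref{balanced} holds
if and only if the pair $\gamma_1,\gamma_2$ satisfies hypothesis (B).
\end{proof}
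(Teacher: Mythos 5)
Your proof is correct and matches the paper's (essentially implicit) argument: the paper treats this lemma as a direct reformulation of \Cref{balanced}, and you have simply made explicit the translation via $\codim = \length$ from \eqref{eqn_codim}--\eqref{eqn_codim_cpt} and the fact that a stratum has codimension $0$ precisely when it is the $\mu$-ordinary one. The care you take in citing \Cref{clutchNP} to ensure $u_1^d\oplus u_2\oplus\ord^\epsilon\in B_3$ is appropriate and consistent with the paper's setup.
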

In fact, the proof below shows that if hypothesis (B) does not hold, then \eqref{Ecodimequal} is false for all $\nu_2\in B_2$. 

	\begin{proof}[Proof of \Cref{codimension1}] 
For any Kottwitz set $B$ and any $\nu\in B$, let $B(\nu)=\{t \in B\mid t \geq \nu\}$.

We first prove the case when $\epsilon=0$. 
Consider the map 
$\Sigma:B_2 \to B_3,$ where $t\mapsto u_1^d\oplus  t$.

 We first note that $\Sigma$ is an order-preserving injection.\footnote{For convenience, we use hypothesis (C) to construct the order-preserving map \eqref{EmaponKottwitz}; however, this part can be proved without using this hypothesis.}
  Let $t\in B_2$. For each orbit $\co$  in $\CO$,
let $q_1=q_1(\co)$ (resp. $q_1'=q_1'(\co)$) be the number of distinct slopes of 
$u_1(\co)$ in $[0, \lambda_{1st}(u_2(\co)]$ (resp. $[\lambda_{last}(u_2(\co)), 1]$).
By hypothesis~(C), for the Newton polygon $u_1^d\oplus t$, the first $q_1$ and the last $q_1'$ slopes of $u_1(\co)^d\oplus t(\co)$ are the slopes of $u_1(\co)^d$ with the same multiplicities and the rest are the slopes of $t(\co)$ with the same multiplicities.
So, if $t\leq t'$ in $B_2$, then 
$u_1^d \oplus  t\leq u_1^d \oplus  t'$ in $B_3$.
In particular, the map $\Sigma$ induces an injection on the ordered sets
\begin{equation} \label{EmaponKottwitz}
B_2(\nu_2) \to B_3(u_1^d\oplus  \nu_2), \ t\mapsto u_1^d\oplus  t.
\end{equation}

By \eqref{eqn_codim}, to conclude, it suffices to prove that under
hypotheses~(B) and (C) the map in \eqref{EmaponKottwitz} is also surjective. 
By \Cref{balanced}, hypothesis~(B) implies that $u_3=u_1^d\oplus u_2$.
Hence, for a Newton polygon $v\in B_3(u_1^d\oplus  \nu_2)$, then
$u_1^d\oplus\nu_2\leq v\leq u_3=u_1^d\oplus u_2$.
By the paragraph after \Cref{NPco}, 
\begin{eqnarray*}
u_1(\co)^d\oplus \nu_2(\co)  \leq v(\co)  \leq 
u_1(\co)^d\oplus u_2(\co).
\end{eqnarray*}

By hypothesis (C), the inequalities above imply that $v(\co)$ and $u_1(\co)^d$
share the first $q_1$ and last $q_1'$ slopes, with the same multiplicities
except for the $q_1$-th slope (resp. $q_2$-th slope) which may occur with higher multiplicity in the former if $u_1(\co)$ has slope $\lambda_{1st}(u_2(\co))$ (resp. $\lambda_{last}(u_2(\co))$).
We deduce that each $v\in B_3(u_1^d\oplus  \nu_2) $ is of the form $v=u_1^d\oplus t$ for some  $t\in B_2(\nu_2)$; 
thus the map in \eqref{EmaponKottwitz} is surjective.

If $\epsilon \not = 0$, by \eqref{eqn_codim_cpt}, the same argument still applies,  with $u_1^d$ replaced by $u_1^d\oplus \ord^\epsilon$.
\end{proof}

\begin{remark}\label{why}
Let $\gamma$ be a monodromy datum, of signature $\cf$. 
For $d>1$, let $\gamma^\dagger$ be the induced monodromy datum, of signature $\cf^\dagger$, as in \Cref{Nind}. 
The map $(\cdot)^\dagger: B(\cf)\to B(\cf^\dagger)$, $\nu\mapsto \nu^d$, is injective and order-preserving, but is not surjective in general. If $(\cdot)^\dagger$ is not surjective, then, by \eqref{eqn_codim}, there exists $\nu\in B$ such that $\codim (\Sh[\nu],\Sh)\neq \codim (\Sh^\dagger[\nu^d],\Sh^\dagger)$.
For example, if $\cf=(1)$, and $d=2$, this happens when $\nu=(1/2,1/2)$.
\end{remark}

\subsection{The second main result}

The next result also provides a partial positive answer to \Cref{Coortconj} when $\epsilon = 0$.

\begin{theorem}\label{differentm}
	Let $\gamma_1,\gamma_2$ be an ordered pair of monodromy data as in \Cref{Dep1}.
		 Assume it satisfies hypotheses (A), (B), and (C).  Let $\epsilon=d(r-1)$. 
	 Consider the monodromy datum $\gamma_3$ from \Cref{Dmd3}. 	Let $\nu_2 \in B_2$.
	If $Z^{\circ}_1[u_1]$ and $Z^{\circ}_2[\nu_2]$ are non-empty, and  $Z^{\circ}_2[\nu_2]$ contains an irreducible component $\Gamma_2$ such that 
	\begin{equation} \label{Ecodimcond}
	\codim(\Gamma_2, Z_2)=\codim(\Sh_2[\nu_2],\Sh_2),
	\end{equation}
	then $Z^{\circ}_3[ u_1^d\oplus  \nu_2 \oplus \ord^{\epsilon}]$  is non-empty and contains an irreducible component $\Gamma_3$ 	such that 
	\[\codim(\Gamma_3, Z_3)=\codim(\Sh_3[ u_1^d\oplus  \nu_2 \oplus \ord^{\epsilon}],\Sh_3).\]
\end{theorem}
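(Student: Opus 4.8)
The plan is to combine the clutching morphism from \Cref{Pclutchadd} (which gives the combinatorial/geometric setup) with the codimension identity from \Cref{codimension1} (which controls the Newton polygon strata), upgrading \Cref{thm_muord} by keeping track of an irreducible component of the right codimension. First I would recall that by \Cref{Lmdgf} and \Cref{Lsig3} the triple $\gamma_3$ from \Cref{Dmd3} is a monodromy datum with signature $\cf_3$, so $\tilde{Z}_3$, $Z_3$, and the Shimura stack $\Sh_3=\Sh_3(\mu_{m_3},\cf_3)$ are defined, and by \Cref{Pclutchadd} there is a clutching morphism $\kappa:\tilde{Z}_1\times\tilde{Z}_2\to\tilde{Z}_3$ whose image lies in the boundary of $\tilde{Z}_3^\circ$. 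Restricting $\kappa$ to $\tilde{Z}_1^\circ[u_1]\times\tilde{Z}_2^\circ[\nu_2]$ and invoking \Cref{PNPC3}, every point in the image represents a curve $C_3$ with $\nu(C_3)=u_1^d\oplus\nu_2\oplus\ord^\epsilon$. Since $\tilde{Z}_1^\circ[u_1]$ is nonempty and $\tilde{Z}_2^\circ[\nu_2]$ contains the component $\Gamma_2$ (the preimage of a component of $Z_2^\circ[\nu_2]$ realizing the codimension in \eqref{Ecodimcond}), the image $W:=\kappa(\tilde{Z}_1^\circ[u_1]\times\tilde{\Gamma}_2)$ is a nonempty locally closed substack of $\tilde{Z}_3$ lying in $\tilde{Z}_3[u_1^d\oplus\nu_2\oplus\ord^\epsilon]$ and in the boundary of $\tilde{Z}_3^\circ$.

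Next I would do the dimension bookkeeping. Since $\kappa$ is a closed immersion (it is built from the standard clutching immersions, cf.\ \Cref{Sindprec}, composed with the finite forgetful maps), $\dim W=\dim(\tilde{Z}_1^\circ[u_1])+\dim\tilde\Gamma_2$. Using $\dim\tilde{Z}_1^\circ[u_1]=\dim\tilde{Z}_1$ (the $\mu$-ordinary stratum is open dense, \Cref{muordsmooth}) and $\codim(\tilde\Gamma_2,\tilde{Z}_2)=\codim(\Sh_2[\nu_2],\Sh_2)$ (from \eqref{Ecodimcond}, using that $\tilde{Z}_2\to Z_2$ is finite), together with the relation $\dim\tilde{Z}_3=\dim\tilde{Z}_1+\dim\tilde{Z}_2+1$ that governs this clutching configuration (the image of $\kappa$ is a boundary divisor, so $W$ sits in codimension one more than the product relative to $\tilde{Z}_3^\circ$), I get
\[
\codim(W,\tilde{Z}_3)=\codim(\tilde\Gamma_2,\tilde{Z}_2)+1=\codim(\Sh_2[\nu_2],\Sh_2)+1.
\]
Comparing with \Cref{codimension1}, which under hypotheses (A), (B), (C) gives $\codim(\overline{\Sh}_3[u_1^d\oplus\nu_2\oplus\ord^\epsilon],\overline{\Sh}_3)=\codim(\Sh_2[\nu_2],\Sh_2)$, and noting that the boundary stratum containing $\kappa$'s image contributes exactly one to the codimension (equivalently, a deformation smoothing the node lowers codimension in $\Sh_3$ versus $\Sh_3^*$ by one), the component of $\tilde{Z}_3[u_1^d\oplus\nu_2\oplus\ord^\epsilon]$ containing $W$ has codimension at most $\codim(\Sh_3[u_1^d\oplus\nu_2\oplus\ord^\epsilon],\Sh_3)$ in $\tilde{Z}_3$; lower semicontinuity of the Newton polygon forces this codimension to be at least $\codim$ of the stratum in $\Sh_3$ as well, hence equality. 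Let $\Gamma_3$ be (the image in $Z_3$ of) the irreducible component of $\tilde{Z}_3[u_1^d\oplus\nu_2\oplus\ord^\epsilon]$ containing $W$. A generic point of $\Gamma_3$ then represents a curve with the prescribed Newton polygon; because $\tilde{Z}_3^\circ$ is open dense in $\tilde{Z}_3$ and $\Gamma_3$ is not contained in the complement (here one uses that $W$, while in the boundary, is a proper closed subset of $\Gamma_3$ by the strict codimension count, so $\Gamma_3$ meets $\tilde{Z}_3^\circ$), the smooth locus $Z_3^\circ[u_1^d\oplus\nu_2\oplus\ord^\epsilon]$ meets $\Gamma_3$ in a dense open, giving the desired nonempty smooth stratum with a component of the right codimension.

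The main obstacle, and the step requiring the most care, is the transition from the \emph{minimal} compactification statement of \Cref{codimension1} (and from the singular clutched curve $C_3$, which when $\epsilon\neq0$ does not even have compact type) to a statement about \emph{smooth} curves. Concretely: one must argue that the component $\Gamma_3$ of the Newton stratum that contains the boundary locus $W$ actually has interior points — i.e., that one can deform the nodal curve $C_3$ to a smooth $\mu_{m_3}$-cover \emph{without increasing} the Newton polygon. The codimension equality \eqref{Ecodimequal} is precisely what makes this work: it certifies that $W$ has strictly larger codimension in $\tilde{Z}_3$ than the Newton stratum $\tilde{Z}_3[u_1^d\oplus\nu_2\oplus\ord^\epsilon]$ does, so $W$ cannot be a full component of that stratum, and the generic point of the ambient component must be smooth (since the boundary $\tilde{Z}_3\setminus\tilde{Z}_3^\circ$ is a divisor and $\Gamma_3\not\subseteq$ it). I would need to handle the case $\epsilon\neq0$ by working on $\overline{\Sh}_3$ and using \eqref{eqn_codim_cpt} exactly as in the proof of \Cref{codimension1}, and to double-check the "$+1$" discrepancy in codimensions between $Z_3$ and $\Sh_3$ versus $\tilde Z_3$ and $\overline{\Sh}_3$ is absorbed consistently, so that the final equality $\codim(\Gamma_3,Z_3)=\codim(\Sh_3[u_1^d\oplus\nu_2\oplus\ord^\epsilon],\Sh_3)$ holds on the nose.
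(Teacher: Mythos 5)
Your overall strategy matches the paper's: clutch $\tilde Z_1[u_1]$ with $\tilde\Gamma_2$, compute that the image $W$ has codimension $l+1$ in $\tilde Z_3$ where $l=\codim(\Sh_2[\nu_2],\Sh_2)$, invoke \Cref{codimension1} to identify $l$ with the expected codimension of the target stratum, and conclude that the component $\tilde\Gamma_3$ containing $W$ has codimension $l$ and escapes the boundary. But the pivotal inequality is never actually established. You need $\dim\tilde\Gamma_3\ge\dim\tilde Z_3-l$, i.e.\ that the Newton stratum on $\tilde Z_3$ has at most the expected codimension; you attribute this to ``lower semicontinuity'' and to the boundary ``contributing exactly one to the codimension,'' but semicontinuity only gives closedness of the locus where the polygon is at least $\nu$ and says nothing about codimension, and nothing you write rules out the possibility that $\tilde Z_3[u_1^d\oplus\nu_2\oplus\ord^{\epsilon}]$ equals $W$ and lies entirely inside the boundary, in which case no smooth curve would have this Newton polygon. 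The paper supplies this bound with the de Jong--Oort purity theorem \cite[Theorem 4.1]{JO} combined with \eqref{eqn_codim}, and the complementary bound $\dim\tilde\Gamma_3\le\dim W+1$ with the intersection-dimension argument of \cite[p.\ 614]{V:stack} applied to $W$ as an open and closed substack of $\kappa(\tilde Z_1,\tilde Z_2)\cap\tilde Z_3[u_1^d\oplus\nu_2\oplus\ord^{\epsilon}]$. Purity is the missing idea, not a detail to be double-checked.

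Two further gaps. First, showing $\tilde\Gamma_3\supsetneq\kappa(\tilde Z_1[u_1],\tilde\Gamma_2)$ only rules out containment in that one boundary divisor; the boundary of $\tilde Z_3^\circ$ is a union of images of several clutching maps, and since $\tilde\Gamma_3$ is irreducible you must separately exclude that it lies in the image of a different one. The paper does this by observing that such containment would force every point of $\kappa(\tilde Z_1[u_1],\tilde\Gamma_2)$ to represent a cover of a genus-$0$ curve with at least three components, hence would force $Z_1^\circ[u_1]$ or $\Gamma_2\subset Z_2^\circ[\nu_2]$ to be empty, contrary to hypothesis. Second, your relation $\dim\tilde Z_3=\dim\tilde Z_1+\dim\tilde Z_2+1$ fails when $r=m_1$: there the generalized monodromy data carry the zero entries $a_1(N_1)=a_2(1)=0$, so $\dim Z_i=N_i-4$ and $\dim Z_3=\dim Z_1+\dim Z_2+3$, and the paper runs the count through the one-dimensional fibers of the forgetful maps of \Cref{plus1}. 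The complication you flag for $\epsilon\neq 0$ (minimal versus toroidal compactification) is real but is already absorbed into \Cref{codimension1} via \eqref{eqn_codim_cpt}; the $r=m_1$ bookkeeping is the case split your argument actually needs and does not have.
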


\begin{remark}\label{rmkcodim}
		As seen in \Cref{Sexceptional}, hypothesis (C) is not  a necessary condition and it can occasionally be removed.		Specifically,
		\Cref{differentm} still holds with hypothesis (C) replaced by the weaker (but harder to verify) assumption that \eqref{Ecodimequal} holds for the given non $\mu$-ordinary Newton polygon $\nu_2\in B_2$. 
\end{remark}

\begin{remark}\label{rmk_goodinput}
	If $Z_2$ is one of Moonen's special families from \cite{moonen} and $Z_2^{\circ}[\nu_2]$ is non-empty, 
	then every irreducible component of $Z_2^{\circ}[\nu_2]$ satisfies the codimension condition \eqref{Ecodimcond}.
\end{remark}

\begin{proof}[Proof of \Cref{differentm}]
By \Cref{plus1}, without loss of generality, we may assume that the inertia types $a_1$ and $a_2$ 
contain no zero entries if $r < m_1$ and no zero entries other than $a_1(N_1)=a_2(1)=0$ if $r=m_1$.
Set $l:= \codim(\Sh_2[\nu_2],\Sh_2)$.
Recall the clutching morphism $\kappa: \tilde{Z}_1 \times \tilde{Z}_2 \to \tilde{Z}_3$ 
from \Cref{Pclutchadd}.
Note that
\begin{equation} \label{Edimcompare}
\dim(Z_3)  = N_3-3=(N_1-3)+(N_2-3)+1.
\end{equation}

We distinguish three cases: $\epsilon=0$, $\epsilon\neq 0$ and $r< m_1$, and $\epsilon\neq 0$ and $r=m_1$.

\subsubsection*{Assume $\epsilon=0$}

Then \eqref{Edimcompare} implies that
\begin{equation} \label{Edimcompare2}
\dim(Z_3)  =  \dim(Z_1)+\dim(Z_2) +1=\dim(\tilde{Z}_1) + \dim(\tilde{Z}_2) +1.
\end{equation}
By \Cref{Dtype3},
$g_3=dg_1+g_2$ and the formula for $\cf_3$ is in \eqref{signaturesum}.
The clutching morphism $\kappa$ is compatible with the morphism
$\iota: \Sh_1\times \Sh_2\rightarrow \Sh_3$ given by $\iota(\CX_1,\CX_2)=\CX_1^d\oplus \CX_2$.
Since the map $\tilde{Z}_i\rightarrow \overline{Z}_i$ is finite, $\dim(\tilde{Z}_i[\nu_i])=\dim(\overline{Z}_i[\nu_i])$ for any $\nu_i\in B_i$.

	 By \Cref{codim}, $l=\codim(\Sh_3[u_1^d\oplus  \nu_2],\Sh_3)$. Let $\tilde{\Gamma}_2$ denote the Zariski closure 
of the preimage of $\Gamma_2$ in $\tilde{Z}_2[\nu_2]$. 
Apriori, $\tilde{\Gamma}_2$ may not be irreducible, in which case we replace it
by one of its irreducible components.
Then $\dim(\tilde{\Gamma}_2)=\dim(\Gamma_2)$.
	
	Let $W:=\kappa(\tilde{Z}_1[u_1], \tilde{Z}_2[\nu_2])$.
	Since $W \subseteq \tilde{Z}_3[u_1^d\oplus  \nu_2]$,
	then $\tilde{Z}_3[u_1^d\oplus  \nu_2]$ is non-empty.
	By \eqref{Edimcompare2}, $\kappa(\tilde{Z}_1, \tilde{Z}_2)$ has codimension $1$ in $\tilde{Z}_3$.
	In addition, $W$ 
	is an open and closed substack of the intersection of 
	$\kappa(\tilde{Z}_1, \tilde{Z}_2)$ and $\tilde{Z}_3[u_1^d\oplus\nu_2]$.
	 By \cite[page 614]{V:stack}, every irreducible component of 
	 $W$ 
	 has codimension at most $1$ in the irreducible component of $\tilde{Z}_3[u_1^d\oplus  \nu_2]$ which contains it. 
	 Note that $\kappa(\tilde{Z}_1[u_1], \tilde{\Gamma}_2)$ is an irreducible component of $W$.
	 Let $\tilde{\Gamma}_3$ be the irreducible component of 
	 $\tilde{Z}_3[u_1^d\oplus  \nu_2]$ which contains 
	 $\kappa(\tilde{Z}_1[u_1], \tilde{\Gamma}_2)$. 
	It follows that ${\rm codim}(\kappa(\tilde{Z}_1[u_1], \tilde{\Gamma}_2), \tilde{\Gamma}_3) \leq 1$.
	 So 
	\[\dim(\tilde{\Gamma}_3)=\begin{cases}  \dim(Z_1)+\dim(Z_2)-l &\text{ if $\tilde{\Gamma}_3=\kappa(\tilde{Z}_1[u_1], \tilde{\Gamma}_2)$,} \\ \dim(Z_1)+\dim(Z_2)-l +1 &\text{ otherwise.}
	\end{cases}\]
	
	On the other hand, for all  $b\in B_3$, by \eqref{eqn_codim} and the de Jong--Oort purity theorem \cite[Theorem 4.1]{JO}, the codimension of any irreducible component of $\tilde{Z}_3[b]$ in $\tilde{Z}_3$ is no greater than $\length (b)= \codim(\Sh_3[b],\Sh_3)$. 
	For $b=u_1^d\oplus  \nu_2$, by \eqref{Edimcompare2},
	this yields  \[\dim(\tilde{\Gamma}_3) \geq \dim(Z_3)-l=\dim(Z_1)+\dim(Z_2)+1-l.\]
	We deduce that ${\rm codim}(\tilde{\Gamma}_3, \tilde{Z}_3) = l$
	and that $\tilde{\Gamma}_3$ strictly contains $\kappa(\tilde{Z}_1[u_1], \tilde{\Gamma}_2)$. 
	
	Let $\bar{\Gamma}_3$ denote the image of $\tilde{\Gamma}_3$ via the forgetful map $\tilde{Z}_3\rightarrow \bar{Z}_3$. Define $\Gamma_3=\bar{\Gamma}_3\cap Z_3^{\circ}$. To finish the proof, we only need to show that $\Gamma_3$ is non-empty.
	Therefore, it suffices to show that $\tilde{\Gamma}_3$ is not contained in the image of any other clutching 
	map from \Cref{Pclutchadd}.  Since $r=1$, by \Cref{PNPC3} the points in $W$ represent curves of compact type, 
	thus $\tilde{\Gamma}_3\cap\tilde{Z}^c_3$ is non empty. 
	
	To finish, we argue by contradiction;  
	suppose $\tilde{\Gamma}_3$ is contained in the image of any of the other clutching maps in $\tilde{Z}^c_3$.
This would imply that all points of $\kappa(\tilde{Z}_1[u_1], \tilde{\Gamma}_2)$
represent $\mu_m$-covers of a curve of genus $0$ comprised of at least $3$ projective lines.
This is only possible if all points of either $\tilde{Z}_1[u_1]$ or $\tilde{\Gamma}_2$
represent $\mu_m$-covers of a curve of genus $0$ comprised of at least $2$ projective lines.
This would imply that either $Z^{\circ}_1[u_1]$ or $\Gamma_2 \subset Z_2^\circ[\nu_2]$ is empty, which contradicts the hypotheses of the theorem. 

\subsubsection*{Assume $\epsilon\neq0$ and $r<m_1$}
	By the same argument as when $\epsilon=0$, there exists an irreducible component $\tilde{\Gamma}_3$ of $\tilde{Z}_3[u_1^d \oplus \nu_2 \oplus \ord^{\epsilon}]$ of codimension $l$ such that $\tilde{\Gamma}_3$ strictly contains $\kappa(\tilde{Z}_1[u_1], \tilde{\Gamma}_2)$.  To finish the proof, we only need to show that $\tilde{\Gamma}_3$ is not contained in the boundary of $\tilde{\CM}_{\mu_m}$. 
	As before, $\tilde{\Gamma}_3$ is not contained in the image of any of the 
	clutching maps in $\tilde{Z}^c_3$.
	Suppose that $\tilde{\Gamma}_3$ is contained in the image of any of the clutching maps not in $\tilde{Z}^c_3$.
	By keeping careful track of the toric rank, one can check that 
	this implies that 
	the points of either $\tilde{Z}_1[u_1]$ or $\tilde{\Gamma}_2$ represent $\mu_m$-covers
	of curves that are not of compact type.
	This would imply that either $Z^{\circ}_1[u_1]$ or $\Gamma_2 \subset Z_2^\circ[\nu_2]$ is empty, which contradicts the hypotheses of the theorem.

\subsubsection*{Assume $\epsilon\neq 0$ and $r=m_1$}
By \Cref{plus1}, for $i=1,2$, 
the fibers of the forgetful map 
$f_i:\tilde{Z}^\circ_{i}\rightarrow {Z}_i^\circ$ have pure dimension $1$. 
Let $\tilde{\Gamma}_{2}'$ be an irreducible component of the preimage via $f_2$ of $\Gamma_2$; it is in $\tilde{Z}^\circ_{2}[\nu_2]$. 
Let $\tilde{\Gamma}_{1}'$ be an irreducible component of the
preimage via $f_1$ of $Z^\circ_1[u_1]$; it is in $\tilde{Z}^\circ_{1}[u_1]$.
Then ${\rm dim}(\tilde{\Gamma}_{2}') = {\rm dim}(\Gamma_2)+1$.
Similarly, ${\rm dim}(\tilde{\Gamma}_{1}') = {\rm dim}(Z_1[u_1])+1$.
	
	Let $\tilde{\Gamma}_3$ be the irreducible component of $\tilde{Z}_3[u_1^d\oplus  \nu_2 \oplus \ord^{\epsilon}]$ 
	that contains the image $\kappa(\tilde{\Gamma}'_1, \tilde{\Gamma}'_{2})$. As before, $\dim(\tilde{\Gamma}_3) \geq \dim(\kappa(\tilde{\Gamma}'_1, \tilde{\Gamma}'_{2}))+1$. The rest of the proof follows in the same way as when $r<m_1$, by taking $\Gamma_3=\overline{\Gamma}_3\cap Z^{\circ}_3$, where $\overline{\Gamma}_3$ is the image of $\tilde{\Gamma}_3$ via the forgetful map. To obtain the dimension inequality, note that 
	\[{\rm dim}(\kappa(\tilde{\Gamma}'_1, \tilde{\Gamma}'_{2}))=1+\dim(Z_1[u_1]) + 1 +\dim(\Gamma_2)=2+\dim(Z_1) +\dim(Z_2) - l,\] where $l=\codim (\Sh_2[\nu_2], \Sh_2)$. 
	In this case, ${\rm dim}(Z_i)=N_i-4$ for $i=1,2$.  By \eqref{Edimcompare}, 
	\begin{equation} 
	{\rm dim}(Z_3) = (N_1-4) + (N_2-4) + 3 = {\rm dim}(Z_1) + {\rm dim}(Z_2) +3.
	\end{equation}
	On the other hand, by the de Jong--Oort purity theorem \cite[Theorem 4.1]{JO},  
		\[\dim(\tilde{\Gamma}_3)\geq\dim(Z_3) - l=\dim(Z_1)+\dim(Z_2) + 3-l=
	{\rm dim}(\kappa(\tilde{\Gamma}'_1, \tilde{\Gamma}'_{2}))+1.\qedhere\]
\end{proof}

\subsection{Infinite clutching for non $\mu$-ordinary}\label{sec_infclut_nu}

This section is similar to Section \ref{Sinfclut}, in that 
we find situations in which \Cref{differentm} 
can be implemented recursively, infinitely many times, except that we now focus on non $\mu$-ordinary Newton polygons.

Let $\gamma=(m,N,a)$ be a monodromy datum and let $\nu \in B(\gamma)$.

\begin{corollary}  \label{Cadd01non} (Extension of \Cref{inf-ord})
	Assume $Z^\circ(\gamma)[\nu]$ is non-empty and contains an irreducible component 
	$\Gamma$ such that $\codim(\Gamma, Z(\gamma))=\codim(\Sh[\nu],\Sh)$. 
	Then for any $n$ in the semi-group generated by $\{m-t \colon \ t\mid m\}$,
	there exists a $\mu_m$-cover $C \to {\mathbb P}^1$ over $\overline{\mathbb{F}}_p$ 
	where $C$ is a smooth curve with Newton polygon $\nu \oplus \ord^n$.
		\end{corollary}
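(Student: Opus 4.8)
The plan is to run the construction from the proof of \Cref{inf-ord} verbatim, invoking \Cref{differentm} in place of \Cref{thm_muord}; the only genuinely new inputs are that hypothesis (C) must be checked in addition to (A) and (B), and that the codimension equality \eqref{Ecodimcond} coming out of \Cref{differentm} must be recognized as legitimate input for the next step so that the construction iterates. Fix $c$ with $1\le c\le m-1$ and set $t=\gcd(m,c)$. Let $\gamma_1=(m/t,3,(c/t,(m-c)/t,0))$, a generalized monodromy datum with $g_1=0$ by \eqref{Egenus}, so $\cf_1\equiv 0$ and the $\mu$-ordinary polygon $u_1$ of $\Sh_1$ is the empty multiset; let $\gamma_2=(m,N+1,a')$ with $a'(1)=0$ and $a'(i)=a(i-1)$ for $2\le i\le N+1$. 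By \Cref{plus1}, $Z(\gamma_2)=Z(\gamma)$, $\Sh_2=\Sh$, $\cf_2=\cf$, $u_2=u$ and $B_2=B(\gamma)$ (the extra entry $0$ contributes nothing in \eqref{DMeqn}), so the hypothesis on the triple $(\Gamma,Z(\gamma),\Sh)$ becomes exactly the codimension hypothesis of \Cref{differentm} on a component $\Gamma_2\subset Z_2^\circ[\nu_2]$ with $\nu_2:=\nu$. Also $Z_1^\circ[u_1]=Z_1^\circ\ne\emptyset$, since $Z_1$ parametrizes smooth curves of genus $0$.

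Next I would check that the ordered pair $(\gamma_1,\gamma_2)$ fits the setup of \Cref{Dep1} and satisfies hypotheses (A), (B), (C). Here $m_1=m/t\mid m=m_2$, so $d=t$; since $a_1(N_1)=0$ we have $r=\gcd(m_1,a_1(N_1))=m_1$, hence $\epsilon=d(r-1)=m-t$; also $m_3={\rm lcm}(m_1,m_2)=m$, $d_1=t$, $d_2=1$. Hypothesis (A) is the identity $d_1a_1(N_1)+d_2a_2(1)=0$. Hypothesis (B) holds because $\cf_1^\dagger\equiv 0$, so the inequalities of \Cref{def_hp2} are vacuous. Hypothesis (C) holds because $u_1(\co)$ is empty for every orbit $\co$, so $u_1\ll_{\text{(C)}}u_2$ by the convention in \Cref{def_hp3}. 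The monodromy datum produced by \Cref{Dmd3} is $\gamma_3=(m,N+2,(c,m-c,a(1),\dots,a(N)))$, which is again a monodromy datum of degree $m$.

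Then I would apply \Cref{differentm} to $(\gamma_1,\gamma_2)$ with $\nu_2=\nu$: since $u_1^d$ is the empty multiset, the conclusion is that $Z_3^\circ[\nu\oplus\ord^{m-t}]$ is non-empty and contains an irreducible component $\Gamma_3$ with $\codim(\Gamma_3,Z_3)=\codim(\Sh_3[\nu\oplus\ord^{m-t}],\Sh_3)$. Because $\gamma_3$ again has degree $m$ and $(\Gamma_3,Z_3,\Sh_3)$ satisfies the same codimension condition that was assumed for $(\Gamma,Z(\gamma),\Sh)$, the construction repeats with $(\gamma,\nu,\Gamma)$ replaced by $(\gamma_3,\nu\oplus\ord^{m-t},\Gamma_3)$. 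Iterating and letting $c$ range over $\{1,\dots,m-1\}$, each step increases the multiplicity of $\ord$ by $m-t$ for an arbitrary proper divisor $t$ of $m$; since $m-t=0$ for $t=m$, the exponents $n$ obtained this way are exactly those in the semi-group generated by $\{m-t:t\mid m\}$, and for each such $n$ the component $\Gamma_3\subset Z_3^\circ$ produces the desired smooth $\mu_m$-cover of $\PP$ over $\overline{\FF}_p$ with Newton polygon $\nu\oplus\ord^n$.

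The main obstacle is really bookkeeping: one must recognize that \eqref{Ecodimcond} is both consumed and produced by \Cref{differentm}, so the induction loop closes, and one must verify hypothesis (C) for the explicit pair above — but (C) is immediate here since $u_1$ is empty. Beyond these points the argument is formal once \Cref{differentm} is available.
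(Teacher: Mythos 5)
Your proposal is correct and follows essentially the same route as the paper: the paper's proof simply reuses the construction from the proof of \Cref{inf-ord} (the pair $\gamma_1=(m/t,3,(c/t,(m-c)/t,0))$, $\gamma_2=(m,N+1,a')$), observes that $u_1(\co)$ is empty so hypothesis (C) holds automatically, and replaces \Cref{thm_muord} by \Cref{differentm}. Your additional bookkeeping — identifying $r=m_1$, $\epsilon=m-t$, and noting that the codimension equality output by \Cref{differentm} feeds back in as the input for the next iteration — is exactly the (implicit) content of the paper's argument.
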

		
		\begin{proof}
		Let $\gamma_1$ be as in the proof of \Cref{inf-ord}.  Note that $u_1(\co)$ is empty for all $\co$.
		So the pair $\gamma_1, \gamma$
		satisfies hypothesis (C), in addition to (A) and (B).
		The proof is then the same as for \Cref{inf-ord}, replacing \Cref{thm_muord} with \Cref{differentm}.
		\end{proof}

\begin{corollary}\label{inf-nu}
(Extension of Corollaries \ref{infinite-ord} and \ref{inf-muord})
Let $\epsilon = (n-1)(r-1)$ 
if there exist $1\leq i< j\leq N$ such that $a(i)+a(j)\equiv 0 \bmod m$, and  $\epsilon = (n-1)(m-1)$ otherwise.
Assume $Z^\circ(\gamma)[\nu]$ is non-empty and
	contains an irreducible component $\Gamma$ such that 
		$\codim (\Gamma, Z(\gamma))=\codim (\Sh[\nu], \Sh)$.
		Assume $u \ll_{\text{(C)}}  u$.
	Then for any $n\in \ZZ_{\ge 1}$, there exists a smooth curve with Newton polygon $u^{n-1}\oplus \nu \oplus \ord^\epsilon$.
	\end{corollary}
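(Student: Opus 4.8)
The plan is to run the $\mu$-ordinary induction underlying \Cref{infinite-ord} up to level $n-1$ and then invoke \Cref{differentm} exactly once, at the final clutching, feeding in $\nu$ as the non $\mu$-ordinary input $\nu_2$. For $n=1$ the assertion is just the hypothesis that $Z^\circ(\gamma)[\nu]$ is non-empty (with $\epsilon=0$), so from now on I assume $n\geq 2$.

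First I would dispose of two preliminaries. Note that $Z^\circ(\gamma)[u]$ is non-empty: since $Z^\circ(\gamma)[\nu]$ contains an irreducible component of codimension $\codim(\Sh[\nu],\Sh)=\length(\nu)$, the purity theorem \cite[Theorem 4.1]{JO} forces the generic Newton polygon on $Z(\gamma)$ to equal $u$, so $Z^\circ(\gamma)[u]$ is open dense in $Z(\gamma)$. Next, we may assume there exist $1\leq i<j\leq N$ with $a(i)+a(j)\equiv 0\bmod m$: otherwise replace $\gamma$ by the generalized monodromy datum $(m,N+2,a')$ with $a'(k)=a(k)$ for $k\leq N$ and $a'(N+1)=a'(N+2)=0$; by \Cref{plus1} (applied twice) this has the same $\overline{Z}$, hence the same $Z^\circ[\nu]$ and the same irreducible component $\Gamma$, and by \eqref{DMeqn} the same signature, hence the same Shimura variety, $\mu$-ordinary polygon $u$, and value of $\codim(\Sh[\nu],\Sh)$, while now $a'(N+1)+a'(N+2)\equiv 0\bmod m$ with $\gcd(a'(N+1),m)=m$. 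In the genuine case fix such a pair, reorder the branch points so that $i=1,j=N$, and set $r=\gcd(a(1),m)=\gcd(a(N),m)$; thus $r=m$ in the reduced case, matching the defect $\epsilon=(n-1)(m-1)$ in the statement.

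Now, following the proof of \Cref{infinite-ord} applied to the pair $\gamma,\gamma$ (admissible after the reordering, balanced by \Cref{rmk_hp2}), I would form the tower $Z^{\times 1}=Z(\gamma),Z^{\times 2},\dots,Z^{\times(n-1)}$, where the monodromy datum of $Z^{\times k}$ is obtained from \Cref{Dmd3} for the ordered pair $(\text{datum of }Z^{\times(k-1)},\ \text{datum of }\gamma)$. By \Cref{balanced} the $\mu$-ordinary polygon of $Z^{\times(n-1)}$ is $u_1:=u^{n-1}\oplus\ord^{(n-2)(r-1)}$, and iterating \Cref{thm_muord} produces a smooth curve in $(Z^{\times(n-1)})^\circ[u_1]$. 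Finally I would apply \Cref{differentm} to the ordered pair $\gamma_1=(\text{datum of }Z^{\times(n-1)})$, $\gamma_2=\gamma$ — so $m_1=m_2=m$, $d=1$, $\gamma_3$ is the datum of $Z^{\times n}$, and, the last inertia entry of $Z^{\times(n-1)}$ being $a(N)$, we get $\epsilon=d(r-1)=r-1$ — with $\nu_2=\nu\in B_2$. Hypotheses (A) and (B) hold for this pair by the construction in \Cref{infinite-ord}; hypothesis (C), namely $u_1\ll_{\text{(C)}} u$, follows from the assumption $u\ll_{\text{(C)}} u$ by \Cref{trivialC} and \Cref{HP3induce}. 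The remaining input of \Cref{differentm} is met ($Z_1^\circ[u_1]$ non-empty by the previous step, and $Z_2^\circ[\nu]$ non-empty and containing $\Gamma$ with the required codimension by hypothesis), so its conclusion yields a smooth curve in $(Z^{\times n})^\circ[u_1\oplus\nu\oplus\ord^{r-1}]=(Z^{\times n})^\circ[u^{n-1}\oplus\nu\oplus\ord^{(n-1)(r-1)}]$, which is exactly the claimed Newton polygon.

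The main obstacle is controlling hypothesis (C), which does not behave well under iterated clutching (as remarked before \Cref{HP3induce}). The argument circumvents this by invoking \Cref{differentm} only once, at the last clutching, the tower $Z^{\times 1},\dots,Z^{\times(n-1)}$ being built entirely with the weaker \Cref{thm_muord}; the hypothesis $u\ll_{\text{(C)}} u$ is then precisely what \Cref{HP3induce} needs to push (C) through that single step. A minor additional point is the bookkeeping of the toric rank (the defect $\ord^\epsilon$) when $r=m$, where the curves carry a marked point and the forgetful map $\tilde Z^\circ\to Z^\circ$ has one-dimensional fibers; but this is exactly the third case in the proof of \Cref{differentm} and needs no new input.
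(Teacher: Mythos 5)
Your proposal is correct and follows essentially the same route as the paper: build the tower $Z^{\times 1},\dots,Z^{\times(n-1)}$ via \Cref{infinite-ord}/\Cref{inf-muord} (with $t=m$ in the case where no pair $a(i)+a(j)\equiv 0 \bmod m$ exists), verify hypotheses (A), (B) by construction and (C) via \Cref{HP3induce}(1) from $u\ll_{\text{(C)}}u$, and apply \Cref{differentm} once at the final clutching with $\nu_2=\nu$. Your explicit purity argument showing that $Z^\circ(\gamma)[u]$ is non-empty (needed as input to the tower construction) is a point the paper leaves implicit, but it is a correct elaboration rather than a different approach.
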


\begin{proof}
The result is true when $n=1$ by hypothesis.
For $n \geq 2$, we use \Cref{infinite-ord} (resp.\  \Cref{inf-muord} with $t=m$) to construct a family $Z^{\times n-1}$ with Newton polygon $u^{n-1} \oplus \ord^{(n-2)(r-1)}$ (resp.\ $u^{n-1} \oplus \ord^{(n-2)(m-1)}$). 
	The pair of monodromy data of the families $Z^{\times n-1}$ and $Z$ satisfies hypotheses (A) and (B).
	Since $u \ll_{\text{(C)}}  u$, by \Cref{HP3induce} (1), the pair also satisfies hypothesis (C). 
	Hence we conclude by \Cref{differentm}.
\end{proof}

\begin{corollary} \label{twofamilynext}
	With notation and hypotheses as in \Cref{twofamilyinfinite}, 
	assume furthermore that for some $\nu_2 \in B(\gamma_2)$,
$Z^\circ_2[\nu_2]$ is non-empty
and contains an irreducible component $\Gamma$ such that 
$\codim(\Gamma,Z_2)=\codim(\Sh_2[\nu_2],\Sh_2)$.
Also assume that $u_1\ll_{\text{(C)}} u_2$ and $u_2 \ll_{\text{(C)}}  u_2$.
Then there exists a smooth curve with Newton polygon 
$u_1^{n_1} \oplus  u_2^{n_2-1} \oplus  \nu_2 \oplus \ord^{(n_1+n_2-2)(m-1)+ (r-1)}$. 
\end{corollary}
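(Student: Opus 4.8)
The plan is to follow the scheme of the proof of \Cref{twofamilyinfinite}, but to attach the non $\mu$-ordinary Newton polygon $\nu_2$ only in a single, final clutching step. The structural reason for this is that in \Cref{differentm} hypothesis~(C) constrains the $\mu$-ordinary polygon of the \emph{second} factor; so $\nu_2$ must be introduced last, with the ``large'' ($\mu$-ordinary) family occupying the first slot, where \Cref{HP3induce} applies.

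\emph{Step 1: build the first factor $\mathcal{Z}$.} First I would apply \Cref{inf-muord} with $t=m$ to $Z_1$, producing a family $Z_1^{\times n_1}$ of smooth curves whose generic (hence $\mu$-ordinary) Newton polygon is $u_1^{n_1}\oplus\ord^{(n_1-1)(m-1)}$; when $n_2\geq 2$ I would likewise produce $Z_2^{\times(n_2-1)}$ with $\mu$-ordinary polygon $u_2^{n_2-1}\oplus\ord^{(n_2-2)(m-1)}$ and clutch $Z_1^{\times n_1}$ with $Z_2^{\times(n_2-1)}$ along a pair of branch points of inertia $0$ via \Cref{thm_muord}. Call the resulting family $\mathcal{Z}$ (for $n_2=1$, set $\mathcal{Z}=Z_1^{\times n_1}$). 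As in the proof of \Cref{twofamilyinfinite}, the monodromy datum of $\mathcal{Z}$ is built from $\gamma_1,\gamma_2$, it still carries a branch point of inertia $a_1(N_1)$ (this point is never used in Step~1), and the ordered pair $(\mathcal{Z},\gamma_2)$ satisfies hypothesis~(A) (because $a_1(N_1)$ is opposite to $a_2(1)$ modulo $m$) and hypothesis~(B) (preserved along the construction, cf.\ \Cref{rmk_hp2}(4) and the signature computation there). By \Cref{balanced} the $\mu$-ordinary polygon of $\mathcal{Z}$ is, uniformly in $n_2\geq1$,
\[
u_{\mathcal{Z}} \;=\; u_1^{n_1}\oplus u_2^{n_2-1}\oplus\ord^{(n_1+n_2-2)(m-1)},
\]
since a Step~1 clutch at a $0$-point contributes defect $m-1$. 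Moreover $\mathcal{Z}^\circ[u_{\mathcal{Z}}]$ is non-empty by construction, hence dense in $\mathcal{Z}$ by \Cref{muordsmooth}, so it trivially satisfies the codimension condition \eqref{Ecodimcond}.

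\emph{Step 2: the final clutch.} Next I would apply \Cref{differentm} to the ordered pair $(\mathcal{Z},\gamma_2)$ with the first factor taken at $u_{\mathcal{Z}}$ and the second at $\nu_2$, clutching along the retained branch point of $\mathcal{Z}$ of inertia $a_1(N_1)$ and the branch point of $Z_2$ of inertia $a_2(1)$. Here $m_1=m_2=m$, so $d=1$ and $\epsilon=r-1$. Hypotheses~(A) and (B) hold by Step~1. Hypothesis~(C) is the assertion $u_{\mathcal{Z}}\ll_{\text{(C)}}u_2$, which, given that $u_{\mathcal{Z}}$ has the shape $u_1^{n_1}\oplus u_2^{n_2-1}\oplus\ord^{\ast}$, follows from the assumptions $u_1\ll_{\text{(C)}}u_2$ and $u_2\ll_{\text{(C)}}u_2$ via \Cref{HP3induce}(2) (or \Cref{HP3induce}(1), resp.\ directly, when $n_2=1$). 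The codimension hypothesis of \Cref{differentm} on the second factor is exactly the assumed condition on $Z_2^\circ[\nu_2]$. Hence \Cref{differentm} yields an irreducible component of $Z_3^\circ[\,u_{\mathcal{Z}}\oplus\nu_2\oplus\ord^{r-1}\,]$ of the expected codimension; in particular this stratum is non-empty, and
\[
u_{\mathcal{Z}}\oplus\nu_2\oplus\ord^{r-1}
= u_1^{n_1}\oplus u_2^{n_2-1}\oplus\nu_2\oplus\ord^{(n_1+n_2-2)(m-1)+(r-1)},
\]
which is the asserted Newton polygon, realised by a smooth curve over $\overline{\FF}_p$.

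\emph{Main obstacle.} The delicate point is essentially bookkeeping: one must verify that the admissible and balanced conditions genuinely persist through the iterated construction of $\mathcal{Z}$, and that the defects add up to exactly $(n_1+n_2-2)(m-1)+(r-1)$ — which forces the Step~1 clutches to be carried out at inertia-$0$ points (so that an inertia-$a_1(N_1)$ point survives into $\mathcal{Z}$) and the Step~2 clutch to be carried out at that inertia-$a_1(N_1)$ point. The conceptual heart is the verification of hypothesis~(C) for $(\mathcal{Z},\gamma_2)$, which is where \Cref{HP3induce} — and hence both compatibility hypotheses $u_1\ll_{\text{(C)}}u_2$ and $u_2\ll_{\text{(C)}}u_2$ — enters, and which is precisely the reason $\nu_2$ cannot be spread through the induction and must be attached in the last step with the $\mu$-ordinary family in the first slot.
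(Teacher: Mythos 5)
Your proposal is correct and follows essentially the same strategy as the paper's proof: build up a large family with $\mu$-ordinary polygon $u_1^{n_1}\oplus u_2^{n_2-1}\oplus\ord^{\ast}$ via the earlier clutching corollaries, verify hypothesis (C) for the resulting pair using \Cref{HP3induce}, and attach $\nu_2$ in a single final application of \Cref{differentm}. The only difference is bookkeeping: the paper builds the intermediate family via \Cref{twofamilyinfinite} (so the $(r-1)$ defect is incurred inside it) and performs the last clutch at a marked unramified fiber via \Cref{Rmodifyslightly} with toric rank $m-1$, whereas you incur $m-1$ at each intermediate inertia-$0$ clutch and save the $(a_1(N_1),a_2(1))$ pair, contributing $r-1$, for the final step — the totals agree.
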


\begin{proof}
If $n_2=1$, we first apply \Cref{inf-muord} with $t=m$ to produce a family 
$Z_3$ with Newton polygon $u_1^{n_1} \oplus \ord^{(n_1-1)(m-1)}$.
Note that $Z_3$ and $Z_2$ satisfy hypotheses (A) and (B).
Since $u_1\ll_{\text{(C)}} u_2$, by \Cref{HP3induce} (1), $Z_3$ and $Z_2$ also satisfy hypothesis (C).
Applying \Cref{differentm} produces a smooth curve with Newton polygon 
$u_1^{n_1} \oplus \nu_2 \oplus \ord^{(n_1-1)(m-1) +(r-1)}$.

For $n_2 \geq 2$, we apply \Cref{twofamilyinfinite} to produce a family
$Z_4$ with Newton polygon $u_1^{n_1} \oplus u_2^{n_2-1} \oplus \ord^{(n_1+n_2-3)(m-1)}$.
Since $u_1\ll_{\text{(C)}} u_2$ and $u_2 \ll_{\text{(C)}}  u_2$, by \Cref{HP3induce} (2),
$Z_4$ and $Z_2$ satisfy hypotheses (A), (B), and (C).
Applying \Cref{Rmodifyslightly}, we produce generalized monodromy data for $Z_4$ and $Z_2$ by marking an additional 
unramified fiber.  In this situation, the toric rank is $\epsilon'=m-1$.
Applying \Cref{differentm} to the generalized families for $Z_4$ and $Z_2$ completes the proof.
\end{proof}

\Cref{twofamilynext} applies to the pair of monodromy data in the proof of \Cref{table2}.

\subsection{An exceptional example} \label{Sexceptional}

We give an example of a 
pair of monodromy data, and non $\mu$-ordinary Newton polygon $\nu_2$,  satisfying hypotheses (A) and (B), but not (C), for which \eqref{Ecodimequal} can be verified directly.
Furthermore, as the Kottwitz set $B_2$ has size 2,  this example also shows that hypothesis (C) is sufficient but not necessary for \Cref{codimension1} to hold.

Recall that $\sss$ is the Newton polygon $(1/2,1/2)$.

\begin{proposition}\label{exception}
	If $p \equiv 7 \bmod 8$ is sufficiently large, then there exists a smooth curve over $\overline{\mathbb{F}}_p$ of genus $9$ with Newton polygon $\sss^7\oplus \ord^2$. 
\end{proposition}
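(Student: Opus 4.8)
The plan is to deduce \Cref{exception} from \Cref{differentm}, in the variant form of \Cref{rmkcodim} in which hypothesis (C) is replaced by a direct verification of \eqref{Ecodimequal}. I would apply it to the ordered pair of monodromy data
\[
\gamma_1=(4,3,(1,1,2)),\qquad \gamma_2=(8,4,(4,1,1,2)).
\]
Here $m_1=4\mid m_2=8$, so in the notation of \Cref{Dep1} we have $d=m_2/m_1=2$, $r=\gcd(m_1,a_1(N_1))=\gcd(4,2)=2$, $\epsilon=d(r-1)=2$, and \Cref{Dmd3} produces $\gamma_3=(8,5,(2,2,1,1,2))$, a genuine monodromy datum of genus $g_3=dg_1+g_2+\epsilon=2\cdot1+5+2=9$. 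The datum $\gamma_1$ represents a $\mu_4$-cover of $\PP$ whose Jacobian is the elliptic curve with complex multiplication by $\QQ(\zeta_4)$; for $p\equiv 3\bmod 4$ this $p$ is inert in $\QQ(\zeta_4)$, the curve is supersingular, $u_1=\sss$, and $Z_1^\circ[u_1]=Z_1$ is non-empty by \Cref{muordoccur}(1). The datum $\gamma_2$ is equivalent to the special family of \cite[Table~1]{moonen} with $m=8$ and genus $5$ (one checks $\dim Z_2=\dim\Sh_2=1$; see also \Cref{TABLE}), and reordering its branch points is harmless by \Cref{plus1}. With $\nu_2=\sss^5\in B_2$, the expected output is a smooth curve of genus $9$ with Newton polygon $u_1^{d}\oplus\nu_2\oplus\ord^{\epsilon}=\sss^2\oplus\sss^5\oplus\ord^2=\sss^7\oplus\ord^2$.

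Next I would fix $p\equiv 7\bmod 8$ and record the combinatorial data. Hypothesis (A) is the single congruence $d_1a_1(N_1)+d_2a_2(1)=2\cdot2+1\cdot4\equiv 0\bmod 8$. Using \eqref{DMeqn}, the induced signatures, as functions on $\ZZ/8\ZZ\setminus\{0\}$, are $\cf_1^{\dagger}=(1,0,0,0,1,0,0)$, $\cf_2=(2,1,1,0,1,0,0)$, and $\cf_3=\cf_1^{\dagger}+\cf_2+\delta=(3,2,1,0,2,1,0)$. Since $p\equiv -1\bmod 8$, the Frobenius orbits in $\ZZ/8\ZZ\setminus\{0\}$ are $\{1,7\},\{3,5\},\{2,6\},\{4\}$, and inspecting $\cf_1^{\dagger},\cf_2$ on each orbit verifies hypothesis (B). Finally, by \cite{moonen} together with \Cref{MSS}, the stratum $Z_2^\circ[\nu_2]$ is non-empty provided $p$ is sufficiently large (this is a supersingular Newton polygon, which is the source of the ``$p$ large'' hypothesis), and by \Cref{rmk_goodinput} every one of its irreducible components satisfies the codimension condition \eqref{Ecodimcond}.

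Hypothesis (C) fails for this pair: on the orbit $\{3,5\}$ the component $u_1^{\dagger}$ has slope $1/2$ while $u_2$ is ordinary there, so $u_1\not\ll_{\text{(C)}}u_2$. The crux of the argument is thus to verify \eqref{Ecodimequal} for $\nu_2=\sss^5$ by hand, which is feasible because $B_2$ has size $2$. From \Cref{Sformulau} and the local PEL structure at $p$ — the orbits $\{1,7\}$ and $\{2,6\}$ give rigidly supersingular pieces (signatures $(2,0)$ and $(1,0)$, local Kottwitz set of size $1$), while $\{3,5\}$ gives a $\mathrm{U}(1,1)$-piece (local Kottwitz set of size $2$) — one finds $B_2=\{u_2,\sss^5\}$ with $u_2=\sss^3\oplus\ord^2$, so $\codim(\Sh_2[\nu_2],\Sh_2)=1$. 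On the $\gamma_3$-side, the three nontrivial orbits have $g_3(\co)=3$ with signatures $(3,0)$, $(1,2)$, $(2,1)$; the first contributes a rigid supersingular piece, and the other two contribute $\mathrm{U}(1,2)$-, resp.\ $\mathrm{U}(2,1)$-pieces, each with local Kottwitz set of size $2$. Computing orbit by orbit, $u_1^{d}\oplus\nu_2\oplus\ord^{\epsilon}$ and the $\mu$-ordinary $u_3=\sss^5\oplus\ord^4$ agree in the $\{1,7\}$- and $\{2,6\}$-components and differ only in the $\{3,5\}$-component (basic versus $\mu$-ordinary), so the interval between them in $B_3$ has exactly two elements; hence $\codim(\overline{\Sh}_3[u_1^{d}\oplus\nu_2\oplus\ord^{\epsilon}],\overline{\Sh}_3)=1$, matching the left-hand side of \eqref{Ecodimequal}. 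This orbit-wise analysis of $B_3$ — in particular the input that the ``bad'' orbit $\{3,5\}$ still contributes only two Newton polygons lying above $u_1^{d}\oplus\nu_2\oplus\ord^{\epsilon}$ — is the step I expect to be the main obstacle. Once \eqref{Ecodimequal} is established, \Cref{differentm} (in the form of \Cref{rmkcodim}) shows that $Z_3^\circ[u_1^{d}\oplus\nu_2\oplus\ord^{\epsilon}]$ is non-empty, exhibiting a smooth $\mu_8$-cover of $\PP$ with monodromy datum $\gamma_3$, genus $9$, and Newton polygon $\sss^7\oplus\ord^2$, which completes the proof.
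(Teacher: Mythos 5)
Your proposal is correct and follows essentially the same route as the paper's proof: you clutch $Z(4,3,(1,1,2))$ with Moonen's family $M[15]$ (your $\gamma_2=(8,4,(4,1,1,2))$ is the representative of $M[15]=(8,4,(4,2,5,5))$ obtained by multiplying by $5\in(\ZZ/8\ZZ)^*$, which also relabels the "bad" orbit from $\{1,7\}$ to $\{3,5\}$), verify (A) and (B), observe (C) fails, check \eqref{Ecodimequal} directly via the size-two Kottwitz sets, and invoke \Cref{differentm} through \Cref{rmkcodim}. Your orbit-by-orbit computation of $B_3$ is a correct, slightly more explicit version of the paper's appeal to \cite[Section 4.3]{LMPT2}.
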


\begin{proof}
		Let $Z_2=Z(8, 4, (4,2,5,5))$. Then $Z_2$ is the special family $M[15]$ in \cite[Table 1]{moonen}, and the associated Shimura variety $\Sh_2$ has signature type $\cf_2=(1,1,0,0,2,0,1)$.  
	At any prime $p\equiv 7 \bmod 8$, the $\mu$-ordinary Newton polygon is 
	$u_2=\ord^2 \oplus \sss^3$ and the basic Newton polygon is $\nu_2=\sss^5$ \cite[Section 6.2]{LMPT2}.  
	
	Let $Z_1=Z(4,3,(1,1,2))$, which has signature $(1,0,0)$.
	At any prime $p\equiv 7 \bmod 8$, the $\mu$-ordinary Newton polygon is $u_1=\sss$  
	\cite[Section 4, $m=4$]{LMPT}.
	Then $d=2$ and $r=2$.  By \Cref{Sindprec}, the induced signature type is  
	$\cf_1^\dagger=(1,0,0,0,1,0,0)$.

	The pair of monodromy data for $Z_1$ and $Z_2$ satisfies hypothesis (A).
	Let $p \equiv 7 \bmod 8$; then it also satisfies hypothesis (B).
	For the orbit $\co=\{1,7\} $, by \cite[Example~4.5]{LMPT2}, 
	$u_1(\co) $ has slopes $1/2$ and
	$u_2(\co) $ has slopes $0$ and $1$.
	Thus the pair does not satisfy hypothesis (C). 

	The image of $\tilde{Z}_1\times \tilde{Z}_2$ under the clutching morphism lies in the family $\tilde{Z}_3$ of curves with monodromy datum $(8,5,(2,2,2,5,5))$.
The Shimura variety $\Sh_3$ has signature type $\cf_3=(2,2,0,0,3,1,1)$ and
	its $\mu$-ordinary Newton polygon is $u_3=u_1^2 \oplus u_2 \oplus \ord^2 = \sss^5\oplus \ord^4$ 
	by \Cref{balanced}. By \cite[Section 4.3]{LMPT2}, there is only one element $u_3$ in $B(\Sh_3)$ which is strictly larger than $\sss^7\oplus \ord^2=u_1^2\oplus \nu\oplus \ord^2$. 
	From \eqref{eqn_codim}, we see that the codimension of $\Sh_3[u_1\oplus \nu_2 \oplus \ord^2]$ in $\Sh_3$ is $1$.  Thus, we conclude by \Cref{rmkcodim} and  \Cref{differentm}.
\end{proof}


\section{Supersingular cases in Moonen's table}\label{MoonenSS}

In \cite[Theorem 3.6]{moonen}, Moonen proved there are exactly $20$ positive-dimensional special families arising from cyclic covers of $\PP$. In \cite[Section 6]{LMPT2}, we computed all of the Newton polygons $\nu$ that occur on the corresponding Shimura varieties using the Kottwitz method, see \Cref{TABLE}. Moreover, in \cite[Theorem 1.1]{LMPT2}, we proved that the open Torelli locus intersects each non-supersingular (resp.\ supersingular) Newton polygon stratum
(resp.\ as long as the family has dimension $1$ and $p$ is sufficiently large).

In this section, we extend \cite[Theorem 1.1]{LMPT2} to include the supersingular Newton polygon strata in the five remaining 
cases when the dimension of the family is greater than $1$, using results from \Cref{sec_nonord}. 
Case (5) is note-worthy since it was not previously known that there exists a smooth supersingular curve 
of genus $6$ when $p \equiv 2,3,4 \bmod 5$, see \cite[Theorem 1.1]{LMPT} and \cite[Theorem 1.1]{LMPT2} for related results. 

\begin{theorem}\label{LastCases}
	There exists a smooth supersingular curve of genus $g$ defined over $\overline{\FF}_p$ for all sufficiently large primes 
	satisfying the given congruence condition in the following families:
	\begin{enumerate}
		\item $g=3$, when $p \equiv 2 \bmod 3$, in the family $M[6]$;
		\item $g=3$, when $p \equiv 3 \bmod 4$, in the family $M[8]$;
		\item $g=4$, when $p \equiv 2 \bmod 3$, in the family $M[10]$;
		\item $g=4$, when $p \equiv 5 \bmod 6$, in the family $M[14]$; and
		\item $g=6$, when $p \equiv 2,3,4 \bmod 5$, in the family $M[16]$.
	\end{enumerate}
\end{theorem}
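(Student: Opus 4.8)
The strategy is to realize each of the five supersingular Newton polygons as the Newton polygon of a curve in the image of a clutching morphism, and then apply \Cref{differentm} to deform to a smooth curve \emph{within} the family while controlling the codimension of the stratum. For each case $(g, p \bmod m)$ we must produce an ordered pair of monodromy data $\gamma_1, \gamma_2$ satisfying hypotheses (A), (B), and (C), together with a Newton polygon $\nu_2 \in B_2$ and an irreducible component $\Gamma_2 \subset Z_2^\circ[\nu_2]$ of the correct codimension, such that $u_1^d \oplus \nu_2 \oplus \ord^\epsilon$ is supersingular of the desired genus. The natural input for $\gamma_2$ is one of Moonen's twenty special families (or a small-$N$ family), since for those \Cref{rmk_goodinput} guarantees the codimension condition \eqref{Ecodimcond} holds automatically for every component of $Z_2^\circ[\nu_2]$, and by \cite[Theorem 1.1]{LMPT2} the supersingular stratum of a $1$-dimensional special family is known to be non-empty for $p$ large. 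So the most economical route is: take $\gamma_2$ to be a $1$-dimensional special family $M[j']$ whose supersingular (basic) stratum is already known to be non-empty (hence a point, of codimension equal to $\dim Z_2$), and take $\gamma_1$ to be a suitable base case from \Cref{muordoccur} — most simply a three-branch-point family ($N_1 = 3$) for which $u_1$ is supersingular, or even an unramified-fiber family with $u_1$ empty as in \Cref{inf-ord}.

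First I would fix, case by case, the target: a supersingular curve of genus $g$ has Newton polygon $\sss^g$, so I need $u_1^d \oplus \nu_2 \oplus \ord^\epsilon = \sss^g$, which forces $\epsilon = 0$ (i.e. $r = 1$, so $a_1(N_1)$ is coprime to $m_1$) and forces both $u_1$ and $\nu_2$ to be supersingular. Thus for each case I search Moonen's Table 1 (reproduced in \Cref{TABLE}) for a $1$-dimensional special family $M[j']$ whose basic Newton polygon at the relevant congruence class is $\sss^{g_2}$ — these are exactly the "supersingular cases" flagged in \cite[Theorem 1.1]{LMPT2} — and then I choose $\gamma_1$ with $m_1 \mid m_2$, $N_1 = 3$ (or $N_1 = 4$), $d = m_2/m_1$, and inertia type arranged so that hypothesis (A) holds with $r = 1$ and so that the $\mu$-ordinary polygon $u_1$ at $p$ is $\sss^{g_1}$ with $d g_1 + g_2 = g$. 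Hypothesis (B) is then checked via \Cref{balanced}: since $u_1$ is supersingular, by \Cref{trivialC}(1) each $u_1(\co)$ has at most one slope $1/2$, and hypothesis (B) on the induced signatures $(\cf_1^\dagger, \cf_2)$ reduces to a short orbit-by-orbit inequality; because we will in fact engineer $\cf_1^\dagger$ to take only the values $0$ and $g_1(\co)$ (possible exactly because $u_1$ is supersingular, by \Cref{ullu}), hypothesis (B) holds essentially trivially. Hypothesis (C), $u_1 \ll_{\text{(C)}} u_2$, similarly follows from \Cref{trivialC}(1): $\nu_2 = \sss^{g_2}$ is supersingular, so every slope of $u_1(\co)$ lies in $[0,1] \setminus (\tfrac12, \tfrac12)$, which is all of $[0,1]$. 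Having verified (A), (B), (C), I invoke \Cref{differentm} with $\nu_2$ the basic polygon and $\Gamma_2 = Z_2^\circ[\nu_2]$ (a point, with $\codim(\Gamma_2, Z_2) = \dim Z_2 = \codim(\Sh_2[\nu_2], \Sh_2)$ by \eqref{eqn_codim} since $\nu_2$ is the minimal element of $B_2$), obtaining a non-empty $Z_3^\circ[\sss^g]$; the curves it parametrizes are smooth supersingular curves of genus $g$, proving the theorem for that case.

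The main obstacle — and the part requiring genuine case-by-case work rather than formal manipulation — is the combinatorial search for the right $\gamma_1$ in each of the five cases: one must find $m_1 \mid m_2$, a valid inertia type $a_1$ with $\gcd(a_1(N_1), m_1) = 1$, and a congruence-compatible choice ensuring $u_1$ is genuinely supersingular (not merely $p$-rank zero) at the prescribed residue class, and with the genus bookkeeping $d g_1 + g_2 = g$ working out. For the small cases this is unproblematic — e.g. for (2), $g = 3$, $p \equiv 3 \bmod 4$: take $\gamma_2 = (4,4,(1,1,1,1))$ or the relevant $M[j']$ with $g_2 = 2$ and basic polygon $\sss^2$, and $\gamma_1 = (4,3,(1,1,2))$ for which $u_1 = \sss$ at $p \equiv 3 \bmod 4$ by \cite[Section 4]{LMPT}, giving $d = 1$, $r = 1$, $\epsilon = 0$, $g_3 = 1 + 2 = 3$; analogously for (5) with $m = 5$, $p \equiv 2,3,4 \bmod 5$, pairing a three-point $\mu_5$-family of genus $2$ (for which $u_1 = \sss^2$, since $f = \mathrm{ord}_5(p) = 4$ forces supersingular) with a special $\mu_5$ or $\mu_{10}$ family $M[16]$-adjacent one of genus $4$ whose basic polygon is $\sss^4$. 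I would also need to double-check in cases (1), (3), (4) (which involve $m = 3, 6$) that the required $1$-dimensional special sub-family with supersingular basic locus exists in \cite[Table 1]{moonen} — if not, I would instead build $\gamma_2$ itself by a preliminary clutching (as in \Cref{inf-nu}) from a genus-$1$ or genus-$2$ base case, at the cost of one more application of \Cref{differentm}. Finally, I would remark, as in the \Cref{balanced}/\Cref{codim} framework, that "$p$ sufficiently large" is needed only to guarantee that the base-case supersingular strata $Z_i^\circ[\sss^{g_i}]$ are non-empty (via \cite[Theorem 1.1]{LMPT2} and \Cref{muordoccur}), and that the clutching step itself introduces no further constraint on $p$ beyond $p \nmid m_3$.
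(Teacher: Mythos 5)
Your overall strategy for cases (1), (2), (4), (5) --- degenerate to the boundary via an admissible pair with $u_1$ supersingular and $\gamma_2$ a one-dimensional special family whose basic stratum is non-empty, then apply \Cref{differentm} --- is the same as the paper's. But there are two genuine problems.

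First, your verification of hypothesis (C) is wrong. Definition \ref{def_hp3} compares the slopes of $u_1$ with the \emph{$\mu$-ordinary} polygon $u_2$ of the second family, not with the basic polygon $\nu_2$; \Cref{trivialC}(1) requires $u_2(\co)$ to be supersingular, and you have substituted $\nu_2$ for $u_2$. In these degenerations $u_2$ is typically ordinary on the relevant orbit (e.g.\ for $M[8]$ at $p\equiv 3\bmod 4$ one degenerates onto $\gamma_1=(4,3,(1,2,1))$ with $u_1=\sss$ and $\gamma_2$ equivalent to $M[4]$ with $u_2(\co)=\ord^2$ on the orbit $\co=\{1,3\}$), so the excluded interval $(\lambda_{1st}(u_2(\co)),\lambda_{last}(u_2(\co)))=(0,1)$ contains the slope $1/2$ of $u_1(\co)$ and hypothesis (C) \emph{fails}. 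The paper gets around this exactly as flagged in \Cref{rmkcodim}: one replaces (C) by a direct verification of \eqref{Ecodimequal} for the specific $\nu_2$, which here is immediate because $B_2$ and $B_3$ each contain exactly two Newton polygons, so both basic loci have codimension $1$. Your argument as written invokes a hypothesis that is false in every one of the four cases; it is repairable, but the repair is the content of \Cref{rmkcodim} and the two-element Kottwitz set observation, neither of which appears in your proposal.

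Second, case (3) cannot be reached by this method at all, and your fallback (building $\gamma_2$ by a preliminary clutching) does not address the obstruction. For $M[10]$ one has $\dim Z=\dim\Sh=3$ and the supersingular stratum has codimension $2$ in $\Sh$, while the only admissible degeneration has $\gamma_2=M[6]$ with $\codim(\Sh_2[\nu_2],\Sh_2)=1$. Thus \eqref{Ecodimequal} fails, \Cref{differentm} (even in the weakened form of \Cref{rmkcodim}) does not apply, and indeed the boundary only produces $1$-dimensional supersingular families whereas one needs to rule out that \emph{all} components of the $1$-dimensional supersingular locus lie in the boundary. The paper's argument here is of a different nature: it compares the number $s_{M[10]}$ of irreducible components of the supersingular locus, which grows with $p$ by \cite[Theorem 8.1]{LMPT2}, against the number of components arising from clutching, which is bounded independently of $p$ because $s_{M[6]}$ is constant in $p$ by the results of Li--Zhu cited in the proof. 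For $p$ large some component must therefore meet $Z^\circ$. Without this counting argument your proof covers at most four of the five cases.
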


\begin{corollary} \label{MSS}
	Let $\gamma = (m,N,a)$ 
	denote the monodromy datum for one of Moonen's special families from \cite[Table 1]{moonen}.  
	Assume $p \nmid m$.   
	Let $\nu \in \nu(B(\mu_m, \cf))$ be a Newton polygon occurring on $\Sh(\gamma)$ as in \Cref{Skottwitz}. 
	Then $\nu$ occurs as the Newton polygon of a smooth curve in the family $Z^\circ(\gamma)$, 
	as long as $p$ is sufficiently large when $\nu$ is supersingular. 
\end{corollary}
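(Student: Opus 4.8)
The plan is to deduce \Cref{MSS} by combining \cite[Theorem 1.1]{LMPT2} with \Cref{LastCases}, after a short case analysis over Moonen's twenty families.

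First I would recall what is already known. By \cite[Theorem 1.1]{LMPT2}, for every one of Moonen's special families $Z(\gamma)$ the open Torelli locus meets \emph{every non-supersingular} Newton polygon stratum of $\Sh(\gamma)$, with no largeness hypothesis on $p$; the same work also handles the supersingular stratum whenever $\dim Z(\gamma) = N-3 = 1$, provided $p$ is sufficiently large. Since $Z^\circ(\gamma)$ is open and dense in the irreducible $Z(\gamma)$ and the Newton polygon is lower semicontinuous, in each of these cases $\nu$ is in fact realized by a smooth curve in $Z^\circ(\gamma)$. Thus the only case not yet covered by \cite{LMPT2} is that $\nu = \sss^g$ is supersingular and $\dim Z(\gamma) = N-3 \geq 2$.

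Next I would use the explicit determination of the Kottwitz sets $B(\mu_m,\cf) = B(\Sh(\gamma))$ from \cite[Section 6]{LMPT2}, recorded in \Cref{TABLE}, to pin down exactly which of Moonen's special families of dimension $\geq 2$, and for which congruence classes of $p$ modulo $m$, admit a supersingular Newton polygon on $\Sh(\gamma)$. The claim to verify is that these are precisely the five families $M[6]$, $M[8]$, $M[10]$, $M[14]$, $M[16]$, with $p$ subject to the congruence conditions listed in \Cref{LastCases}. Granting this: for every other special family of dimension $\geq 2$, and for every other congruence class of $p$, the polygon $\nu\in\nu(B(\mu_m,\cf))$ under consideration is automatically non-supersingular, hence already handled by the previous paragraph; and for the five families above with the indicated congruences, \Cref{LastCases} directly produces a smooth supersingular curve of genus $g$ in $Z^\circ(\gamma)$ once $p$ is large. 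Assembling these observations covers all $\gamma$ and all $\nu$, which proves the corollary.

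The step I expect to be the real work is the second one: going through the Newton polygon tables for each of Moonen's families with $N \geq 5$ and checking, over all congruence classes of $p$ modulo $m$, that $\sss^g$ occurs only in the five cases of \Cref{LastCases}, so that no supersingular stratum is overlooked. Once this bookkeeping is in place, \Cref{MSS} follows formally, since the construction of smooth curves realizing the non-supersingular (resp.\ supersingular) strata is supplied by \cite[Theorem 1.1]{LMPT2} (resp.\ \Cref{LastCases}).
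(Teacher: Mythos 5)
Your proposal is correct and is exactly the argument the paper gives (the paper's proof simply cites \cite[Theorem 1.1]{LMPT2} together with \Cref{LastCases} and calls the conclusion immediate); your second step — verifying from \Cref{TABLE} that the supersingular strata on families of dimension $\geq 2$ occur only for $M[6]$, $M[8]$, $M[10]$, $M[14]$, $M[16]$ — is precisely the bookkeeping that the paper records in the opening paragraph of \Cref{MoonenSS} when it identifies those as "the five remaining cases."
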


\begin{proof}
	The proof is immediate from \cite[Theorem 1.1]{LMPT2} and Theorem \ref{LastCases}.
\end{proof}

\begin{proof}[Proof of Theorem \ref{LastCases} in cases (1), (2), (4), and (5)]
	Let $\gamma$ denote the monodromy datum, let $Z$ denote the special family of curves and 
	let $\Sh$ denote the corresponding Shimura variety and suppose that $p \not \equiv 1 \bmod m$.
	Then $\dim (Z)=\dim (\Sh)=2$, 
	and the basic locus $\Sh[\nu]$ is supersingular with codimension $1$ in $\Sh$.
	
	Following \cite[Section 5.2]{LMPT2}, a point of $\Sh[\nu]$ which is not in the image of 
	$Z^\circ$ is the Jacobian of a singular curve of compact type. 
	This point arises from an admissible clutching of points from two families $Z_1$ and $Z_2$. 
	This yields an admissible degeneration of the inertia type, see \cite[Definition 5.4]{LMPT2}.
	A complete list of admissible degenerations of the inertia type
	for Moonen's families can be found in \cite[Lemma~6.4]{LMPT2}.  
	In each of these cases, there exists an admissible degeneration such that $\dim (Z_1)=0$ and the $\mu$-ordinary Newton polygon 
	$u_1$ for $Z_1$ is supersingular, and $m_1=m_2$ (so $d=1$). 
	
	In the degenerations from \cite[Lemma~6.4]{LMPT2}, one checks using 
	\cite[Sections 6.1-6.2]{LMPT2} that 
	$Z_2$ is a special family with $\dim (Z_2)=1$ 
	and that $Z_2$ has exactly two Newton polygons, the $\mu$-ordinary one $u_2$ and the basic one $\nu_2$ which is supersingular.
	By \cite[Theorem 1.1]{LMPT2}, for $p$ sufficiently large, $Z_2^\circ[\nu_2]$ is non-empty.
	Since there are exactly two Newton polygons on $Z$, we conclude that these are $u=u_1 \oplus u_2$ and 
	$\nu=u_1 \oplus \nu_2$.
	By Proposition \ref{balanced}, the pair of monodromy data for $Z_1$ and $Z_2$ satisfies hypothesis (B). 
	The codimension condition in \eqref{Ecodimequal} is satisfied since
	the basic locus has codimension $1$ in both $Z$ and $Z_2$.
	By Remark \ref{rmkcodim} and Theorem \ref{differentm}, there exists a 
	1-dimensional family of smooth curves in $Z$ with the basic Newton polygon $\nu$, which is supersingular.
	\end{proof}

	\begin{proof}[Proof of Theorem \ref{LastCases} in case (3)]
	We use the same notation as in the first 2 paragraphs of the proof of the other cases.
	The only difference in case (3) is that $\dim (Z)=\dim (\Sh)=3$ 
	and the basic locus is supersingular with codimension $2$ in $\Sh$.
	In case (3), the only admissible degeneration comes from the pair of monodromy data 
	$\gamma_1=(3,3, (1,1,1))$ and $\gamma_2=(3,5, (2,1,1,1,1))$.
	The latter of these is the monodromy datum for the special family $M[6]$. 
	The basic locus $\Sh[\nu]$ has dimension $1$.
	The codimension condition in \eqref{Ecodimequal} is not satisfied in this situation:
		${\rm codim}(\Sh_2[\nu_2], \Sh_2) =1$, while ${\rm codim}(\Sh[\nu], \Sh) =2$.
	
	For $p$ sufficiently large, we claim that the number of irreducible components of $\Sh[\nu]$
	exceeds the number that arise from the boundary of $Z$. 
	Let $W$ be a $1$-dimensional family of supersingular singular curves in $Z\setminus Z^\circ$. 
	The only way to construct such a family $W$
	is to clutch a genus $1$ curve with $\mu_3$-action together with a $1$-dimensional family of supersingular curves 
	in $M[6]$. 
	In other words, $W$ arises as the image under $\kappa$ of $T_1 \times T_2$, 
	for some component $T_1$ of $\Sh (3,3, (1,1,1))$ and some component $T_2$ of the supersingular locus of $M[6]$. 
	The number of choices for $T_1$, for the $\mu_3$-actions, and for the labelings of the ramification points is 
	a fixed constant that does not depend on $p$.  
	
	Thus it suffices to compare the number $s_{M[10]}$ of irreducible components of the supersingular locus in $M[10]$
	with the number $s_{M[6]}$ 
	of irreducible components $T_2$ of the supersingular locus in $M[6]$ when $p \equiv 2 \bmod 3$.
	The signature type for $M[10]$ is $(1,3)$. 
	By \cite[Theorem 8.1]{LMPT2}, the number $s_{M[10]}$ grows with respect to $p$.
	
	The signature type for $M[6]$ is $(1,2)$.
	By \cite[Remark 8.2]{LMPT2}, we see that $s_{M[6]}$ is the same for all odd $p \equiv 2 \bmod 3$. 
	More precisely, note that $\dim (\Sh_2)=2\dim(\Sh_2(\nu_2))$ when $p\equiv 2 \bmod 3$, that the center of the associated reductive group is connected, and that the supersingular locus is the basic locus.  
	Thus by \cite[Remark 1.1.5 (2)]{LZ}, all odd $p\equiv 2 \bmod 3$ satisfy the hypothesis of \cite[Theorem~1.1.4 (1), Proposition~7.4.2]{LZ}, which provides an expression for $s_{M[6]}$ over $\overline{\FF}_p$ in terms of objects independent of $p$.

	Hence there exist irreducible components of $\Sh[\nu]$ which 
	contain the Jacobian of a smooth curve, for $p$ sufficiently  large.
\end{proof}


\section{Unlikely intersections}\label{unlikely}

In this section, we prove that the non-trivial intersection of the open Torelli
locus with the Newton polygon strata found in most of the results of the paper is unexpected.

Recall that $\sss$ denotes the Newton polygon $(1/2,1/2)$ . 

\begin{definition}\label{def_conditionU}
Let $\nu$ be a symmetric Newton polygon of height $2g$, 
and let $\mathcal{A}_g[\nu]$ be its Newton polygon stratum in the Siegel variety $\mathcal{A}_g$. 
	Then $\nu$ satisfies condition (U) if 
	${\rm dim}({\mathcal M}_g) < {\rm codim}({\mathcal A}_g[\nu], {\mathcal A}_g)$.
\end{definition}

\begin{definition}\label{def_unlikely}
	The open Torelli locus has an \emph{unlikely intersection} with 
	${\mathcal A}_g[\nu]$ in ${\mathcal A}_g$ if there exists a smooth curve of genus $g$ with Newton polygon $\nu$,
	and $\nu$ satisfies condition (U).
\end{definition}

\subsection{The codimension of Newton polygon strata in Siegel varieties}

We study the codimension of the Newton strata in ${\mathcal A}_g$. 
By \cite[Theorem 4.1]{oort01}, see also \eqref{eqn_codim}, 
\begin{equation}\label{codimNewton}
\codim (\mathcal{A}_g[\nu], \mathcal{A}_g) = \# \Omega(\nu),
\end{equation}
where $\Omega(\nu) := \{ (x,y) \in \mathbb{Z} \times \mathbb{Z} \mid 0 \le x,y \le g, \ (x,y) \text{ strictly below } \nu \}$.

\begin{remark}\label{QuadraticGrowth}
	By \eqref{codimNewton}, if $\nu$ is non-ordinary, then 
	${\rm codim}(\mathcal{A}_{ng}[\nu^n], \mathcal{A}_{ng})$ grows quadratically in $n$.
	In particular, if $\nu=\sss$, then $\codim (\mathcal{A}_{n}[\sss^n] , \mathcal{A}_n)= n(n+1)/2-\lfloor n^2/4 \rfloor >n^2/4$.
\end{remark}

\begin{proposition}\label{QuadraticGrowth2}
	Let $\{u_n\}_{n\in {\mathbb N}}$ be a sequence of symmetric Newton polygons.
	Let $2g_n$ be the height of $u_n$. 
	Suppose there exists $\lambda\in {\mathbb Q}\cap (0,1)$ such that
	the multiplicity of $\lambda$ as a slope of $u_n$ is at least $n$ for each $n\in {\mathbb N}$. 
	Then ${\rm codim}(\mathcal{A}_{g_n}[u_n], \mathcal{A}_{g_n})$ grows at least quadratically in $n$.
\end{proposition}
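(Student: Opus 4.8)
The plan is to use the combinatorial formula \eqref{codimNewton}, which equates $\codim(\mathcal{A}_{g_n}[u_n],\mathcal{A}_{g_n})$ with the number $\#\Omega(u_n)$ of lattice points $(x,y)$ with $0\le x,y\le g_n$ lying strictly below the polygon $u_n$, and to exhibit, beneath the slope-$\lambda$ part of $u_n$, a triangular array of such points whose size is $\gg n^2$.

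First I would normalize. Since $u_n$ is symmetric, the slope $1-\lambda$ occurs in $u_n$ with the same multiplicity as $\lambda$; replacing $\lambda$ by $\min(\lambda,1-\lambda)$, I may assume $\lambda\le 1/2$. Let the slope-$\lambda$ segment of $u_n$ run from an (integral) break point $(a,b)$ to $(a+m,b+m\lambda)$, so $m\ge n$ and $a,b\ge 0$ (the slopes of $u_n$ are non-negative and $u_n(0)=0$). Using that the slopes of $u_n$ are non-decreasing and that $u_n$ is symmetric about $(g_n,g_n/2)$, one checks that the part of this segment lying over the interval $[0,g_n]$ has horizontal length $\ell'\ge n/2$: if $\lambda<1/2$ the entire segment lies over $[0,\,g_n-m_{1/2}/2]\subseteq[0,g_n]$, where $m_{1/2}$ denotes the multiplicity of the slope $1/2$; if $\lambda=1/2$ the segment is itself the slope-$1/2$ segment and is centered at $x=g_n$, so exactly its left half, of horizontal length $m_{1/2}/2\ge n/2$, lies over $[0,g_n]$.

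Then I would count directly. For $0\le i\le \ell'$ the polygon $u_n$ passes through $(a+i,\,b+i\lambda)$, and since $u_n$ is convex with endpoints $(0,0)$ and $(2g_n,g_n)$ its value there is at most $g_n$; hence each pair $(a+i,y)$ with $0\le y<b+i\lambda$ belongs to $\Omega(u_n)$, and there are $\lceil b+i\lambda\rceil\ge i\lambda$ choices of $y$. Summing over $i=1,\dots,\ell'$,
\[
\#\Omega(u_n)\ \ge\ \sum_{i=1}^{\ell'} i\lambda\ =\ \lambda\,\frac{\ell'(\ell'+1)}{2}\ \ge\ \frac{\lambda}{2}\Big(\frac{n}{2}\Big)^{2}\ =\ \frac{\lambda}{8}\,n^{2},
\]
and by \eqref{codimNewton} this lower-bounds $\codim(\mathcal{A}_{g_n}[u_n],\mathcal{A}_{g_n})$; since $\lambda$ is fixed, this is the asserted at-least-quadratic growth.

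The one delicate point, and the step I would be most careful with, is the normalization in the second paragraph: $\Omega$ only records lattice points with $x\le g_n$, so I must ensure the chosen portion of the slope-$\lambda$ segment really lies over $x$-values in $[0,g_n]$ rather than merely in $[0,2g_n]$. This is exactly where the symmetry of $u_n$, the monotonicity of its slopes, and the elementary fact that the slope-$1/2$ segment of a symmetric Newton polygon is centered at $x=g_n$ enter. Everything else is routine bookkeeping.
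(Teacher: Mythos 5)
Your argument is correct, and it takes a genuinely more direct route than the paper's. The paper first decomposes $u_n=\nu^n\oplus\nu_n$ with $\nu=(\lambda,1-\lambda)$, uses monotonicity of the codimension under the partial order to replace $u_n$ by $\nu^n\oplus\ord^{g_n-nh}$, discards the ordinary part, and then invokes the quadratic growth of $\codim(\mathcal{A}_{nh}[\nu^n],\mathcal{A}_{nh})$ from \Cref{QuadraticGrowth}; your proof instead applies \eqref{codimNewton} once, directly to $u_n$, and counts the triangular array of lattice points under the slope-$\lambda$ segment. What the paper's reduction buys is that it never has to worry about where the slope-$\lambda$ segment sits relative to $x=g_n$, since in $\nu^n\oplus\ord^{g_n-nh}$ the nontrivial slopes are pushed to the left; what your approach buys is self-containedness (no appeal to monotonicity of $\codim$ along the partial order) and an explicit constant, $\#\Omega(u_n)\ge\lambda n^2/8$. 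Your handling of the one delicate point — that $\Omega$ only sees $x\le g_n$ — is right: for $\lambda<1/2$ the whole segment lies over $[0,g_n-m_{1/2}/2]$ because the slopes $<1/2$ have total multiplicity $g_n-m_{1/2}/2$ by symmetry, and for $\lambda=1/2$ the segment is centered at $x=g_n$ so its left half (of integral length, since integral break points force $m_{1/2}$ to be even) contributes. Two cosmetic remarks: the bound $\nu(a+i)\le g_n$ is immediate from convexity since the polygon lies below the chord $y=x/2$, so in fact $\nu(a+i)\le g_n/2$; and if one does not wish to check that $\ell'$ is an integer, summing over $1\le i\le\lfloor\ell'\rfloor$ loses nothing. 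Incidentally, your direct count also quietly repairs a small imprecision in the paper's wording: writing $u_n=\nu^n\oplus\nu_n$ presumes the multiplicity of $\lambda$ is at least $n\cdot h$ rather than $n$ (one should really take $\nu^{\lceil n/h\rceil}$ there), whereas your argument uses only the stated hypothesis.
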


\begin{proof}
	Let $\nu=(\lambda,1-\lambda)$ and let $h$ be the height of $\nu$. 
	By hypothesis, $u_n=\nu^n\oplus \nu_n$ for some symmetric Newton polygon $\nu_n$ for each $n\in {\mathbb N}$ and $g_n\geq nh$. 
Since $\nu_n$ lies on or above $\ord^{g_n-nh}$, then $u_n=\nu^n\oplus \nu_n$ lies on or above $\nu^n \oplus \ord^{g_n-nh}$.  Hence 
$$\codim (\mathcal{A}_{g_n}[u_n],\mathcal{A}_{g_n}) \ge \codim (\mathcal{A}_{g_n}[\nu^n \oplus \ord^{g_n-nh}],\mathcal{A}_{g_n}).$$ 
By \eqref{codimNewton}, or alternatively \Cref{codim}, 
\[ \codim (\mathcal{A}_{g_n}[\nu^n \oplus \ord^{g_n-nh}],\mathcal{A}_{g_n}) \ge \codim (\mathcal{A}_{nh}[\nu^n],\mathcal{A}_{nh}).\]
Thus $\codim (\mathcal{A}_{g_n}[u_n],\mathcal{A}_{g_n}) \ge \codim (\mathcal{A}_{nh}[\nu^n],\mathcal{A}_{nh})$, 
which is sufficient by \Cref{QuadraticGrowth}. \end{proof}

\subsection{Verifying condition (U)}

Given a sequence $\{u_n\}_{n\in{\mathbb N}}$ of symmetric Newton polygons of
increasing height, we state simple criteria to ensure that 
all but finitely many of them satisfy condition (U). 
Let $2g_n$ be the height of $u_n$. 

\begin{proposition}\label{thm_unlikely}
	Assume that $g_n$ grows linearly in $n$ and that there exists $\lambda\in {\mathbb Q}\cap (0,1)$ such that the multiplicity of $\lambda$ as a slope of $u_n$ grows linearly in $n$, for all sufficiently large $n\in {\mathbb N}$.  Then, for all sufficiently large $n$, the Newton polygon $u_n$ satisfies condition (U). 	
\end{proposition}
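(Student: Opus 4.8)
The plan is to prove that $\codim(\mathcal{A}_{g_n}[u_n],\mathcal{A}_{g_n})$ grows at least quadratically in $n$, whereas $\dim(\mathcal{M}_{g_n})$ grows only linearly, so that the inequality defining condition (U) holds for all sufficiently large $n$. Since $g_n\to\infty$, recall that $\dim(\mathcal{M}_{g_n})=3g_n-3$ for all large $n$, and the linear-growth hypothesis on $g_n$ supplies a constant $C_1$ with $\dim(\mathcal{M}_{g_n})\le C_1 n$ for all large $n$. Thus the whole statement reduces to the quadratic lower bound on the codimension of the Newton stratum.

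For that bound, I would set $\nu:=(\lambda,1-\lambda)$, a fixed symmetric Newton polygon whose genus I denote by $h$, and run the argument from the proof of \Cref{QuadraticGrowth2}. By hypothesis the multiplicity of $\lambda$ as a slope of $u_n$ is at least $cn$ for some $c>0$ and all large $n$; since the multiplicity of $\lambda$ in any Newton polygon with integral break points is a multiple of the denominator $t$ of $\lambda$, and the multiplicity of $\lambda$ in $\nu$ equals $t$, we may write $u_n=\nu^{k_n}\oplus\nu_n'$ with $\nu_n'$ a symmetric Newton polygon and $k_n\ge cn/t$ for all large $n$. As $\nu_n'$ lies on or above $\ord^{\,g_n-hk_n}$, the polygon $u_n$ lies on or above $\nu^{k_n}\oplus\ord^{\,g_n-hk_n}$, so by \eqref{codimNewton}, together with the comparison for adding $\ord$-summands already used in the proof of \Cref{QuadraticGrowth2},
\[
\codim(\mathcal{A}_{g_n}[u_n],\mathcal{A}_{g_n})\ \ge\ \codim\bigl(\mathcal{A}_{hk_n}[\nu^{k_n}],\mathcal{A}_{hk_n}\bigr).
\]
Applying \Cref{QuadraticGrowth2} to the sequence $\{\nu^{k}\}_{k}$ — for which the multiplicity of $\lambda$ in $\nu^{k}$ is at least $k$ — (or \Cref{QuadraticGrowth} directly when $\lambda=1/2$), the right-hand side is at least $C_2 k_n^2$ for some $C_2>0$ and all large $k_n$; since $k_n\ge cn/t\to\infty$, this gives $\codim(\mathcal{A}_{g_n}[u_n],\mathcal{A}_{g_n})\ge C_2(c/t)^2 n^2$ for all large $n$.

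Combining the two estimates, once $n>C_1 t^2/(C_2 c^2)$ and $n$ exceeds the other finitely many thresholds above, we obtain $\dim(\mathcal{M}_{g_n})\le C_1 n< C_2(c/t)^2 n^2\le \codim(\mathcal{A}_{g_n}[u_n],\mathcal{A}_{g_n})$, which is exactly condition (U) for $u_n$. The argument is a light repackaging of \Cref{QuadraticGrowth2}; the one point that needs care is to check that the number $k_n$ of copies of $(\lambda,1-\lambda)$ that can be split off from $u_n$ is at least a fixed positive constant times $n$, not merely $k_n\to\infty$, since this is what promotes the quadratic bound in $k_n$ to a quadratic bound in $n$. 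This is immediate from the linear-growth hypothesis on the multiplicity of $\lambda$, so I do not anticipate any genuine obstacle beyond this bookkeeping.
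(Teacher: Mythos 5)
Your proposal is correct and follows essentially the same route as the paper: the paper's proof is a one-line reduction to \Cref{QuadraticGrowth2} (quadratic lower bound on $\codim(\mathcal{A}_{g_n}[u_n],\mathcal{A}_{g_n})$ versus the linear growth of $\dim(\mathcal{M}_{g_n})=3g_n-3$), and you simply unwind that lemma's proof while tracking the constant $c$ in the linear-growth hypothesis. Your extra bookkeeping (splitting off $k_n\ge cn/t$ copies of $(\lambda,1-\lambda)$ rather than $n$ copies) is a harmless and slightly more careful rendering of the same argument.
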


\begin{proof}
	By \Cref{QuadraticGrowth2},  $\codim(\mathcal{A}_{g_n}[u_n], \mathcal{A}_{g_n})$ is quadratic in $n$ 
	while $\dim(\mathcal{M}_{g_n})= 3g_n-3$ is linear in $n$ by hypothesis. 
	Thus $\dim (\mathcal{M}_{g_n}) < \codim(\mathcal{A}_{g_n}[u_n],\mathcal{A}_{g_n})$ for $n\gg 0$.
\end{proof}

\begin{proposition}\label{Codimforss}
	If there exists $t\in {\mathbb R}_{>0}$ such that the multiplicity of $1/2$ as a slope of $u_n$ is at least $2tg_n$, for all $n\in {\mathbb N}$, then $u_n$ satisfies condition (U) for each $n \in {\mathbb N}$ such that  $g_n \ge 12/t^2$. 
\end{proposition}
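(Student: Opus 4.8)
The plan is to bound $\codim(\mathcal{A}_{g_n}[u_n],\mathcal{A}_{g_n})$ from below using \eqref{codimNewton}, exploiting the elementary fact that a symmetric lower convex polygon with many slopes equal to $1/2$ must coincide with the supersingular polygon $y=x/2$ on a long interval around its midpoint. Write $g:=g_n$ and draw $u_n$ from $(0,0)$ to $(2g,g)$ with height function $h$. First I would record that the hypothesis forces $t\le 1$, since the multiplicity of any slope of $u_n$ is at most $2g_n$ (the degenerate indices with $g_n=0$ can be ignored). Since $u_n$ is symmetric, $h(2g-x)=g-h(x)$, so $h(g)=g/2$; since $u_n$ is convex and has at least $2tg$ slopes equal to $1/2$, the set of abscissae where the slope equals $1/2$ is an interval $[a,b]$ with $b-a\ge 2tg$, and symmetry gives $a+b=2g$, hence $a\le g(1-t)\le g\le b$. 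On $[a,b]$ the polygon has constant slope $1/2$ and passes through $(g,g/2)$, so $h(x)=x/2$ there; in particular $h(x)=x/2$ for every $x\in[g(1-t),g]$.

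The second step is a lattice-point count. For every integer $x$ with $\lceil g(1-t)\rceil\le x\le g$ and every integer $y$ with $0\le y\le\lceil x/2\rceil-1$, the point $(x,y)$ lies strictly below $u_n$ and satisfies $0\le x,y\le g$ (the bound $y\le g$ because $y\le x/2\le g/2$), so $(x,y)\in\Omega(u_n)$. These points being pairwise distinct,
\[
\codim(\mathcal{A}_g[u_n],\mathcal{A}_g)=\#\Omega(u_n)\ \ge\ \sum_{x=c}^{g}\Big\lceil\frac{x}{2}\Big\rceil\ \ge\ \frac{(g+c)(g-c+1)}{4}\ \ge\ \frac{t g^2}{4},
\]
where $c:=\max(1,\lceil g(1-t)\rceil)$; the last inequality uses $g+c\ge g$ together with $g-c+1\ge gt$, which holds because $c\le g(1-t)+1$ and, in the case $c=1$, because $t\le 1$.

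Finally I would conclude: if $g=g_n\ge 12/t^2$, then $tg\ge 12/t\ge 12$ (using $t\le 1$), so $\tfrac14 t g^2=\tfrac14 g\,(tg)\ge 3g>3g-3=\dim\mathcal{M}_{g}$, which is exactly condition (U). I do not expect a serious obstacle; the step requiring the most care is the structural claim that the slope‑$1/2$ locus of $u_n$ is an interval centered at $x=g$ on which $h$ agrees with $x\mapsto x/2$ — this must be argued purely from the symmetry and convexity of the polygon, not from $u_n$ being realized by an abelian variety — together with keeping the floor/ceiling bookkeeping tight enough to recover the explicit constant $12/t^2$. As an alternative packaging one could write $u_n=\sss^{\lfloor tg\rfloor}\oplus\nu_n'$ with $\nu_n'$ lying on or above $\ord^{\,g-\lfloor tg\rfloor}$ and feed this into the monotonicity of $\oplus$ used in the proof of \Cref{QuadraticGrowth2} and into \Cref{QuadraticGrowth}; but that route yields only $\codim\gtrsim\lfloor tg\rfloor^2/4$, which degrades for small $t$, so I would use the direct count above.
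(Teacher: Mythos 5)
Your argument breaks at the structural claim, and the error propagates into a final bound that is simply false. The symmetry of a symmetric Newton polygon drawn from $(0,0)$ to $(2g,g)$ is \emph{not} $h(2g-x)=g-h(x)$; writing the slopes in increasing order and using that $\lambda$ and $1-\lambda$ occur with equal multiplicity, one gets $h(2g-x)=g-x+h(x)$, which at $x=g$ is a tautology and does not force $h(g)=g/2$. Consequently it is false that $u_n$ coincides with the line $y=x/2$ on the slope-$1/2$ interval: for $u=\ord\oplus\sss$ (so $g=2$) one has $h(2)=1/2\neq 1$. In general every lower convex polygon with these endpoints satisfies $h(x)\le x/2$, often strictly, so your lattice-point count includes many points that are \emph{not} strictly below $u_n$. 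The resulting bound $\codim(\mathcal{A}_g[u_n],\mathcal{A}_g)\ge tg^2/4$ is false: for $u_n=\sss^{\lceil tg\rceil}\oplus\ord^{g-\lceil tg\rceil}$, which satisfies the hypothesis, a direct count (or \eqref{codimNewton} together with \Cref{QuadraticGrowth}) gives $\#\Omega(u_n)=\bigl(\lceil tg\rceil^2+2\lceil tg\rceil\bigr)/4\approx t^2g^2/4$, which is smaller than $tg^2/4$ as soon as $t<1$ (e.g.\ $t=1/2$, $g=100$ gives $650$ versus your claimed $1250$).

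Ironically, the route you dismiss in your last sentence as too weak is the paper's proof, and it is exactly strong enough. Writing $u_n=\sss^{\lceil tg_n\rceil}\oplus\nu_n'$ with $\nu_n'$ on or above the ordinary polygon, the monotonicity used in the proof of \Cref{QuadraticGrowth2} gives $\codim(\mathcal{A}_{g_n}[u_n],\mathcal{A}_{g_n})\ge \codim(\mathcal{A}_{\lceil tg_n\rceil}[\sss^{\lceil tg_n\rceil}],\mathcal{A}_{\lceil tg_n\rceil})>(tg_n)^2/4$ by \Cref{QuadraticGrowth}, and $(tg_n)^2/4\ge 3g_n>3g_n-3=\dim(\mathcal{M}_{g_n})$ precisely when $g_n\ge 12/t^2$. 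The threshold $12/t^2$ in the statement is calibrated to the bound $t^2g^2/4$, not to $tg^2/4$; you do not need (and cannot have) the stronger estimate.
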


\begin{proof}
	By the proof of \Cref{QuadraticGrowth2} and \Cref{QuadraticGrowth},
	$$ \codim (\mathcal{A}_{g_n}[u_n],\mathcal{A}_{g_n})\ge \codim (\mathcal{A}_{\lceil tg_n\rceil}[\sss^{\lceil tg_n\rceil}],\mathcal{A}_{\lceil tg_n\rceil}) 
	> (tg_n)^2/4.$$
	So condition (U) for $u_n$ is true when $(tg_n)^2/4 \ge (3g_n-3)$ and thus when $g_n \ge 12/t^2$.
\end{proof}

\begin{proposition}\label{coro_unlikely}
	Let $\nu_1,\nu_2$ be two symmetric Newton polygons, respectively of height $2g\geq 2$, and $2h\geq 0$. Assume $\nu_1$ is not ordinary. Then
	
	\begin{enumerate}
		\item for all sufficiently large $n\in \NN$, the Newton polygon $\nu_1^n\oplus \nu_2$ satisfies condition (U);
		
		\item if $1/2$ occurs as a slope of $\nu_1$ with multiplicity $2\delta>0$, then the Newton polygon $\nu_1^n\oplus \nu_2$ satisfies condition (U), for each
		$n\geq \max\{15g/\delta^2, 9\sqrt{h}/\delta\}$.\footnote{This bound is not sharp, but it is written so that the asymptotic dependency on $g,\delta, h$ is more clear.}
	\end{enumerate}
\end{proposition}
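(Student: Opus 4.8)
The plan is to deduce \Cref{coro_unlikely} from the codimension estimates of \Cref{thm_unlikely}, \Cref{QuadraticGrowth2}, and \Cref{QuadraticGrowth}, applied to the sequence $u_n := \nu_1^n \oplus \nu_2$, which is a symmetric Newton polygon of height $2g_n$ with $g_n = ng + h$.

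For part (1), first I would note that $g_n$ is linear in $n$. Since $\nu_1$ is symmetric, not ordinary, and of height $2g \geq 2$, it has some slope $\lambda \in \QQ \cap (0,1)$ (a symmetric polygon all of whose slopes lie in $\{0,1\}$ equals $\ord^g$); if $m_\lambda \geq 1$ denotes its multiplicity in $\nu_1$, then $\lambda$ occurs in $u_n$ with multiplicity at least $n m_\lambda$, which grows linearly in $n$. Then \Cref{thm_unlikely} applies directly and yields condition (U) for $u_n$ for all $n \gg 0$.

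For part (2), I would argue quantitatively, in parallel with the proofs of \Cref{QuadraticGrowth2} and \Cref{Codimforss}. Writing the multiplicity of $1/2$ in $\nu_1$ as $2\delta$, note $\delta \in \ZZ_{\geq 1}$, since the multiplicity of $1/2$ in any symmetric Newton polygon is even (subtract the even contributions of the pairs $\{\lambda,1-\lambda\}$, $\lambda\neq 1/2$, from the even height). Then $1/2$ occurs in $u_n$ with multiplicity at least $2n\delta$, so $u_n = \sss^{n\delta} \oplus \mu_n$ for a symmetric Newton polygon $\mu_n$ of genus $g_n - n\delta$, and hence $u_n$ lies on or above $\sss^{n\delta} \oplus \ord^{g_n - n\delta}$. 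By \eqref{codimNewton}, the comparison used in the proof of \Cref{QuadraticGrowth2}, and \Cref{QuadraticGrowth},
\[
\codim(\mathcal{A}_{g_n}[u_n],\mathcal{A}_{g_n}) \ \geq\ \codim(\mathcal{A}_{g_n}[\sss^{n\delta}\oplus\ord^{g_n-n\delta}],\mathcal{A}_{g_n}) \ \geq\ \codim(\mathcal{A}_{n\delta}[\sss^{n\delta}],\mathcal{A}_{n\delta}) \ >\ \tfrac{(n\delta)^2}{4}.
\]
As condition (U) for $u_n$ reads $3g_n - 3 < \codim(\mathcal{A}_{g_n}[u_n],\mathcal{A}_{g_n})$, it suffices that $(n\delta)^2/4 \geq 3g_n - 3$, for which $n^2\delta^2 \geq 12ng + 12h$ is enough. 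I would then split this as $\tfrac45 n^2\delta^2 \geq 12ng$ and $\tfrac15 n^2\delta^2 \geq 12h$: the first holds once $n\delta^2 \geq 15g$, i.e.\ $n \geq 15g/\delta^2$, and the second once $n^2\delta^2 \geq 81h$, i.e.\ $n \geq 9\sqrt h/\delta$. Thus condition (U) holds for $u_n$ whenever $n \geq \max\{15g/\delta^2,\ 9\sqrt h/\delta\}$, as claimed.

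The statement is essentially a corollary of the cited results, so there is no serious obstacle; the points needing care are merely that $n\delta \in \ZZ_{\geq 1}$ (so that \Cref{QuadraticGrowth} applies in the displayed chain) and that passing to a lower Newton polygon, and dropping the extra $\ord$-slopes, does not decrease the codimension --- both already contained in \eqref{codimNewton} and in the proof of \Cref{QuadraticGrowth2}. The remainder is the elementary inequality bookkeeping above.
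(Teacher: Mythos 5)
Your proposal is correct and follows essentially the same route as the paper: part (1) is the identical application of \Cref{thm_unlikely}, and part (2) uses the same chain of codimension comparisons from \eqref{codimNewton}, \Cref{QuadraticGrowth2}, and \Cref{QuadraticGrowth} to reduce condition (U) to $(n\delta)^2/4 \geq 3g_n-3$. The only cosmetic difference is the final bookkeeping — you split $n^2\delta^2 \geq 12ng+12h$ into weighted pieces, whereas the paper solves the quadratic inequality exactly and then bounds the root — but both land on the stated threshold $\max\{15g/\delta^2,\ 9\sqrt{h}/\delta\}$.
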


\begin{proof}
	\begin{enumerate}
		\item
		Let $\lambda\in {\mathbb Q}\cap (0,1)$ be a slope of $\nu_1$, occurring with multiplicity $m_\lambda \geq 1$. 
		Then, for each $n\in {\mathbb N}$, the Newton polygon $u_n=\nu_1^n \oplus \nu_2$ has height $2g_n=2(ng+h)$ and slope $\lambda$ occurring with multiplicity at least $m_\lambda n$. 
		Taking $u_n=\nu_1^n \oplus \nu_2$, the sequence $\{u_n\}_{n \in \NN}$ satisfies the hypotheses of \Cref{thm_unlikely}.  Hence, part (1) holds.
		
		\item
		As for \Cref{QuadraticGrowth2}, $\codim(\mathcal{A}_{ng}[\nu_1^n],\mathcal{A}_{ng})\le \codim(\mathcal{A}_{ng+h}[\nu_1^n \oplus \nu_2],\mathcal{A}_{ng+h})$. Therefore, condition (U) for $\nu_1^n\oplus \nu_2$
		is implied by the inequality \begin{equation}\label{ineq}
		\dim(\mathcal{M}_{ng+h})< \codim(\mathcal{A}_{ng}[\nu_1^n],\mathcal{A}_{ng}).
		\end{equation}
		
		Following the proof of \Cref{Codimforss}, if the slope $1/2$ occurs in $\nu_1$ with multiplicity $2\delta$, 
		then inequality \eqref{ineq} is true if $3(ng+h-1) \le (n\delta)^2/4$,
		which holds for $n\ge N:= 6g\delta^{-2}(1+(1+\delta^2(h-1)3^{-1}g^{-2})^{1/2})$.
		The asserted bound follows by noticing that 
		$N < \max\{6(1+\sqrt{2})g/\delta^2, 2\sqrt{3}(1+\sqrt{2})\sqrt{h}/\delta\}$. \qedhere
	\end{enumerate}
\end{proof}

\begin{remark}\label{one}
For $g\gg 0$, 
	\Cref{coro_unlikely} implies that the non-trivial intersections of ${\mathcal T}^\circ_g$ with
	 $\CA_g[\nu]$ in Corollaries \ref{infinite-ord},
	\ref{inf-muord}, and \ref{inf-nu} (resp.\ \ref{twofamilyinfinite} and \ref{twofamilynext}) 
	are unlikely if the $\mu$-ordinary Newton polygon $u$ is not ordinary.
	(resp.\ if either $u_1$ or $u_2$ is not ordinary).  
	\end{remark}

\begin{remark}\label{two}
	Consider the following refinement of Definition \ref{def_unlikely}:
	a non-empty substack $U$ of ${\mathcal T}_g \cap \mathcal{A}_g[\nu]$ is an {\em unlikely intersection}
	if $\codim(U,\mathcal{M}_g)<\codim(\mathcal{A}_g[\nu],\mathcal{A}_g).$
	
	The results in Sections \ref{Sinfclut} and \ref{sec_infclut_nu}
	yield  families $Z$ of cyclic covers of ${\mathbb P}^1$ such that $Z^\circ[\nu]$ is non-empty and has the expected codimension in $Z$.  
	This produces an unlikely intersection as in Remark \ref{two} for $g\gg 0$, when the initial Newton polygon
	$u$ is not ordinary. 
\end{remark}


\section{Applications}\label{Applications1}

We apply the results in Sections \ref{Sinfclut} and \ref{sec_infclut_nu}
to construct smooth curves of arbitrarily large genus $g$ with prescribed Newton polygon $\nu$. 
By \Cref{coro_unlikely}, when $g$ is sufficiently large, the curves in this section
lie in the unlikely intersection ${\mathcal T}_g^\circ \cap {\mathcal A}_g[\nu]$.  

\begin{notation} For $s,t \in \NN$, with $s\leq t/2$ and $\gcd(s,t)=1$, we write $(s/t,(t-s)/t)$ for the Newton polygon 
of height $2t$ with slopes $s/t$ and $(t-s)/t$, each with multiplicity $t$.
\end{notation}

\subsection{Newton polygons with many slopes of $1/2$} 

We obtain examples of smooth curves of arbitrarily large genus $g$
such that the only slopes of the Newton polygons are $0,\frac{1}{2},1$. 
We focus on examples where the multiplicity of $1/2$ is large relative to $g$.

\begin{corollary}\label{3gn-2g}
	Let $m \in \ZZ_{\ge 1}$ be odd and $h=(m-1)/2$.  
	Let $p$ be a prime, $p \nmid 2m$, such that the order $f$ of $p$ in $(\ZZ/m\ZZ)^*$ is even and $p^{f/2}\equiv -1 \bmod m$.
	For $n \in \ZZ_{\ge 1}$,
	there exists a $\mu_m$-cover $C \to {\mathbb P}^1$ defined over $\overline{\mathbb{F}}_p$
	where $C$ is a smooth curve of genus $g=h(3n-2)$ 
	with Newton polygon $\nu=ss^{h n}\oplus ord^{2h(n-1)}$.
	If $n \ge 34/h$, then ${\rm Jac}(C)$ lies in the unlikely intersection ${\mathcal T}^\circ_g \cap 
	{\mathcal A}_{g}[\nu]$.  
\end{corollary}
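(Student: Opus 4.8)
The plan is to apply \Cref{main-thm-ord} to the monodromy datum $\gamma_0=(m,3,(1,1,m-2))$, to compute its $\mu$-ordinary Newton polygon under the hypothesis on $p$, and then to read off condition (U) from the estimates of \Cref{unlikely}. We may assume $m\geq 3$, since $m=1$ makes the statement vacuous. First I would check that $\gamma_0$ is a monodromy datum as in \Cref{Dmonodatum}: its entries $1,1,m-2$ are nonzero modulo $m$, $\gcd(m,1,1,m-2)=1$, and $1+1+(m-2)\equiv 0\bmod m$; since $m$ is odd, $\gcd(m-2,m)=\gcd(2,m)=1$, so by \eqref{Egenus} a cyclic cover with datum $\gamma_0$ has genus $1+\tfrac12(m-3)=h$. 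Because $N=3$, the stack $Z(\gamma_0)$ is $0$-dimensional, so \Cref{muordoccur}(1) together with \Cref{muordsmooth} provides a smooth $\mu_m$-cover of $\PP$ over $\overline{\FF}_p$ with datum $\gamma_0$ whose Newton polygon is the $\mu$-ordinary polygon $u$ of $\Sh(\mu_m,\cf)$ --- exactly the input required by \Cref{main-thm-ord}.

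Next I would identify $u=\sss^{h}$. From \eqref{DMeqn}, $\cf(\tau_n)+1=\sum_{i=1}^{3}\langle -na(i)/m\rangle$ is an integer in $\{1,2\}$ for every $n\not\equiv 0\bmod m$, so $\cf(\tau_n)\in\{0,1\}$; adding the analogous identity for $-n$ and using $\langle x\rangle+\langle-x\rangle=1$ gives $\cf(\tau_n)+\cf(\tau_{-n})=1$, whence $g(\tau_n)=1$ by \Cref{Rdual} (this is also the $N=3$ computation of \cite{LMPT}). The hypothesis that the order $f$ of $p$ in $(\ZZ/m\ZZ)^*$ is even with $p^{f/2}\equiv-1\bmod m$ is precisely the statement that $-1\in\langle p\rangle\leq(\ZZ/m\ZZ)^*$; under the identification $\CT=\ZZ/m\ZZ$ this forces $\tau\mapsto\tau^*$ to be a self-map of every Frobenius orbit $\co\subseteq\ZZ/m\ZZ\setminus\{0\}$, and $*$ has no fixed point on $\co$ because $m$ is odd, so the elements of $\co$ pair up, $|\co|$ is even, and exactly $|\co|/2$ of the $\tau\in\co$ satisfy $\cf(\tau)=1$. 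Substituting $g(\co)=1$ into the formulas of \Cref{Sformulau} yields $s(\co)=0$ and a single slope $\lambda(0)=\tfrac1{|\co|}\cdot\tfrac{|\co|}{2}=\tfrac12$ with multiplicity $|\co|$; summing over orbits gives $u=\bigoplus_\co\sss^{|\co|/2}=\sss^{h}$. Then \Cref{main-thm-ord} yields, for each $n\geq1$, a smooth curve $C$ over $\overline{\FF}_p$ --- which is a $\mu_m$-cover of $\PP$, since the clutching construction behind \Cref{main-thm-ord} only manipulates $\mu_m$-covers of degree $m$ --- with Newton polygon $u^n\oplus\ord^{(m-1)(n-1)}=\sss^{hn}\oplus\ord^{2h(n-1)}=\nu$, whose height $2hn+4h(n-1)=2h(3n-2)$ gives $g=h(3n-2)$.

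For the last assertion I would verify condition (U) of \Cref{def_conditionU} for $\nu$ when $n\geq34/h$. Since $g-hn=2h(n-1)$, we have $\nu=\sss^{hn}\oplus\ord^{\,g-hn}$, so by \eqref{codimNewton} (arguing as in the proof of \Cref{QuadraticGrowth2}) $\codim(\CA_g[\nu],\CA_g)\geq\codim(\CA_{hn}[\sss^{hn}],\CA_{hn})$, and by \Cref{QuadraticGrowth} the latter equals $\tfrac{hn(hn+1)}{2}-\lfloor(hn)^2/4\rfloor\geq\tfrac14hn(hn+2)$. Writing $x=hn$, the inequality $\dim\CM_g=3g-3=9x-6h-3<\tfrac14x(x+2)$ is equivalent to $x^2-34x+24h+12>0$; since $h\geq1$ the left side is at least $x^2-34x+36$, whose largest root is $17+\sqrt{253}<33$, so the inequality holds whenever $x\geq33$, in particular for $x=hn\geq34$, i.e.\ for $n\geq34/h$. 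Hence for such $n$ the curve $C$ realizes an unlikely intersection ${\mathcal T}^\circ_g\cap\CA_g[\nu]$ in the sense of \Cref{def_unlikely}.

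The only genuinely delicate point is the identification $u=\sss^{h}$: it is exactly the self-duality $\co=\co^*$ of every Frobenius orbit --- equivalently $-1\in\langle p\rangle$ --- that forces each orbit-component of the $\mu$-ordinary polygon to be supersingular, and without this congruence hypothesis the orbit-components away from slope $1/2$ become ordinary and the conclusion fails. Everything else is bookkeeping: checking the numerical hypotheses of \Cref{main-thm-ord}, tracking that the construction keeps the cover $\mu_m$-cyclic, and the elementary quadratic estimate for condition (U).
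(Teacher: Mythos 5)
Your proposal is correct and follows essentially the same route as the paper: establish the base case for $\gamma_0=(m,3,(1,1,m-2))$ with $\mu$-ordinary polygon $u=\sss^h$, apply \Cref{inf-muord} with $t=m$ (of which \Cref{main-thm-ord} is the stated special case), and verify condition (U) via the $\codim(\CA_{nh}[\sss^{nh}],\CA_{nh})\ge (nh)(nh+2)/4$ estimate, which is exactly the paper's inequality. The only difference is in identifying $u=\sss^h$: the paper cites Yui's theorem that the explicit curve $y^m=x^2-1$ is supersingular under the congruence hypothesis, whereas you derive it from the slope formulas of \Cref{Sformulau} using that $-1\in\langle p\rangle$ makes every Frobenius orbit self-dual; both are valid, and your computation is the one carried out in the cited reference \cite{LMPT}.
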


\begin{proof}
Let $C \to \PP$ be a $\mu_m$-cover with $\gamma=(m, 3, a)$ where $a=(1,1,m-2)$.  
Without loss of generality, an equation for $C$ is $y^m= x^2 -1$.
By \cite[Theorem 6.1]{yui}, 
the Newton polygon of $C$ is $\sss^h$. 	
The first claim follows from applying \Cref{inf-muord} to $Z(m,3,\gamma)$ with $t=m$.
	As in the proof of \Cref{Codimforss}, the second claim follows from the inequalities:
	\[\codim (\mathcal{A}_{nh}[\sss^{nh}],\mathcal{A}_{nh}) \ge (nh)^2/4+(nh)/2 > \dim (\mathcal{M}_{3nh-2h})=9nh-6h-3.
	\qedhere\] 
	\end{proof}
	
	\begin{remark}
	The Newton polygons in \Cref{3gn-2g} are $\mu$-ordinary; they do not appear in the literature, 
		but the result also follows from \Cref{muordoccur}(3) if $p \equiv -1 \bmod m$ or
	if $p \geq m(N-3)$ where $N$ is the (increasingly large) 
	number of branch points.
\end{remark}

We highlight the case $m=3$ below.  To our knowledge, for any odd prime $p$, 
this is the first time that a sequence of smooth curves has been produced for every $g \in \ZZ_{\ge 1}$ 
such that the multiplicity of the slope $1/2$ in the Newton polygon grows linearly in $g$.

\begin{corollary}\label{2/3ForAllg}
	Let $p \equiv 2 \bmod 3$ be an odd prime.  Let $g \in \ZZ_{\ge 1}$.
	There exists a smooth curve $C_g$ of genus $g$ 
	defined over $\overline{\FF}_p$, whose Newton polygon $\nu_g$
	only has slopes $0, \frac{1}{2}, 1$ and such that the multiplicity of the slope $1/2$ is at least 2$\lfloor g/3 \rfloor$. 
	If $g \geq 107$,
	the curve $C_g$ demonstrates an unlikely intersection
	of the open Torelli locus with the Newton polygon stratum ${\mathcal A}_{g}[\nu_g]$ in ${\mathcal A}_{g}$.
\end{corollary}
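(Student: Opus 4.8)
The plan is to realize $C_g$ as a $\mu_3$-cover of $\PP$ assembled from the genus-$1$ datum $\gamma_0=(3,3,(1,1,1))$ via the clutching corollaries of \Cref{Sinfclut}, handling the three residue classes of $g$ modulo $3$ separately. First I would record that the hypotheses of \Cref{3gn-2g} are satisfied for $m=3$ (so $h=1$): since $p$ is odd and $p\equiv 2\bmod 3$, we have $p\nmid 6$, the order of $p$ in $(\ZZ/3\ZZ)^*$ is $f=2$, and $p^{f/2}=p\equiv-1\bmod 3$. In particular, by \cite[Theorem 6.1]{yui} (or by \Cref{muordoccur}(1), as $N=3$) the curve $y^3=x^2-1$ is supersingular, so $Z^\circ(\gamma_0)[\sss]$ is non-empty and $\sss$ is the corresponding $\mu$-ordinary polygon.

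Next I would construct $C_g$ in each case. If $g\equiv 1\bmod 3$, write $g=3n-2$; then \Cref{3gn-2g} (equivalently, \Cref{inf-muord} applied to $\gamma_0$ with $t=3$) produces a smooth $\mu_3$-cover of genus $g$ with $\nu_g=\sss^n\oplus\ord^{2(n-1)}$, in which the slope $1/2$ has multiplicity $2n\ge 2(n-1)=2\lfloor g/3\rfloor$. If $g\equiv 0\bmod 3$, write $g=3n$; applying \Cref{inf-muord} to $\gamma_0$ with $t=1$ produces a smooth $\mu_3$-cover of genus $n+2n=g$ with $\nu_g=\sss^n\oplus\ord^{2n}$, in which $1/2$ has multiplicity $2n=2\lfloor g/3\rfloor$. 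If $g\equiv 2\bmod 3$ and $g\ge 5$, write $g=3n+2$ with $n\ge 1$; here I would argue in two steps. By its constructive proof, \Cref{inf-muord} applied to $\gamma_0$ with $t=1$ yields an explicit $\mu_3$-cover family of genus $3n$ whose smooth locus realizes the $\mu$-ordinary polygon $\sss^n\oplus\ord^{2n}$, and then \Cref{inf-ord} applied to the monodromy datum of that family, adjoining $\ord^2$ (note $2$ lies in the semigroup generated by $\{3-t:t\mid 3\}=\{0,2\}$), produces a smooth $\mu_3$-cover of genus $3n+2=g$ with $\nu_g=\sss^n\oplus\ord^{2n+2}$, in which $1/2$ has multiplicity $2n=2\lfloor g/3\rfloor$. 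The remaining cases $g=1,2$ (where $\lfloor g/3\rfloor=0$) are covered by any smooth curve of the given genus. In all cases the slopes of $\nu_g$ lie in $\{0,\tfrac12,1\}$.

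For the unlikely-intersection assertion when $g\ge 107$, I would proceed as in the proof of \Cref{3gn-2g}. In every construction above $\nu_g=\sss^b\oplus\ord^{\,g-b}$ with $b\ge\lfloor g/3\rfloor$, so by the monotonicity of Newton-stratum codimensions exploited in \Cref{QuadraticGrowth2} together with the exact count $\codim(\CA_b[\sss^b],\CA_b)=b(b+1)/2-\lfloor b^2/4\rfloor\ge b(b+2)/4$ from \Cref{QuadraticGrowth}, one gets
\[\codim(\CA_g[\nu_g],\CA_g)\ \ge\ \lfloor g/3\rfloor\,(\lfloor g/3\rfloor+2)/4.\]
Setting this against $\dim(\CM_g)=3g-3$, the resulting quadratic inequality is tightest in the class $g=3k+2$, where it reads $k(k+2)/4>9k+3$, i.e.\ $k^2-34k-12>0$; this forces $k\ge 35$, i.e.\ $g\ge 107$. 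The classes $g\equiv 0,1\bmod 3$ yield a strictly smaller threshold, and since the left-hand side above is quadratic and $\dim(\CM_g)$ is linear in $g$, condition (U) of \Cref{def_conditionU} then holds for all $g\ge 107$.

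I expect the main obstacle to be the residue class $g\equiv 2\bmod 3$. For $m=3$, the operation of \Cref{inf-ord} can only adjoin an \emph{even} number of slopes in $\{0,1\}$, so a single pass of the $\mu$-ordinary machinery starting from $\gamma_0$ never lands in genus $\equiv2\bmod 3$; the two-step argument above circumvents this, but to invoke \Cref{inf-ord} legitimately one must keep track of the explicit (iterated-clutch) monodromy datum output by \Cref{inf-muord} and confirm that its smooth locus already carries the relevant $\mu$-ordinary Newton polygon. A secondary subtlety is that the constant $107$ is sharp and is pinned down precisely by this class, so the codimension estimate has to be run with the exact formula for $\codim(\CA_b[\sss^b],\CA_b)$ rather than the coarser bound of \Cref{Codimforss}.
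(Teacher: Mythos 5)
Your proposal is correct and follows essentially the same route as the paper: the paper likewise splits on $g \bmod 3$, invokes \Cref{3gn-2g} for $g=3n-2$, and reaches $g=3n$ and $g=3n+2$ by applying \Cref{inf-ord} with $t=1$ once resp.\ twice (adjoining $\ord^2$ each time), yielding the same polygons $\sss^n\oplus\ord^{2n-2+2\epsilon}$. Your explicit verification that the class $g\equiv 2\bmod 3$ pins down the threshold $g\ge 107$ (via $n^2-34n-12>0$) is a correct filling-in of a computation the paper leaves implicit.
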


\begin{proof}
	If $g = 3n-2$ for some $n$, the result is immediate from \Cref{3gn-2g}.
	For $g =3n -2 + 2 \epsilon$ with $\epsilon = 1$ (resp.\ $\epsilon = 2$), we apply \Cref{inf-ord} with $t=1$ (resp.\ twice)  
	and obtain a smooth curve with Newton polygon $\sss^{n}\oplus \ord^{2n-2+2\epsilon}$. 
\end{proof}

Working with Moonen's families gives examples of families of curves where the multiplicity of the slope $1/2$ 
is particularly high relative to the genus.

\begin{corollary}\label{2/5for4mod5}
	Let $p \equiv 4 \bmod 5$. 
		For $n \in {\mathbb Z}_{\geq 1}$,
		there exists a smooth curve of genus $g=10n-4$ in $Z=Z(5, 5n, (2,2,\dots ,2))$ over  $\overline{\FF}_p$
		with $\mu$-ordinary Newton polygon $u_n= \sss^{4n} \oplus \ord^{6n-4}$.
\end{corollary}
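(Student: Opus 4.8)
The plan is to treat $Z(5,5,(2,2,2,2,2))$ as the base case and then apply the single-induction result \Cref{inf-muord} with $t=m=5$.

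First I would pin down the base family. The monodromy datum $\gamma_1=(5,5,(2,2,2,2,2))$ satisfies the axioms of \Cref{Dmonodatum} and defines a family of genus $10\cdot 1-4=6$ by \eqref{Egenus}; it is the special family $M[16]$ of \cite[Table 1]{moonen}. From \eqref{DMeqn} its signature type is $\cf_1=(2,0,3,1)$, so $g(\tau)=3$ for every $\tau\neq\tau_0$. When $p\equiv 4\bmod 5$ the order of $p$ in $(\ZZ/5\ZZ)^*$ is $2$ and Frobenius acts on $\CT-\{\tau_0\}\cong\ZZ/5\ZZ-\{0\}$ by $n\mapsto -n$, giving the two orbits $\{1,4\}$ and $\{2,3\}$, each of size $2$. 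Feeding $\cf_1$ into the recipe of \Cref{Sformulau}, I expect $u_1(\{1,4\})=\ord^2\oplus\sss$ and $u_1(\{2,3\})=\sss^3$, hence $u_1=\sss^4\oplus\ord^2$ (this value is also recorded, via the Kottwitz method, in \cite[\S6.2]{LMPT2}). By \Cref{muordoccur}(2), the stratum $Z^\circ(\gamma_1)[u_1]$ is non-empty.

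Next I would run the induction. Applying \Cref{inf-muord} to $\gamma_1$ with $t=m=5$ produces, for every $n\in\ZZ_{\geq 1}$, a smooth curve over $\overline{\FF}_p$ whose Newton polygon is
\[ u_1^n\oplus\ord^{mn-n-t+1}=u_1^n\oplus\ord^{4(n-1)}=\sss^{4n}\oplus\ord^{2n}\oplus\ord^{4(n-1)}=\sss^{4n}\oplus\ord^{6n-4}=u_n. \]
What remains is to identify this with the named family: unwinding the constructions in the proofs of \Cref{inf-muord} and \Cref{infinite-ord}, one first marks two unramified fibres to get the generalized datum $(5,7,(0,2,2,2,2,2,0))$ and then iterates the clutching datum of \Cref{Dmd3}, which after $n$ steps gives $(5,5n+2,(0,2^{5n},0))$. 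Applying \Cref{plus1} to strip the two zero entries identifies this with $Z(5,5n,(2,\dots,2))$, which has genus $10n-4$ by \eqref{Egenus}. Finally, $u_n$ is the $\mu$-ordinary polygon of the associated Shimura variety because of the repeated use of \Cref{balanced} built into the proof of \Cref{inf-muord}; equivalently, one can check directly that the signature of $Z(5,5n,(2,\dots,2))$ is $\cf(\tau_j)=-1+5n\langle -2j/5\rangle$ and that \Cref{Sformulau} again yields $\sss^{4n}\oplus\ord^{6n-4}$.

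The genuinely routine steps are the two applications of the slope/multiplicity formulas in \Cref{Sformulau}. The only point requiring care is the bookkeeping: one must keep track of the $\ord^{2n}$ summand hidden inside $u_1^n$ so that it combines with the defect $\ord^{4(n-1)}$ to give exactly $\ord^{6n-4}$, and one must verify that the family produced by the clutching recursion---which natively carries two marked unramified fibres---really is $Z(5,5n,(2,\dots,2))$ once \Cref{plus1} is invoked. Beyond this I do not foresee any serious obstacle; the substantive work was already done in Sections \ref{Sclutch} and \ref{sec_muord}.
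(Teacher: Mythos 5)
Your proposal is correct and follows essentially the same route as the paper: identify $M[16]=Z(5,5,(2,\dots,2))$ as the base case with $u_1=\ord^2\oplus\sss^4$ for $p\equiv 4\bmod 5$ and apply \Cref{inf-muord} with $t=m=5$, the paper's proof being exactly this in one line. Your additional bookkeeping (the defect computation $\ord^{4(n-1)}$, the identification of the clutched family with $Z(5,5n,(2,\dots,2))$ via \Cref{plus1}, and the direct signature check) is accurate and simply fills in details the paper leaves implicit.
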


For $n\geq 7$, 
		the curves with Newton polygon $u_n$ from \Cref{2/5for4mod5} 
		lie in the unlikely intersection ${\mathcal T}_g^\circ \cap \CA_g[\nu]$.

\begin{proof}
When $p \equiv 4 \bmod 5$, $M[16]$ has $\mu$-ordinary Newton polygon $u_1=\ord^2\oplus \sss^4$.\footnote{The codimension condition in \eqref{Ecodimequal} does not hold for $\nu=\sss^6$.}
The claim is immediate from \Cref{inf-muord}. 	\end{proof}

\begin{corollary}\label{table}
	Under the given congruence condition on $p$, and with $p \gg 0$, 
	there exists a smooth curve in $Z=Z(m, N, a)$ 
	over $\overline{{\mathbb F}}_p$ 
	with Newton polygon $\nu$ and ${\rm codim}(Z[\nu], Z)=1$.
	\begin{center}
		\begin{tabular}{ |c|c|c|c|c|c| }
			\hline
			construction &(m,N,a) &	genus & congruence & Newton Polygon $\nu$ \\
			\hline
			
			$M[9] + M[9]$ &$(6,6,(1,1,4,4,4,4))$ & 8 & $2 \bmod 3$ & $\sss^4\oplus \ord^4$\\
			\hline
			
			$M[9] + M[12]$ &$(6,6,(1,1,1,1,4,4))$ & 9 & $2 \bmod 3$ & $\sss^5\oplus \ord^4$\\
			\hline
			
			$M[12] + M[12]$ &$(6, 6, (1,1,1,1,1,1))$ & 10 & $2 \bmod 3$ & $\sss^7 \oplus \ord^3$ \\
			
			
			\hline		
			$M[18] + M[18]$ &$(10,6,(3,3,6,6,6,6))$ & 16 & $9 \bmod 10$ & $\sss^{10}\oplus \ord^6$\\
			\hline
			
			$M[20] + M[20]$ &$(12,6,(4,4,7,7,7,7))$ & 19 & $11 \bmod 12$ & $\sss^{12}\oplus \ord^7$\\
			\hline
			
		\end{tabular}
	\end{center}
\end{corollary}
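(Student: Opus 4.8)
The plan is to handle the five rows uniformly by a single application of \Cref{differentm} to the ordered pair of Moonen families named in the ``construction'' column. For a row labelled $M[i]+M[j]$, set $\gamma_1$ to be the monodromy datum of $M[i]$ (the $\mu$-ordinary input) and $\gamma_2$ that of $M[j]$ (the non-$\mu$-ordinary input); in \cite[Table~1]{moonen} these are the dimension-$1$ families with $N=4$ branch points, concretely $M[9]=Z(6,4,(1,4,4,3))$, $M[12]=Z(6,4,(1,1,1,3))$, $M[18]=Z(10,4,(3,6,6,5))$, $M[20]=Z(12,4,(4,7,7,6))$. Thus $m_1=m_2=m$, $d_1=d_2=1$, and gluing $\gamma_1$ to $\gamma_2$ along the last branch point of $\gamma_1$ and the first of $\gamma_2$ — each carrying inertia $m/2$ — satisfies hypothesis (A) because $m/2+m/2\equiv0\bmod m$. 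First I would check that the induced datum $\gamma_3$ of \Cref{Dmd3} is exactly the $(m,N,a)$ listed (a short computation of the multiset $a_3$), with $r=\gcd(m,m/2)=m/2$, $\epsilon=r_0-1=m/2-1$, and $g_3$ the stated genus.

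Next I would compute, for each $M[i]$ and the prescribed congruence on $p$, its $\mu$-ordinary Newton polygon $u_i$ and its basic Newton polygon $\nu_i$, using the signature formula \eqref{DMeqn} and the slope-and-multiplicity recipe of \Cref{Sformulau} (these can also be read off from \cite[\S6]{LMPT2}, cf.\ \Cref{TABLE}); for the congruences in the table $B(\Sh(M[i]))$ has exactly two elements, $u_i$ and a \emph{supersingular} $\nu_i$, so $\codim(\Sh(M[i])[\nu_i],\Sh(M[i]))=1$. Then one checks the amalgam identity $u_1\oplus\nu_2\oplus\ord^{\epsilon}=\nu$: e.g.\ in the top row $M[9]+M[9]$ at $p\equiv2\bmod3$ one gets $u_1=\ord^2\oplus\sss$ and $\nu_2=\sss^{3}$, whence $u_1\oplus\nu_2\oplus\ord^{2}=\sss^{4}\oplus\ord^{4}$; the other four rows are analogous. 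It remains to confirm hypotheses (B) and (C) for the ordered pair $(\gamma_1,\gamma_2)$: (B) is automatic by \Cref{rmk_hp2}(2) in the four rows with $\gamma_1=\gamma_2$, and in the single mixed row $M[9]+M[12]$ it is checked orbit-by-orbit from $\cf_{M[9]}$ and $\cf_{M[12]}$ (equivalently, that $u_3=u_1\oplus u_2\oplus\ord^{\epsilon}$ via \Cref{balanced}); and (C) holds because in every case each $\sigma$-orbit component $u_i(\co)$ has at most two distinct slopes, so $u_i\ll_{\text{(C)}}u_i$ by \Cref{ullu}, hence $u_1\ll_{\text{(C)}}u_2$.

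Now I would feed in the geometric inputs: $Z_1^\circ[u_1]\neq\emptyset$ by \Cref{muordoccur}(2) (each $M[i]$ is a Moonen family); $Z_2^\circ[\nu_2]\neq\emptyset$ for all sufficiently large $p$ by \cite[Theorem~1.1]{LMPT2}, since $\nu_2$ is supersingular; and every irreducible component of $Z_2^\circ[\nu_2]$ satisfies the expected-codimension condition \eqref{Ecodimcond} by \Cref{rmk_goodinput}. Then \Cref{differentm} produces a smooth curve in $Z_3^\circ$ with Newton polygon $u_1\oplus\nu_2\oplus\ord^{\epsilon}=\nu$, lying on an irreducible component $\Gamma_3$ of $Z_3^\circ[\nu]$ with $\codim(\Gamma_3,Z_3)=\codim(\Sh_3[\nu],\Sh_3)$, which equals $\codim(\Sh_2[\nu_2],\Sh_2)=1$ by \Cref{codimension1} and \eqref{eqn_codim_cpt}. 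Since $\nu\neq u_3$ and $Z_3^\circ[u_3]\neq\emptyset$ by \Cref{thm_muord}, the stratum $Z_3[\nu]$ is not dense in $Z_3$, so $\codim(Z_3[\nu],Z_3)=1$ with $Z_3=Z(m,N,a)$, as claimed.

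The main obstacle is the bookkeeping in the middle step: matching the Moonen labels to their monodromy data, computing the $\mu$-ordinary and basic Newton polygons of each $M[i]$ at the required congruence, and confirming that the amalgamate sum $u_1\oplus\nu_2\oplus\ord^{\epsilon}$ is exactly the polygon $\nu$ in the table. The specific congruences do real work here: they force $\nu_i$ to be supersingular (so that \cite[Theorem~1.1]{LMPT2} applies) and pin down the $\sigma$-orbit structure on $\CT$ that feeds \Cref{Sformulau}, hypothesis (B), and \Cref{ullu}. The only other delicate point is the direct verification of hypothesis (B) in the mixed row $M[9]+M[12]$, where $\gamma_1\neq\gamma_2$.
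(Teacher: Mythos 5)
Your proposal is correct and follows exactly the route the paper intends (the paper gives no written proof of this corollary, but the ``construction'' column encodes precisely your recipe: clutch the two named one-dimensional Moonen families along the branch points with inertia $m/2$, feed the $\mu$-ordinary polygon of the first and the basic supersingular polygon of the second into \Cref{differentm}, and use \Cref{rmk_goodinput} plus the two-element Kottwitz sets to get the codimension-one statement). All your numerics check out: $\epsilon=m/2-1$, the genera, and the amalgams $u_1\oplus\nu_2\oplus\ord^{\epsilon}$ agree with every row of the table. One small logical slip: in the mixed row $M[9]+M[12]$ you deduce $u_1\ll_{\text{(C)}}u_2$ from $u_i\ll_{\text{(C)}}u_i$, which is not a valid implication in general (e.g.\ $u_1(\co)=\sss$, $u_2(\co)=\ord$ would violate (C) while each satisfies \Cref{ullu}); here hypothesis (C) does hold for the ordered pair $(M[9],M[12])$, but it must be checked orbit-by-orbit --- on $\{1,5\}$ one has $u_1(\co)=\ord^2$ and $u_2(\co)=\sss^2$, so the excluded open interval is empty, and likewise on the remaining orbits --- just as you already do for hypothesis (B).
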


\subsection{Newton polygons with slopes $1/3$, $1/4$, and beyond}

\begin{corollary} \label{Cslope13}\label{table2}
Let $n \in \mathbb{Z}_{\geq 1}$.
The following Newton polygons occur for Jacobians of smooth curves over $\overline{\FF}_p$ 
under the given congruence condition on $p$.
	\begin{center}
		\begin{tabular}{ |c|c|c| }
			\hline
			 congruence & $\nu$ ($\mu$-ordinary) & $\nu$ (non $\mu$-ordinary)
			 for $p \gg 0$ \\
			\hline
$2,4 \bmod 7$ & $(1/3,2/3)^n \oplus \ord^{6n-6}$ & NA \\
\hline

			$3,5 \bmod 7$ & 
			$(1/3,2/3)^{2n}\oplus \ord^{6n-6}$ & $(1/3,2/3)^{2n-2} \oplus \sss^6 \oplus \ord^{6n-6}$  \\
			\hline
			
			
			$2,5 \bmod 9$ & 
			$(1/3,2/3)^{2n} \oplus \sss^{n}  \oplus \ord^{8n-8}$ & $(1/3,2/3)^{2n-2} \oplus \sss^{n+6}  \oplus \ord^{8n-8}$  \\	
			\hline 

			$4,7 \bmod 9$ & 
			$(1/3, 2/3)^{2n} \oplus \ord^{9n-8}$ & $(1/3, 2/3)^{2n-2} \oplus \sss^6 \oplus \ord^{9n-8}$	\\			
			\hline
			
		\end{tabular}
	\end{center}
\end{corollary}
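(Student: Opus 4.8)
This is an application of the infinite clutching machinery of Sections~\ref{Sinfclut} and \ref{sec_infclut_nu}. For each of the four congruence classes in the table I would fix one base family of cyclic covers of $\PP$ with $m\in\{7,9\}$ (or, in the row where it is needed, a compatible pair) whose $\mu$-ordinary Newton polygon --- and, for the non $\mu$-ordinary column, also its basic Newton polygon --- is already known for the relevant order of $p$ modulo $m$; then I would feed it into \Cref{inf-muord} for the $\mu$-ordinary entries and into \Cref{inf-nu} (with \Cref{twofamilynext} when the base is a pair with $m_1=m_2$) for the non $\mu$-ordinary entries. With $t=m$, \Cref{inf-muord} outputs $u^n\oplus\ord^{(m-1)(n-1)}$, and $(m-1)(n-1)$ equals $6n-6$ when $m=7$ and $8n-8$ when $m=9$; likewise \Cref{inf-nu} outputs $u^{n-1}\oplus\nu\oplus\ord^{(m-1)(n-1)}$. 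Matching these formulas against the table is then bookkeeping of amalgamate sums of $(1/t,(t-1)/t)$-, $\sss$-, and $\ord$-pieces, together with a height count to read off the genus.

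\textbf{Choosing the base families.} First I would realize the $n=1$ case of each entry as the Newton polygon of an explicit base family. For $p\equiv 2,4\bmod 7$ (so $p$ has order $3$ modulo $7$, with Frobenius orbits $\{1,2,4\}$ and $\{3,5,6\}$ on $\ZZ/7\ZZ\setminus\{0\}$) I would take the $N=3$ (hence CM, hence special) family $Z(7,3,a)$ whose signature, computed from \eqref{DMeqn}, makes the slope and multiplicity formulas \eqref{eq_slopes}--\eqref{eq_multi} output $(1/3,2/3)$; there is no non $\mu$-ordinary entry here precisely because a $0$-dimensional $Z$ carries a single Newton polygon. For $p\equiv 3,5\bmod 7$ (order $6$, a single Frobenius orbit) I would take Moonen's special family $Z(7,4,(2,4,4,4))$, for which the same formulas give $\mu$-ordinary polygon $(1/3,2/3)^2$ and, for $p\gg0$, basic polygon $\sss^6$. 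For $m=9$ I would use Moonen's special $\mu_9$-family $Z(9,4,(1,1,1,6))$: when $p\equiv 4,7\bmod 9$ (order $3$; orbits $\{1,4,7\},\{2,8,5\}$ of size $3$ and $\{3\},\{6\}$ of size $1$) the formulas give $\mu$-ordinary polygon $(1/3,2/3)^2\oplus\ord$ and basic polygon $\sss^6\oplus\ord$, the $\ord$-summand being the forced contribution of the order-$3$ orbits $\{3\},\{6\}$; when $p\equiv 2,5\bmod 9$ (order $6$; orbits of sizes $6$ and $2$) they give $\mu$-ordinary polygon $(1/3,2/3)^2\oplus\sss$ and basic polygon $\sss^7$. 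In each case $Z^\circ[u]$ is non-empty by \Cref{muordoccur} ($N=3$, or a Moonen family), and for $p\gg0$ the basic stratum $Z^\circ[\nu]$ is non-empty with the expected codimension by \Cref{MSS}, \cite[Theorem~1.1]{LMPT2}, and \Cref{rmk_goodinput}.

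\textbf{Running the induction.} For the $\mu$-ordinary column I would apply \Cref{inf-muord} with $t=m$ to each base family, obtaining smooth curves with Newton polygon $u^n\oplus\ord^{(m-1)(n-1)}$ for all $n\ge1$; expanding $u^n$ and collecting the $\ord$-pieces reproduces the four $\mu$-ordinary entries, with genus equal to the half-height. For the non $\mu$-ordinary column I would first check $u\ll_{\text{(C)}}u$: by \Cref{ullu} this is exactly the condition that $u(\co)$ has at most two distinct slopes for every orbit $\co$, which holds for all the $u$'s above. Then \Cref{inf-nu} --- using hypothesis~(A) from the evident branch-point relation, hypothesis~(B) via \Cref{balanced} and \Cref{rmk_hp2}, and the hypothesis~(C) just verified --- yields smooth curves with Newton polygon $u^{n-1}\oplus\nu\oplus\ord^{(m-1)(n-1)}$ for $p\gg0$; substituting $\nu=\sss^6$, $\sss^6\oplus\ord$, or $\sss^7$ and simplifying (for instance $\sss^{n-1}\oplus\sss^7=\sss^{n+6}$, and $\ord^{n-1}\oplus\ord\oplus\ord^{8n-8}=\ord^{9n-8}$) gives the remaining three entries. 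In the row where the base is a compatible pair $\gamma_1,\gamma_2$ with $m_1=m_2$, I would use \Cref{twofamilynext} in place of \Cref{inf-nu}, verifying $u_1\ll_{\text{(C)}}u_2$ and $u_2\ll_{\text{(C)}}u_2$ the same way.

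\textbf{Main obstacle.} The real work is concentrated in choosing the base families and checking the three hypotheses for them: the Newton polygons listed above have to be extracted by hand from \eqref{DMeqn} and \eqref{eq_slopes}--\eqref{eq_multi}, and for $m=9$ one must track the contribution of each Frobenius orbit separately --- this is the source of the $\sss$- and $\ord$-summands in the $\mu$-ordinary polygons and of the difference between the two $m=9$ rows. The most delicate point for the non $\mu$-ordinary entries is that \Cref{differentm}, and hence \Cref{inf-nu}, requires the base stratum $Z_2^\circ[\nu_2]$ to realize the expected codimension; this is why I would insist on taking $\gamma_2$ among Moonen's twenty families, where \Cref{rmk_goodinput} applies, and it is also why those entries carry the hypothesis $p\gg0$, inherited from the supersingular case of \cite[Theorem~1.1]{LMPT2}.
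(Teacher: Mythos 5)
Your proposal is correct and follows essentially the same route as the paper: the paper likewise obtains lines 1–3 by applying Corollaries \ref{inf-muord} and \ref{inf-nu} to the $N=3$ family $(7,3,(1,1,5))$, to $M[17]$, and to $M[19]$, and line 4 again from $M[19]$ with $p\equiv 4,7\bmod 9$ after checking hypothesis (C) orbit by orbit (your $(9,4,(1,1,1,6))$ is equivalent to $M[19]=(9,4,(3,5,5,5))$ via the unit $5$ mod $9$). The only cosmetic difference is your optional mention of \Cref{twofamilynext}, which none of the four rows actually requires; the paper relegates that variant to a footnote.
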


We remark that none of the last three lines follows from \cite[Theorem 6.1]{Bouwprank} because there are at least two Newton polygons 
in $B(\mu_m, \cf)$ having the maximal $p$-rank.

\begin{proof}
Lines 1, 2, and 3 are obtained from applying 
both Corollaries \ref{inf-muord} and \ref{inf-nu} to the families $(7, 3, (1,1,5))$, $M[17]$, and $M[19]$, 
respectively. 

For the last line, let $m=9$ and $p \equiv 4,7 \bmod 9$.
There are four orbits $\co_1=(1,4,7)$, $\co_2=(2,5,8)$, $\co_3=(3)$, and $\co_4=(6)$. 
The $\mu$-ordinary Newton polygon for the family $M[19]$ is $u=(1/3,2/3)^2\oplus \ord$, and $\nu=\sss^6\oplus \ord$ also occurs for a smooth curve in the family. By \cite[Section 6.2]{LMPT2}, for each $\co\in\CO$, $u(\co)$ has at most 2 slopes, hence hypothesis (C) is satisfied, and  we obtain the Newton polygons in line 4 from \Cref{inf-muord,inf-nu}.\footnote{Alternatively, 
applying  \Cref{twofamilyinfinite,twofamilynext}
produces the Newton polygons $(1/3,2/3)^{n_1+2n_2}\oplus \ord^{8n_1+9n_2-14}$ 
	and $(1/3,2/3)^{n_1+2n_2-2} \oplus  \sss^6\oplus \ord^{8n_1+9n_2-13}$ for $n_1,n_2 \in \ZZ_{\geq 1}$.}
\end{proof}

\begin{corollary} \label{Cslope14}
Let $n \in \mathbb{Z}_{\geq 1}$.
The following Newton polygons occur for Jacobians of smooth curves over $\overline{\FF}_p$ 
under the given congruence condition on $p$.
	\begin{center}
		\begin{tabular}{ |c|c|c| }
			\hline
			 congruence & $\nu$ ($\mu$-ordinary) & $\nu$ (non $\mu$-ordinary)
			 for $p\gg 0$ \\
			\hline
$2,3 \bmod 5$ & $(1/4,3/4)^{n}\oplus \ord^{4n-4}$ & $(1/4,3/4)^{n-1}\oplus \sss^4\oplus \ord^{4n-4}$ \\ \hline
$3,7 \bmod 10$ & $(1/4,3/4)^n \oplus \sss^{2n}  \oplus \ord^{9n-9}$ & $(1/4,3/4)^{n-1} \oplus \sss^{2n+4}  \oplus \ord^{9n-9}$ \\
			\hline
		\end{tabular} \end{center}
		\end{corollary}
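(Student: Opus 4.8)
The plan is to run the argument of \Cref{Cslope13} row by row. For each line I would exhibit a one-dimensional special family $Z=Z(\gamma)$ from \cite[Table 1]{moonen}, identify its $\mu$-ordinary Newton polygon $u$ and its basic Newton polygon $\nu$ (the two column entries), check that $u\ll_{\text{(C)}} u$ via \Cref{ullu}, and then apply \Cref{inf-muord} and \Cref{inf-nu}. Because $Z$ is a special family, \Cref{rmk_goodinput} supplies the codimension condition \eqref{Ecodimcond} required by \Cref{inf-nu}; the basic stratum $Z^\circ[\nu]$ is non-empty for $p\gg0$ by \cite[Theorem 1.1]{LMPT2} (each $Z$ here is one-dimensional), which accounts for the ``$p\gg0$'' in the last column, whereas the $\mu$-ordinary column needs only $p\nmid m$ since $Z^\circ[u]$ is then non-empty by \Cref{muordoccur}(2).

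For the first row, take $\gamma=(5,4,(1,1,1,2))$, of genus $4$; by \eqref{DMeqn} its signature type is $\cf=(2,1,1,0)$. For $p\equiv 2,3\bmod 5$ the order of $p$ in $(\ZZ/5\ZZ)^*$ is $4$, so there is a single nontrivial $\sigma$-orbit $\co$ of size $4$ with $g(\co)=2$, and the formulas of \Cref{Sformulau} give $u=u(\co)=(1/4,3/4)$ and (supersingular) basic Newton polygon $\nu=\sss^4$. Since $u(\co)$ has two slopes, $u\ll_{\text{(C)}} u$ by \Cref{ullu}, and since $\gamma$ has no complementary pair ($a(i)+a(j)\not\equiv 0 \bmod 5$ for $i\neq j$), \Cref{inf-muord} with $t=m=5$ yields a smooth curve with Newton polygon $u^n\oplus\ord^{(m-1)(n-1)}=(1/4,3/4)^n\oplus\ord^{4n-4}$, while \Cref{inf-nu} (with $\epsilon=(n-1)(m-1)=4n-4$) yields one with $u^{n-1}\oplus\nu\oplus\ord^{4n-4}=(1/4,3/4)^{n-1}\oplus\sss^4\oplus\ord^{4n-4}$.

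For the second row, take the genus-$6$ special family with $\gamma$ equivalent to $(10,4,(2,2,1,5))$; by \eqref{DMeqn} its signature is $\cf=(2,1,1,0,0,1,1,0,0)$. For $p\equiv 3,7\bmod 10$ the nontrivial $\sigma$-orbits are $\co_1=\{1,3,7,9\}$, $\co_2=\{2,4,6,8\}$, $\co_3=\{5\}$, with $g(\co_1)=2$, $g(\co_2)=1$, $\cf(\tau_5)=0$; by \Cref{Sformulau}, $u(\co_1)=(1/4,3/4)$, $u(\co_2)=\sss^2$, $\co_3$ is empty, so $u=(1/4,3/4)\oplus\sss^2$, and the only degeneration occurs on $\co_1$, giving basic Newton polygon $\nu=\sss^4\oplus\sss^2=\sss^6$. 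Each $u(\co)$ has at most two slopes, so $u\ll_{\text{(C)}} u$ by \Cref{ullu}; $\gamma$ again has no complementary pair modulo $10$, so \Cref{inf-muord} with $t=m=10$ gives $u^n\oplus\ord^{9(n-1)}=(1/4,3/4)^n\oplus\sss^{2n}\oplus\ord^{9n-9}$, and \Cref{inf-nu} gives $u^{n-1}\oplus\nu\oplus\ord^{9n-9}=(1/4,3/4)^{n-1}\oplus\sss^{2n+4}\oplus\ord^{9n-9}$.

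The only step that is not routine is locating these two base families inside Moonen's list and confirming, via \eqref{DMeqn} and the slope and multiplicity formulas of \Cref{Sformulau} (as tabulated in \cite[Section 6.2]{LMPT2}), that their $\mu$-ordinary and basic Newton polygons are exactly as stated for the indicated congruence classes, together with the orbit-by-orbit check of $u\ll_{\text{(C)}} u$. Once that is in place, everything else --- including the defect arithmetic producing $\ord^{4n-4}$ and $\ord^{9n-9}$ --- is the same bookkeeping as in the proof of \Cref{Cslope13}.
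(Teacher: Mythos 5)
Your proposal is correct and is essentially the paper's own proof: the paper simply applies \Cref{inf-muord} and \Cref{inf-nu} to Moonen's families $M[11]=(5,4,(1,3,3,3))$ and $M[18]=(10,4,(3,5,6,6))$, and your base data $(5,4,(1,1,1,2))$ and $(10,4,(2,2,1,5))$ are equivalent to these (multiply by the units $2$ and $7$ respectively and reorder), so your signature, $\mu$-ordinary/basic Newton polygon, hypothesis~(C), and defect computations reproduce exactly the argument in the text.
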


\begin{proof}
	The proof follows from \Cref{inf-muord,inf-nu} applied to $M[11]$ and $M[18]$.
	\end{proof}

\begin{corollary}\label{corolast}\label{n2vsn1}
	Let $p \equiv 2,3 \bmod 5$. 
	For any $n_1,n_2 \in {\mathbb Z}_{\ge 1}$,
	there exists a smooth curve of genus $g=6n_1+8n_2$ defined over $\overline{\FF}_p$ with Newton polygon
	$(1/4,3/4)^{n_2+1}\oplus \sss^{2n_1} \oplus \ord^{4(n_1+n_2-1)}$.  	
\end{corollary}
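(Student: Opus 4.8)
The plan is to realize these Newton polygons as the output of the double-induction mechanism \Cref{twofamilyinfinite}, applied to the ordered pair consisting of a rigid supersingular family and Moonen's family $M[11]$, both with $m_1 = m_2 = 5$.

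First I would fix the two base monodromy data. Let $\gamma_1 = (5, 3, (1,1,3))$. Since $N_1 = 3$, $Z(\gamma_1)$ is a single point, representing the smooth genus-$2$ curve $y^5 = x(x-1)$, so $Z^\circ(\gamma_1)[u_1]$ is non-empty by \Cref{muordoccur}(1). Formula \eqref{DMeqn} gives the signature $\cf_1 = (1,1,0,0)$; since $p \equiv 2,3 \bmod 5$, the set $\CT - \{\tau_0\}$ is a single $\sigma$-orbit $\co = \{\tau_1,\tau_2,\tau_3,\tau_4\}$ of size $4$, and the slope and multiplicity formulas of \Cref{Sformulau} yield $u_1 = \sss^2$ (this recovers the $N = 3$ computation of \cite{LMPT}). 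Let $\gamma_2 = M[11]$, whose inertia type I would order (and if necessary replace by an equivalent datum) so that its first entry is $2$; then $\cf_2 = (2,1,1,0)$, and for $p \equiv 2,3 \bmod 5$ one has $u_2 = (1/4,3/4)$ by \cite[Section~6.2]{LMPT2} (see also \Cref{Cslope14}), while $Z^\circ(\gamma_2)[u_2]$ is non-empty by \Cref{muordoccur}(2) since $M[11]$ is one of Moonen's twenty special families.

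Next I would verify the hypotheses of \Cref{twofamilyinfinite} for the ordered pair $\gamma_1, \gamma_2$. As $a_1(N_1) = 3$, $a_2(1) = 2$, and $3 + 2 \equiv 0 \bmod 5$, hypothesis (A) holds, with $r := \gcd(5, a_1(N_1)) = 1$. For hypothesis (B): on the single orbit $\co$, the values of $\cf_1$ are $(1,1,0,0)$ and those of $\cf_2$ are $(2,1,1,0)$; comparing the two preorders on $\co$ that $\cf_1$ and $\cf_2$ induce, one checks directly that whenever $\cf_1$ strictly separates two elements of $\co$ then $\cf_2$ separates them weakly, and vice versa, so the pair is balanced.

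Finally I would invoke \Cref{twofamilyinfinite}: for all $n_1, m' \in \ZZ_{\geq 1}$ there is a smooth curve over $\overline{\FF}_p$ with Newton polygon
\[
u_1^{n_1} \oplus u_2^{m'} \oplus \ord^{(n_1 + m' - 2)(m_1 - 1) + (r - 1)} = \sss^{2 n_1} \oplus (1/4, 3/4)^{m'} \oplus \ord^{4(n_1 + m' - 2)}.
\]
Taking $m' = n_2 + 1$ produces the Newton polygon $(1/4,3/4)^{n_2+1} \oplus \sss^{2n_1} \oplus \ord^{4(n_1 + n_2 - 1)}$; its height equals $8(n_2+1) + 4n_1 + 8(n_1 + n_2 - 1) = 12 n_1 + 16 n_2$, so the associated curve has genus $6n_1 + 8n_2$, as asserted. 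The only steps requiring genuine care are the identification of $u_1$ and $u_2$ at the primes $p \equiv 2,3 \bmod 5$ and the verification of hypothesis (B); both are short computations, after which the statement is a formal consequence of \Cref{twofamilyinfinite}.
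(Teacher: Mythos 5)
Your proposal is correct and follows essentially the same route as the paper: the paper also applies \Cref{twofamilyinfinite} with $n_2' = n_2+1$ to the pair $Z_1=Z(5,3,(2,2,1))$ (which is the same family as your $Z(5,3,(1,1,3))$, the inertia types being equivalent under multiplication by $2$) and $Z_2=M[11]$, with $u_1=\sss^2$ and $u_2=(1/4,3/4)$. Your write-up merely supplies the signature computations and the explicit checks of hypotheses (A) and (B) that the paper leaves implicit; note also that hypothesis (C) fails for this pair (as the paper remarks in a footnote), but it is correctly not required for \Cref{twofamilyinfinite}.
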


\begin{proof}
	We apply \Cref{twofamilyinfinite} to $Z_1=Z(5,3,(2,2,1))$ and $Z_2=M[11]$.	
	By \cite[Section 6.2]{LMPT2} and \cite[Section 4]{LMPT},
	if $p \equiv 2,3\bmod 5$, then $u_1=\sss^2$ and $u_2=(1/4,3/4)$.
	\footnote{The pair $Z_1$ and $Z_2$ does not satisfy hypothesis (C) and the codimension condition in \eqref{Ecodimequal} does not hold inductively.}	 
\end{proof}

\begin{example} \label{Eno12}
Let $m$ be prime and $p$ have odd order modulo $m$.
The Newton polygon $\nu_1$ for a $\mu_m$-cover with monodromy datum $\gamma=(m, 3, a)$
has no slopes of $1/2$ by \cite[Section~3.2]{LMPT}.
Applying \Cref{inf-muord} to $Z=Z(\gamma)$ with $t=m$ shows that 
the Newton polygon $\nu_n=\nu_1^n \oplus \ord^{(m-1)(n-1)}$ occurs
for a smooth curve over $\overline{\FF}_p$, for any $n \in {\mathbb Z}_{\geq 1}$. 
\end{example}

Examples of $\gamma$ and $\nu_1$ can be found in \cite[Theorem 5.4]{LMPT}.
For example, when $m=11$, $a=(1,1,9)$ and $p \equiv 3,4,5,9 \bmod 11$, then $\nu_1=(1/5,4/5)$.
As another example, let $m=29$, $a=(1,1,27)$, and $p \equiv 7,16,20,23,24,25 \bmod 29$, 
then $\nu_1=(2/7,5/7)\oplus (3/7,4/7)$, 
yielding another infinite family that cannot be studied using \cite[Theorem~6.1]{Bouwprank}.

\section{Appendix: Newton polygons for Moonen's families}\label{TABLE}

For convenience, we provide the full list of Newton polygons on Moonen's special families 
from \cite[Section 6]{LMPT2}.  These occur for a smooth curve in the family by 
\Cref{MSS}.
The label $M[r]$ is from \cite[Table 1]{moonen}.
The notation $^\dagger$ means {\bf we further assume $p \gg 0$}.

\newpage
\begin{small}
	\begin{center}
		\begin{tabular}{  |c|c|c|c|  }
			\hline
			Label & m& a, $\cf$ & Newton Polygon [congruence on $p$]\\
			\hline
			$M[1]$& 2 & (1,1,1,1), $\cf= (1)$ & $\ord$, $\sss$ $[1 \bmod 2]$ \\
			\hline
			$M[2]$& 2 & (1,1,1,1,1,1), $\cf= (2)$ & $\ord^2$, $\ord \oplus \sss$, $\sss^2$ $[1 \bmod 2]$ \\
			\hline
			$M[3]$& 3 & (1,1,2,2), $\cf= (1,1)$  & $\ord^2$ $[1,2 \bmod 3] $, $\sss^2$ $[1 \bmod 3] $, $\sss^2$ $[2 \bmod 3]^\dagger $\\
			\hline
			$M[4]$ &4 & (1,2,2,3)  &$\ord^2$ $[1,3 \bmod 4] $\\
			&& $\cf= (1,0,1)$ & $\sss^2$ $[1 \bmod 4]$, $\sss^2$ $[3 \bmod 4]^\dagger $\\
			\hline
			$M[5]$ & 6& (2,3,3,4)  & $\ord^2$ $[1,5 \bmod 6] $\\
			&& $\cf= (1,0,0,0,1)$ & $\sss^2$ $[1 \bmod 6]$, $\sss^2$ $[5 \bmod 6]^\dagger $\\
			\hline
			$M[6]$ & 3 & (1,1,1,1,2) & $ \ord^3$ $[1 \bmod 3] $, $\ord^2 \oplus \sss$ $[2 \bmod 3] $\\
			&& $\cf= (2,1)$ & $\ord \oplus \sss^2$, $(1/3,2/3)$ $[1 \bmod 3] $, $\sss^3$ $[2 \bmod 3]^\dagger $\\
			\hline
			$M[7]$ & 4 & (1,1,1,1) & $\ord^3$ $[1 \bmod 4] $, $\ord \oplus \sss^2$ $[3 \bmod 4] $ \\
			&& $\cf= (2,1,0)$ & $\ord^2 \oplus \sss$ $[1 \bmod 4] $, $\sss^3$ $[3 \bmod 4]^\dagger $ \\
			\hline
			$M[8]$ & 4 & (1,1,2,2,2) & $ \ord^3$ $[1 \bmod 4] $, $\ord^2 \oplus \sss$ $[3 \bmod 4] $\\
			&& $\cf= (2,0,1)$ & $\ord \oplus \sss^2$, $(1/3,2/3)$ $[1 \bmod 4] $, $\sss^3$ $[3 \bmod 4]^\dagger $\\
			\hline
			$M[9]$ & 6 &(1,3,4,4)& $\ord^3$ $[1 \bmod 6] $, $\ord^2 \oplus \sss$ $[5 \bmod 6] $\\
			&& $\cf= (1,1,0,0,1)$ & $\ord \oplus \sss^2$ $[1 \bmod 6] $, $\sss^3$ $[5 \bmod 6]^\dagger $\\
			\hline
			$M[10]$& 3 & (1,1,1,1,1,1)& $ \ord^4$ $[1 \bmod 3] $, $\ord^2 \oplus \sss^2$ $[2 \bmod 3] $\\
			&& $\cf= (3,1)$ & $\ord^2 \oplus \sss^2$ $[1 \bmod 3] $, $(1/4,3/4)$ $[2 \bmod 3] $\\
			&&& $\ord \oplus (1/3,2/3)$ $[1 \bmod 3] $, $\sss^4$ $[2 \bmod 3]^\dagger $\\
			&&& $(1/4,3/4)$ $[1 \bmod 3] $\\
			\hline
			$M[11]$& 5 & (1,3,3,3) &  $\ord^4$ $[1 \bmod 5] $, $(1/4,3/4)$ $[2,3 \bmod 5] $, $\ord^2 \oplus \sss^2$ $[4 \bmod 5] $\\ 
			&& $\cf= (1,2,0,1)$ &$\ord^2 \oplus \sss^2$ $[1 \bmod 5] $, $\sss^4$ $[2,3,4 \bmod 5]^\dagger $\\
			\hline
			$M[12]$ &6& (1,1,1,3) & $\ord^4$ $[1 \bmod 6] $, $\ord \oplus \sss^3$ $[5 \bmod 6] $\\ 
			& & $\cf= (2,1,1,0,0)$ &  $\ord^3 \oplus \sss$ $[1 \bmod 6] $, $\sss^4$ $[5 \bmod 6]^\dagger $\\
			\hline
			$M[13]$ & 6&(1,1,2,2) & $\ord^4$ $[1 \bmod 6] $, $\ord^2 \oplus \sss^2$ $[5 \bmod 6] $\\
			&& $\cf= (2,1,0,1,0)$ & $ \ord^2 \oplus \sss^2$ $[1 \bmod 6] $, $\sss^4$ $[5 \bmod 6]^\dagger $\\
			\hline
			$M[14]$ & 6& (2,2,2,3,3) & $ \ord^4$ $[1 \bmod 6] $, $ \ord^2 \oplus \sss^2$ $[5 \bmod 6] $\\
			&& $\cf= (2,0,0,1,1)$ & $ \ord^2 \oplus \sss^2$ $[1 \bmod 6] $, $ \sss^4$ $[5 \bmod 6]^\dagger $\\
			&&& $\ord \oplus (1/3,2/3)$ $[1 \bmod 6] $\\
			\hline
			$M[15]$& 8 & (2,4,5,5), $\cf=$ & $\ord^5$ $[1 \bmod 8] $, $\ord^2 \oplus \sss^3$ $[3,7 \bmod 8] $, $\ord^3 \oplus \sss^2$ $[5 \bmod 8] $\\  
			&& $(1,1,0,0,2,0,1) $ &  $\ord^3 \oplus \sss^2$ $[1 \bmod 8]$, $(1/4,3/4) \oplus \sss$ $[3 \bmod 8]$\\
			&&  & $\ord \oplus (1/4,3/4)$ $[5 \bmod 8] $, $\sss^5$ $[7 \bmod 8]^\dagger $\\
			\hline
			$M[16]$ & 5 & (2,2,2,2,2) & $ \ord^6$ $[1 \bmod 5] $, $ (1/4,3/4) \oplus \sss^2$ $[2,3 \bmod 5] $, $ \ord^2 \oplus \sss^4$ $[4 \bmod 5] $\\
			&& $\cf= (2,0,3,1)$ & $ \ord^4 \oplus \sss^2$ $[1 \bmod 5]$, $ \sss^6$ $[2,3,4 \bmod 5]^\dagger $\\
			&&& $\ord^3 \oplus (1/3,2/3)$ $[1 \bmod 5]$\\
			\hline
			$M[17]$ & 7 & (2,4,4,4), $\cf=$ & $\ord^6$ $[1 \bmod 7] $, $\ord^3 \oplus (1/3,2/3)$ $[2,4 \bmod 7] $\\ 
			& &  $(1,2,0,2,0,1)$ & $(1/3,2/3)^2$ $[3,5 \bmod 7] $, $\ord^2 \oplus \sss^4$ $[6 \bmod 7] $ \\ 
			&& &  $\ord^4 \oplus \sss^2$ $[1 \bmod 7]$, $(1/6,5/6)$ $[2,4 \bmod 7] $, $\sss^6$ $[3,5,6 \bmod 7]^\dagger $\\
			\hline
			$M[18]$ & 10 &(3,5,6,6), $\cf=$& $\ord^6$ $[1 \bmod 10] $, $(1/4,3/4) \oplus \sss^2$ $[3,7 \bmod 10] $, $\ord^2 \oplus \sss^4$ $[9 \bmod 10] $\\  
			&& $(1,1,0,1,0,0,2,0,1)$ &  $\ord^4 \oplus \sss^2$ $[1 \bmod 10] $, $\sss^6$ $[3,7,9 \bmod 10]^\dagger $\\
			\hline
			$M[19]$ & 9 & (3,5,5,5),  $\cf=$& $\ord^7$ $[1 \bmod 9] $, $(1/3,2/3)^2 \oplus \sss$ $[2,5 \bmod 9] $\\ 
			&& $(1,2,0,2,0,1,0,1)$  &$\ord \oplus (1/3,2/3)^2$ $[4,7 \bmod 9] $, $\ord^2 \oplus \sss^5$ $[8 \bmod 9] $\\
			&& &  $\ord^5 \oplus \sss^2$ $[1 \bmod 9]$, $\sss^7$ $[2,5,8 \bmod 9]^\dagger$, $\ord \oplus \sss^6$ $[4,7 \bmod 9] $\\
			\hline
			$M[20]$ & 12 & (4,6,7,7), $\cf=$& $\ord^7$ $[1 \bmod 12] $, $\ord^3 \oplus \sss^4$ $[5 \bmod 12] $\\
			&& $(1,1,0,1,0,0, $ & $\ord^4 \oplus \sss^3$ $[7 \bmod 12] $, $\ord^2 \oplus \sss^5$ $[11 \bmod 12] $\\
			&& $2,0,1,0,1) $ &  $\ord^5 \oplus \sss^2$ $[1 \bmod 12] $, $\ord \oplus (1/4,3/4) \oplus \sss^2$ $[5 \bmod 12] $\\
			&&&$\ord^2 \oplus \sss^5$ $[7 \bmod 12] $, $\sss^7$ $[11 \bmod 12]^\dagger $\\
			\hline
		\end{tabular}
	\end{center}
\end{small}

\bibliographystyle{amsplain}
\bibliography{npthirdbib}

\end{document}